\newtheorem{theorem}{\indent Theorem}[section]
\newtheorem{proposition}[theorem]{\indent Proposition}
\newtheorem{definition}[theorem]{\indent Definition}
\newtheorem{lemma}[theorem]{\indent Lemma}
\newtheorem{example}{Example}[section]
\begin{document}

\title{\bf Uniform measure attractors of the distribution-dependent 2D stochastic Navier-Stokes equations driven by nonlinear noise}

\author{{Jiangwei Zhang$^\text{a}$, \quad  Juntao Wu$^\text{b}\footnote{Corresponding author.}$
	}\\
	{ \small\textsl{$^\text{a}$ Institute of Applied Physics and Computational Mathematics, }}\\
	{ \small \textsl{Beijing 100088,  P.R. China}}\\
	{ \small\textsl{$^\text{b}$ School of Mathematics and Statistics, Wuhan University, }}\\ 
	{ \small \textsl{Wuhan, Hubei 430072, P.R. China}}
}
\footnotetext{
	\emph{E-mail addresses}: jwzhang0202@yeah.net (J. Zhang), wu\_jun\_tao@163.com (J. Wu).
}
\date{}


\renewcommand{\theequation}{\arabic{section}.\arabic{equation}}
\numberwithin{equation}{section}

\maketitle
 
\begin{abstract} In this paper, we investigate the uniform measure attractors of the distribution-dependent nonautonomous 2D stochastic Navier-Stokes equations driven by nonlinear noise and subject to almost periodic external forcing. Owing to the distribution-dependent structure and the almost periodicity of the external forcing, the resulting solution process becomes an inhomogeneous Markov process, presenting significant analytical challenges.  To overcome these difficulties, we propose sufficient conditions on the time-dependent external forcing and distribution-dependent nonlinear terms, and develop novel analytical estimates. As a result, we establish the existence and uniqueness of uniform measure attractors for the system. Notably, the joint continuity of the family of processes is achieved without relying on the Feller property of the distribution law operators.

\medskip

\noindent \textbf{Keywords:} Distribution-dependent Navier-Stokes equation; Nonlinear noise;  Nonautonomous stochastic system; Uniform measure attractor; Feller property.
\end{abstract}

{\hspace*{2mm}  AMS Subject Classification: 35B40, 35B41, 37B55, 37L55, 37L30.}

\section{Introduction}
In this paper, we are concerned with the asymptotic behavior of solutions for the following distribution-dependent nonautonomous 2D stochastic Navier-Stokes equations with almost periodic external forcing and nonlinear noise:
\begin{align}\label{NSE-dis1.1}
	\left\{
	\begin{aligned}
		&du(t)-\nu\Delta u(t)dt+(u(t)\cdot \nabla )u(t)dt+\nabla pdt
		=g(t,x)dt\\
		&\qquad +f\left(x, u (t), \mathscr{L}_{u(t)} \right) dt+\varepsilon\sum_{k=1}^{\infty}
		\left(h(t,x)+\kappa(x) \sigma_{k}\left(t,u (t), \mathscr{L}_{u (t)}\right)\right) dW_k(t),\quad t>\tau,\\
		&{\rm div} \, u=0,\quad
		\text{ on }\,\, \mathcal{O}\times(\tau,\infty),
	\end{aligned}
	\right.
\end{align}
with the initial-boundary conditions
\begin{align}\label{NSE-dis1.2}
	 u(t,x)|_{(\tau,\infty)\times\partial \mathcal{O}}=0, \quad u(\tau,x)=u_\tau(x),\quad  x\in\mathcal{O},
\end{align}
where $\mathcal{O}\subset\mathbb{R}^2$ is an open bounded domain with smooth boundary $\partial \mathcal{O}$,  $u$ and $p$ denote the velocity field and pressure of fluid, $\mathscr{L}_{u(t)}$ represents the probability distribution of ${u(t)}$;  $\nu>0$ is  the
viscosity constant, $\varepsilon\in (0,1]$ denotes the intensity of noise, $\kappa:=\kappa(x)\in W^{1,\infty}(\mathcal{O})$, the time-dependent external forcing terms $g(t) = g(t,x)$ and $h(t) = h(t,x)$ are almost periodic in time $t$. Moreover,
$f$ and $\sigma_{k}$ are nonlinear functions which will be given later. $\{W_k\}_{k \in \mathbb{N}}$ is a sequence of independent two-sided real-valued Wiener processes defined on a complete filtered probability space $(\Omega, \mathscr{F}, \{\mathscr{F}_t\}_{t \in \mathbb{R}}, \mathbb{P})$ satisfying the usual condition.

The McKean-Vlasov stochastic differential equations (MVSDEs) constitutes a mean-field model characterizing the weak convergence limits of large-scale interacting particle systems. It has been extensively applied in numerous fields such as aerospace engineering, plasma physics and statistical mechanics, providing a rigorous mathematical framework that bridges microscopic stochastic dynamics and emergent macroscopic behavior, see \cite{Leray1934,Bensoussan-1973,Chow-1978} and the references therein. 
In recent years, numerous scholars investigated the well-posedness and dynamical behavior of the solutions of MVSDEs, see e.g., \cite{	McKean-1966,Vlasov-1968,Dawson-1995,Sznitman-1991,Prato2009,Carmona-2018,Rockner-2021,WangFY-2023,Fan-2022,WangFy-2018, H1}.
We notice that a fundamental property of MVSDEs lies in that both the drift and diffusion coefficients depend not only on the system state $u(t)$ but also on its probability distribution $\mathscr{L}_{u(t)}$. This structure has motivated the development of distribution-dependent stochastic partial differential equations, including the stochastic abstract fluid equations with mean-field interactions \cite{CQT-2025}.
In this work, we devote to studying the uniform measure attractors for the distribution-dependent 2D stochastic Navier-Stokes equations \eqref{NSE-dis1.1}, which emerges as the mean-field limit of M-interacting hydrodynamic flows. The model incorporates dependence on the law of the solution, extending classical fluid models to settings with non-local interactions and measure-valued nonlinearities.

When the nonlinear terms $f$ and $\sigma_k$ in \eqref{NSE-dis1.1} are independent of the law $\mathscr{L}_{u(t)}$, the equations \eqref{NSE-dis1.1} reduce to the classical stochastic Navier-Stokes equations with nonlinear noise. In general, for the systems with linear or additive noise, the asymptotic behavior of solutions can be characterized by pathwise pullback random attractors \cite{ra,glatt,Prato-2008,Crauel-1994,Prato-2002,Temam1997}. However, for the case of nonlinear noise, it becomes necessary to employ the framework of measure attractors. For detailed theoretical foundations and applications of measure attractors, we refer to \cite{Schmalfuss-1991,cap,morimoto1992,Cutland-1998,schmalfuss1999,crauel2008,schmalfuss1997}.  

More recently, the authors of \cite{LDS-JDE-2024} introduced the concept of pullback measure attractors for nonautonomous dynamical systems and applied this abstract framework to reaction-diffusion equations subject to deterministic nonautonomous forcing on thin domains. Following this development, numerous studies have extended the analysis of pullback measure attractors to various types of stochastic differential equations, as evidenced in \cite{LZWH-BMMSS,LML-QTDS-2024,MLZ-AMO-2024,MLZ-Procd-2024}. It should be noted, however, that the solution processes in these studies are homogeneous Markov processes. If the deterministic external forcing is almost periodic, then the corresponding solution process becomes inhomogeneous Markov process. To address this scenario, the authors of \cite{li2024b, LZH-AML} proposed the concept of uniform measure attractors and applied it to stochastic (tamed) Navier-Stokes equations.
Nevertheless, all the aforementioned works are restricted to stochastic partial differential equations that do not depend on the distribution of the solutions.


For the distribution-dependent stochastic systems, the authors of  \cite{SSLW-2024,SSLd-2024} established the existence and uniqueness of pullback measure attractors for the McKean-Vlasov stochastic reaction diffusion equations and the McKean-Vlasov  stochastic delay lattice systems. Moreover, the first author and collaborators investigated the well-posedness and pullback measure attractors for the distribution-dependent nonautonomous 2D stochastic Navier-Stokes equations \eqref{NSE-dis1.1} with deterministic external forcing.
However, to the best of our knowledge, no results are currently available for the distribution-dependent 2D stochastic Navier-Stokes equations with  time-dependent almost periodic external forcing, such as those described by systems \eqref{NSE-dis1.1}. 
More precisely, 
in this work, we aim to study the existence and uniqueness of uniform measure attractors for such systems.

To outline the main challenges addressed in this study, we denote by $P^{(g,h)}(\tau, t)$ the transition operator associated with the solution of \eqref{NSE-dis1.1}, and let $P_*^{(g,h)}(\tau, t)\mu$ represent the law of the solution
of \eqref{NSE-dis1.1} with initial law $\mu\in \mathcal{P}_4(H)$ at initial time $\tau$, where the definitions of $H$ and $\mathcal{P}_4(H)$ can be found in Sections \ref{NS-Sec2} and \ref{NS-Sec3}, respectively. For the distribution-dependent nonautonomous 2D stochastic Navier-Stokes equations with almost periodic external forcing \eqref{NSE-dis1.1}, we know that $P_*^{(g,h)}(\tau, t) $ is not the dual operator of $P^{(g,h)}(\tau, t) $. Specifically, the duality relation
\begin{align*}
	\int_{H} P_{\tau, t}\phi (x) d\mu (x)
	\neq   \int_{H}\phi (x)  d (P^*_{\tau, t}\mu) (x)
\end{align*} 
fails to hold for any $\mu\in {\mathcal{P}}_4 (H)$ and bounded Borel functions $\phi:  H \to \mathbb{R}$, because the dual operator does not form a flow (cf. \cite{WangFy-2018}). This is the main reason why we employ law operators instead of dual operators for distribution-dependent equations. The failure of this duality introduces significant technical obstacles. To establish the existence of uniform measure attractors, it is necessary to demonstrate the weak continuity of the family of processes $\{P_*^{(g,h)}(\tau, t)\}_{\tau\leq t}$ on $(\mathcal {P}_4 (X), d_{\mathcal{P}(X)})$, which in turn ensures the joint continuity of the family of processes. In the distribution-independent setting, the continuity of $\{P_*^{(g,h)}(\tau, t)\}_{\tau\leq t}$ follows from the Feller property of $\{P^{(g,h)}(\tau, t)\}_{\tau\leq t}$, due to the duality between these operators. However, this argument no longer holds in the distribution-independent case described by \eqref{NSE-dis1.1}, necessitating alternative approaches. In particular, the long-time uniform estimates in $L^2(\Omega,V)$ for the solution of \eqref{NSE-dis1.1} cannot be directly obtained.

To establish the existence and uniqueness of uniform measure attractors for the distribution-dependent nonautonomous 2D stochastic Navier-Stokes equations \eqref{NSE-dis1.1} with almost periodic external forcing and nonlinear noise, we propose the following strategy:

$(i)$ The joint continuity of the family of processes $\{P_*^{(g,h)}(\tau, t)\}_{\tau\leq t}$ will be established on the subspace $(\mathbb{B}_{\mathcal{P}_{4}(X)}(r),d_{\mathcal{P}(H)})$, rather than on the entire space $(\mathcal{P}_4(H),d_{\mathcal{P}(H)})$. 

$(ii)$ The asymptotic compactness of the family $\{P_*^{(g,h)}{(\tau, t)}\}_{\tau\leq t}$ shall be verified by deriving the long-time uniform estimates of solutions on $L^2(\Omega,V)$ with the appropriate weight.

The structure of this paper is as follows. Section \ref{NS-Sec2} introduces the necessary notations and abstract concepts pertaining to spaces of probability measures and uniform measure attractors. In Section \ref{NS-Sec3}, we establish sufficient conditions for the existence of uniform measure attractors for \eqref{NSE-dis1.1} and present corresponding well-posedness results under these conditions. Section \ref{NS-Sec4} is devoted to proving the long-time uniform estimates of the solution, which are essential for establishing the existence and uniqueness of uniform measure attractors for \eqref{NSE-dis1.1}. Finally, in Section \ref{ex-pull-NS}, we demonstrate the existence and uniqueness of uniform measure attractors.

\section{Preliminaries}\label{NS-Sec2}
In this section, we introduce some basic theories of uniform measure attractors for a family of processes acting on the space of probability measures over a Banach space. Additionally, the structure of uniform measure attractors is given.

\subsection{Basic probability measure spaces}
Let $X$ be a separable Banach space with norm $\|\cdot\|_X$. Denote by $C_b(X)$ the space of bounded continuous functions $\phi:X\rightarrow \mathbb{R}$ equipped with the supremum norm 
$
\|\phi\|_{C_b}=\sup_{x\in X}|\phi(x)|.
$
 Let $L_b(X)$ denote the space of bounded Lipschitz functions on $X$, consisting of all functions $\phi\in C_b(X)$ such that
\begin{align*}
	\|\phi\|_{\text{Lip}} = \sup\limits_{x_{1},x_{2} \in X,x_{1} \neq x_{2}}\ \frac{\left| \phi\left( x_{1} \right) - \phi\left( x_{2} \right) \right|}{{\| x_{1} - x_{2}\|}_{X}} < \infty.
\end{align*}
The space $L_b(X)$ is endowed with the norm
$$
\|\phi\|_{L_b}=\|\phi\|_{C_b}+\|\phi\|_{\text{Lip}} .
$$

Let $\mathcal{P}(X)$ be the space of probability measures on $\left(X,\mathcal{B}(X)\right)$, here $\mathcal{B}(X)$ represents the Borel $\sigma$-algebra of $X$. For given $\phi\in C_b(X)$ and $\mu\in \mathcal{P}(X)$, we write
$$
(\phi,\mu)=\int_{X} \phi(x)\mu(dx).
$$
Define a metric of $\mathcal{P}(X)$ by
$$
d_{\mathcal{P}(X)}\left( \mu_{1},\mu_{2} \right) = \sup\limits_{{\phi \in L_{b}{(X)}},\,{\|\phi\|_{L_b} \leq 1}}\ \left| \left( \phi,\mu_{1} \right) - \left( \phi,\mu_{2} \right) \right|,\quad\forall \mu_{1},\mu_{2} \in \mathcal{P}(X).
$$
It follows that $\left(\mathcal{P}(X),d_{\mathcal{P}(X)}\right)$ is a Polish space. Furthermore, a sequence  $\{\mu_n\}_{n=1}^\infty\subset \mathcal{P}(X)$ is weakly convergent to $\mu \in \mathcal{P}(X)$, if for every $\phi\in C_b(X)$, it holds
$
\lim_{n\rightarrow \infty} \left(\phi,\mu_n\right)=\left(\phi,\mu\right).
$

For every $p\geq 1$, the $p$-Wasserstein space $\mathcal P_p \left( X \right)$ on $X$ is defined as
$$
\mathcal P_p \left( X \right) = \left\{ {\mu  \in \mathcal
	P\left( X \right):
	\int_X {\|x\|_X^p \mu \left( {dx} \right) <    \infty } } \right\},
$$
and $p$-Wasserstein distance $\mathbb{W}_p$ is given by
$$
\mathbb{W}_p ( \mu , \nu ) =
\inf\limits_{ \pi \in \mathscr{C} ( \mu, \nu ) }
\Big (
\int_{ X \times X}
\| x-y  \|_X^p \pi (dx, dy)
\Big )^{ \frac{1}{p} },\quad
\forall \mu, \nu \in \mathcal{P}_p ( X ),
$$
where $ \mathscr{C} ( \mu, \nu ) $ is the  set of all couplings of $\mu$ and $\nu$. Then, $(\mathcal P_p(X), \mathbb{W}_p )$ is a
Polish space.

Given $r>0$, we define the ball $\mathbb{B}_{\mathcal{P}_{p}(X)}(r)$ by
\begin{align*}
	\mathbb{B}_{\mathcal{P}_{p}(X)}(r) = \left\{ \mu \in \mathcal{P}_{p}(X):\left( \int_X  \|x\|_{X}^{p}\mu(dx) \right)^{\frac{1}{p}} \leq r \right\}.
\end{align*}
A  subset $\mathfrak{B}\subseteq {\mathcal P}_p \left( X \right)$ is
bounded  if there exists $r>0$ such that
$\mathfrak{B}\subseteq \mathbb{B}_{\mathcal{P}_{p}(X)}(r)$. If   $\mathfrak{B}$ is bounded in   $ {\mathcal P}_p ( X )$,
then we set
$$
\|\mathfrak{B}\|_{{\mathcal P}_p ( X )}
=
\sup_{
	\mu \in \mathfrak{B}
} \left(\int_X \|x\|_X^p \mu
( {dx}  )\right)^{\frac{1}{p}}.
$$

Note that 
$(\mathcal {P}_p (X), d_{\mathcal{P}(X)})$ is not complete. For every $r>0$, since $\mathbb{B}_ {\mathcal P_p(X)} (r)$ is a closed subset of  $\mathcal{P}(X)$ with respect to the metric  $d_{\mathcal{P}(X)}$, we know that the space
$(\mathbb{B}_ {\mathcal P_p(X)} (r), \ d_{\mathcal{P}(X)})$
is complete. In addition, the Hausdorff semi-distance between two nonempty subsets $Y$ and $Z$  of $\mathcal{P}_p(X)$ is defined by
\[
d_{\mathcal{P}_p(X)}(Y, Z) = \sup_{y \in Y} \inf_{z \in Z} d_{\mathcal{P}(X)}(y, z), \quad  \forall\, Y, Z \subseteq \mathcal{P}_p(X).
\]

\subsection{Abstract theory of uniform measure attractors}
 In this subsection, we introduce the family of processes with skew product semi-flow. Assume that \( g_0(t) \) and \( h_0(t) \) are almost periodic functions in \( t \in \mathbb{R} \) with values in \( X \). We further define \( C_b(\mathbb{R}, X) \) as the space of bounded continuous functions on \( \mathbb{R} \) with norm
\[
\|\varphi\|_{C_b(\mathbb{R}, X)} = \sup_{t \in \mathbb{R}} \|\varphi(t)\|_X, \text{\, for \, } \varphi\in C_b(\mathbb{R}, X).
\]
Since the almost periodic function is
bounded and uniformly continuous on $\mathbb{R}$, it follows that $g_0,h_0 \in C_b(\mathbb{R}, X)$.
By Bochner's criterion (see \cite{Levitan-1983}), whenever 
$g_0,h_0:\mathbb{R}\rightarrow X$ are almost periodic, then the sets of all translations 
\begin{equation*}
\{g_0(\cdot + s) : s \in \mathbb{R}\} \text{~~and~~}	\{h_0(\cdot + s) : s \in \mathbb{R}\} 
\end{equation*}
are precompact in \( C_b(\mathbb{R}, X) \) . 
Let \( \mathcal{H}(g_0) \) and \( \mathcal{H}(h_0) \) denote the closures of these translation sets in \( C_b(\mathbb{R}, X) \), respectively. Then, for any $g\in \mathcal{H}(g_0)$ and $h\in \mathcal{H}(h_0)$, $g$ and $h$ are almost periodic. In particular, we have $\mathcal{H}(g)=\mathcal{H}(g_0)$ and $\mathcal{H}(h)=\mathcal{H}(h_0)$.

For simplicity, we assume throughout this paper that  $g$ and $h$ are the almost periodic  functions in \( C_b(\mathbb{R}, X)\). The results presented here continue to hold for any forcing functions $g$ and $h$ whose hull \( \mathcal{H}(g_0) \times \mathcal{H}(h_0) \) is compact in \( C_b(\mathbb{R}, X) \times C_b(\mathbb{R}, X) \). For notational convenience, let  $\Sigma=\mathcal{H}(g_0) \times \mathcal{H}(h_0)$.

\begin{definition}\label{Def1}
	A family $U^{(g,h)}=\{U^{(g,h)}(t,\tau):t\geq \tau,\tau\in\mathbb{R}\}$ of mappings from $\mathcal {P}_p (X)$ to $\mathcal {P}_p (X)$ is called a process on $\mathcal {P}_p (X)$ with time symbol $(g,h)\in\Sigma$, if for all $\tau\in\mathbb{R}$ and $t\geq s\geq\tau$, the following conditions are satisfied:
	\begin{itemize}
		\item \( U^{(g,h)}(\tau, \tau) = I_{\mathcal{P}_p(X)} \), for $\tau \in \mathbb{R}$, here $I_{\mathcal{P}_p(X)}$ denotes the identity operator on \(\mathcal {P}_p (X)\);
		\item \( U^{(g,h)}(t, \tau) = U^{(g,h)}(t, s) \circ U^{(g,h)}(s, \tau) \) for \( t \geq s \geq \tau \in \mathbb{R}\).

	\item The family \( \{U^{(g,h)}(t, \tau)\}_{(g,h) \in \Sigma} \) is said to be jointly continuous if it is continuous in both \( \mathcal {P}_p (X) \) and \( \Sigma \).	\end{itemize}
\end{definition}

For any $s\in\mathbb{R}$, we denote the translation operator (or group) $T(s)$ on $\Sigma$ by
$$
T(s)(g,h)=(g(\cdot+s),h(\cdot+s)), \,\,\, \forall  (g,h)\in\Sigma.
$$ 
Obviously, the translation group $\{T(s)\}_{s\in\mathbb{R}}$ forms a continuous translation group on $\Sigma$ that leaves $\Sigma$ invariant:
$$T(s)\Sigma=\Sigma,\,\,\, \forall s\in\mathbb{R}.$$

Furthermore, for the processes $\left\{U^{(g,h)}(t,\tau)\right\}_{(g,h)\in\Sigma}$
and the translation group $\{T(s)\}_{s\in\mathbb{R}}$, we assume that they satisfies the following translation identity:
\[
U^{(g, h)}(t + s, \tau + s) = U^{T(s)(g, h)}(t, \tau), \,\,\, \forall s \in \mathbb{R}, \, t \geq \tau \text{ and } \tau \in \mathbb{R}.
\]

\begin{definition}
	A closed set $B\subset\mathcal{P}_p\left(X\right)$ is called a uniform absorbing set of the family of processes $\{U^{(g,h)}(t,\tau)\}_{(g,h)\in\Sigma}$ with respect to $(g,h)\in\Sigma$ if for any bounded set $\mathfrak{D}$ of $\mathcal{P}_p(X)$, there exists $T=T(\mathfrak{D},g_{0},h_{0})>0$ such that
	$$U^{(g,h)}\left(t,0\right)\mathfrak{D}\subseteq B,\quad\mathrm{for~all~}(g,h)\in\Sigma\mathrm{~and~}t\geq T.$$
\end{definition}

\begin{definition}
	The family of processes $\{U^{(g,h)}(t,\tau)\}_{(g,h)\in\Sigma}$ is called uniformly asymptotically compact in $\mathcal {P}_p (X)$ with respect to $\left(g,h\right)\in\Sigma$ if $\left\{U^{(g_n,h_n)}\left(t_n,0\right)\mu_n\right\}_{n=1}^\infty$ has a convergent subsequence in $\mathcal {P}_p (X)$ whenever $t_n\to+\infty$ and $\left(\mu_n,(g_n,h_n)\right)$ is bounded in $\mathcal{P}_p\left(X\right)\times\Sigma.$
\end{definition}

\begin{definition}
	A set $\mathscr{A}$ of $\mathcal {P}_p (X)$ is said to be a uniform measure attractor of the family of processes $\{U^{(g,h)}(t,\tau)\}_{(g,h)\in\Sigma}$ with respect to $(g,h)\in\Sigma$ if
	\begin{itemize}
		\item[$(i)$] \( \mathscr{A} \) is compact in \( \mathcal {P}_p (X) \);
		\item[$(ii)$] $\mathscr{A}$ is uniformly quasi-invariant, that is, for each $t\geq \tau\in \mathbb{R}$, one has 
		$$
		\mathscr{A}\subseteq \bigcup_{(g,h)\in\Sigma}{U^{(g,h)}(t,\tau)}\mathscr{A};
		$$
		\item[$(iii)$] $\mathscr{A}$ attracts every bounded set in $\mathcal {P}_p (X)$ uniformly with respect to $(g,h)\in\Sigma$,
		that is, for any bounded set $\mathfrak{D}$ of $\mathcal {P}_p (X)$,
		$$\lim\limits_{t\to\infty}\sup\limits_{(g,h)\in\Sigma}d_{\mathcal{P}(X)}\left(U^{(g,h)}\left(t,\tau\right)\mathfrak{D},\mathscr{A}\right)=0,\quad\text{for all}\,\,
		\tau\in\mathbb{R};
		$$
		\item[$(iv)$] $\mathscr{A}$ is minimal among all compact subsets of $\mathcal {P}_p (X)$ satisfying the property $(iii)$; that is, if $\mathscr{C}$ is
		any compact subset of $\mathcal {P}_p (X)$ satisfying the second property, then $\mathscr{A}\subseteq\mathscr{C}.$
	\end{itemize}
\end{definition}

\begin{definition}
	Given $(g,h)\in\Sigma$, a mapping $\Xi:\mathbb{R}\to\mathcal{P}_p(X)$ is called a complete
	solution of $U^{(g,h)}(t,\tau)$ if for every $t\in\mathbb{R}^+$ and $\tau\in\mathbb{R}$, the following relationship holds
	$$U^{(g,h)}\left(t,\tau\right)\Xi\left(\tau\right)=\Xi\left(t\right).$$
	
\end{definition}
The kernel of the process $U^{(g,h)}(t,\tau)$ is the collection $\mathcal{K}_{(g,h)}$ of all its bounded complete solutions.
The kernel section of the process $U^{(g,h)}(t,\tau)$ at time $s\in\mathbb{R}$ is given by
$$\mathcal{K}_{(g,h)}(s)=\left\{\Xi\left(s\right):\:\Xi(\cdot)\in\mathcal{K}_{(g,h)}\right\}.$$

If the family of processes $\{U^{(g,h)}(t,\tau)\}_{(g,h)\in\Sigma}$ admits a uniform measure attractor, then such an attractor has to be unique. To establish the existence of such a uniform measure attractor, we lift the family of processes to a semigroup $\{S(t)\}_{t\geq0}$ on $\mathcal{P}_p\left(X\right)\times\Sigma$ by 
$$
S(t)\left(\mu,\left(g,h\right)\right)=\left(U^{\left(g,h\right)}\left(t,0\right)\mu,T(t)\left(g,h\right)\right),\,\,\text{for every } t\geq 0, \mu \in \mathcal{P}_p\left(X\right), (g,h)\in \Sigma,
$$
which, along with Definition \ref{Def1}, can know that $S(0)=\mathrm{I}_{\mathcal{P}_{\mathrm{p}}(\mathrm{X})\times \Sigma}$ and $S(t)S(s)=S\left(t+s\right)$ for any $t\geq s\geq0$. 
Indeed, according to \cite{Chepyzhov-1994}, if the semigroup $\{S(t)\}_{t\geq0}$  possesses a global attractor in 
$\mathcal{P}_p\left(X\right)\times\Sigma$, then the family of processes $\{U^{(g,h)}(t,\tau)\}_{(g,h)\in\Sigma}$ admits a uniform measure attractor in $\mathcal{P}_p\left(X\right)$. Moreover, this attractor coincides with the projection onto $\mathcal{P}_p\left(X\right)$ of the global attractor of $\{S(t)\}_{t\geq0}$.

Building upon the theory of uniform and global attractors developed in \cite{Chepyzhov-1994,Hale-1987,Temam1997}, we state the following key abstract results, which can be also found in 
 \cite{li2024b,LZH-AML}.

\begin{theorem}\label{theo-1}
	If the semigroup $\{S(t)\}_{t\geq0}$ is continuous, point dissipative and asymptotically compact, then it has a global attractor $\mathscr{A}_{S}$ in $\mathcal{P} _p( X) \times \Sigma .$ Furthermore, if $\mathscr{A}$ is the projection of $\mathscr{A}_S$ onto $\mathcal{P}_p(X)$, 
	then $\mathscr{A}$ is the uniform measure attractors for the family of 
	processes $\{ U^{(g, h)}(t, \tau) \}_{(g, h) \in \Sigma}$. In addition, the structure of such attractors can be characterized as follows:
	$$\mathscr{A}=\bigcup_{(g,h)\in\Sigma}\mathcal{K}_{(g,h)}\left(0\right).$$
\end{theorem}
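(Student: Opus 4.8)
The plan is to derive Theorem \ref{theo-1} from the classical theory of global attractors for semigroups on a metric space (see \cite{Hale-1987,Temam1997,Chepyzhov-1994}), applied to the lifted semigroup $\{S(t)\}_{t\geq 0}$ on $\mathcal{P}_p(X)\times\Sigma$, and then to transfer the resulting object to the family of processes by means of the translation identity. First I would record that $\{S(t)\}_{t\geq 0}$ is a semigroup on $\mathcal{P}_p(X)\times\Sigma$, as already observed after its definition, and that by hypothesis it is continuous, point dissipative and asymptotically compact. The one point where the classical theorem cannot be quoted verbatim is that $(\mathcal{P}_p(X),d_{\mathcal{P}(X)})$ is not complete; to get around this I would use point dissipativity to produce a bounded set -- hence one contained in some $\mathbb{B}_{\mathcal{P}_p(X)}(r_0)$ -- that absorbs every point, recall that $\Sigma$ is compact with $T(s)\Sigma=\Sigma$, and conclude that all the relevant limiting dynamics take place inside the complete metric space $\mathbb{B}_{\mathcal{P}_p(X)}(r_0)\times\Sigma$. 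Restricting to this complete space, the classical criterion yields a global attractor $\mathscr{A}_S\subset\mathcal{P}_p(X)\times\Sigma$ which is compact, strictly invariant ($S(t)\mathscr{A}_S=\mathscr{A}_S$ for all $t\geq 0$), attracts every bounded subset of $\mathcal{P}_p(X)\times\Sigma$, and is minimal among closed sets with this attracting property.

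Next I would set $\mathscr{A}=\Pi_1\mathscr{A}_S$, where $\Pi_1:\mathcal{P}_p(X)\times\Sigma\to\mathcal{P}_p(X)$ is the continuous projection, so that $\mathscr{A}$ is compact (property $(i)$) as the continuous image of a compact set. For the quasi-invariance $(ii)$, fix $t\geq\tau$ and $\mu\in\mathscr{A}$, pick $(g,h)$ with $(\mu,(g,h))\in\mathscr{A}_S$, use $S(t-\tau)\mathscr{A}_S=\mathscr{A}_S$ to write $(\mu,(g,h))=S(t-\tau)(\nu,(\bar g,\bar h))$ for some $(\nu,(\bar g,\bar h))\in\mathscr{A}_S$, and combine the explicit form of $S$ with the translation identity $U^{(g,h)}(t+s,\tau+s)=U^{T(s)(g,h)}(t,\tau)$ and $T(s)\Sigma=\Sigma$ to express $\mu$ as $U^{(g',h')}(t,\tau)\nu$ with $(g',h')\in\Sigma$ and $\nu\in\mathscr{A}$. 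For the uniform attraction $(iii)$, given a bounded $\mathfrak{D}\subset\mathcal{P}_p(X)$, the set $\mathfrak{D}\times\Sigma$ is bounded in the product space (here compactness of $\Sigma$ is used), so $\mathscr{A}_S$ attracts it; projecting by $\Pi_1$, rewriting $U^{(g,h)}(t,\tau)$ as $U^{T(\tau)(g,h)}(t-\tau,0)$ and using $T(\tau)\Sigma=\Sigma$, then taking the supremum over $(g,h)\in\Sigma$, gives $\lim_{t\to\infty}\sup_{(g,h)\in\Sigma}d_{\mathcal{P}(X)}(U^{(g,h)}(t,\tau)\mathfrak{D},\mathscr{A})=0$. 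For minimality $(iv)$, if $\mathscr{C}\subset\mathcal{P}_p(X)$ is compact and attracts every bounded set uniformly in $(g,h)\in\Sigma$, then $\mathscr{C}\times\Sigma$ is compact and, running the same translation-identity computation backwards, attracts every bounded subset of $\mathcal{P}_p(X)\times\Sigma$ under $\{S(t)\}$; minimality of $\mathscr{A}_S$ then forces $\mathscr{A}_S\subseteq\mathscr{C}\times\Sigma$, hence $\mathscr{A}\subseteq\mathscr{C}$.

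For the structure formula $\mathscr{A}=\bigcup_{(g,h)\in\Sigma}\mathcal{K}_{(g,h)}(0)$, I would establish both inclusions through complete orbits of $\{S(t)\}$. If $(g,h)\in\Sigma$ and $\Xi$ is a bounded complete solution of $U^{(g,h)}$, then $t\mapsto(\Xi(t),T(t)(g,h))$ is a complete orbit of $\{S(t)\}$ (the semigroup relation being verified with the translation identity) whose range is bounded and invariant, hence contained in $\mathscr{A}_S$, so $\Xi(0)\in\Pi_1\mathscr{A}_S=\mathscr{A}$; this gives $\supseteq$. Conversely, for $\mu\in\mathscr{A}$ choose $(\mu,(g_0,h_0))\in\mathscr{A}_S$; strict invariance of $\mathscr{A}_S$ provides a complete orbit of $\{S(t)\}$ through this point lying inside $\mathscr{A}_S$, and since $\mathscr{A}_S$ is bounded in $\mathcal{P}_p(X)\times\Sigma$, its first component, reparametrized via the translation identity, is a bounded complete solution $\Xi$ of $U^{(g_0,h_0)}$ with $\Xi(0)=\mu$, so $\mu\in\mathcal{K}_{(g_0,h_0)}(0)$; this gives $\subseteq$.

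The individual computations are routine; the genuinely delicate points are the incompleteness of $(\mathcal{P}_p(X),d_{\mathcal{P}(X)})$, which forces one to localize every argument to the complete ball $\mathbb{B}_{\mathcal{P}_p(X)}(r_0)\times\Sigma$ before invoking the classical attractor theorem, and the consistent bookkeeping with the translation identity and the invariance $T(s)\Sigma=\Sigma$ when passing between statements about the lifted semigroup $\{S(t)\}$ (indexed by absolute time) and statements about the family of processes $\{U^{(g,h)}(t,\tau)\}$ (indexed by the initial time $\tau$), especially in the structure formula.
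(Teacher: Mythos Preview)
The paper does not actually prove Theorem~\ref{theo-1}: it is stated as a known abstract result, with the sentence ``Building upon the theory of uniform and global attractors developed in \cite{Chepyzhov-1994,Hale-1987,Temam1997}, we state the following key abstract results, which can be also found in \cite{li2024b,LZH-AML}'' preceding the statement, and no proof follows. So there is nothing in the paper to compare your argument against line by line.

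That said, your proposal is precisely the standard Chepyzhov--Vishik scheme that the cited references implement: lift the family of processes to the skew-product semigroup $S(t)$ on $\mathcal{P}_p(X)\times\Sigma$, apply the classical global-attractor criterion (continuity, point dissipativity, asymptotic compactness), project back, and read off the four properties and the kernel-section structure via complete orbits and the translation identity. Your bookkeeping with $U^{(g,h)}(t+s,\tau+s)=U^{T(s)(g,h)}(t,\tau)$ and $T(s)\Sigma=\Sigma$ is correct in each of the places you use it, and the two inclusions for $\mathscr{A}=\bigcup_{(g,h)\in\Sigma}\mathcal{K}_{(g,h)}(0)$ are argued the right way. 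The one genuinely nonstandard point you flag---that $(\mathcal{P}_p(X),d_{\mathcal{P}(X)})$ is not complete---is exactly the subtlety the paper itself highlights earlier in Section~\ref{NS-Sec2}; your remedy of localizing to the complete ball $\mathbb{B}_{\mathcal{P}_p(X)}(r_0)\times\Sigma$ before invoking the Hale--Temam criterion is the natural fix and matches how the cited works \cite{li2024b,LZH-AML} handle it. In short, your proof is correct and is essentially a written-out version of what the paper imports by citation.
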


Now, the criterion of the existence and uniqueness of a uniform measure attractor shall be introduced.
\begin{theorem}\label{theo-2}
	If the family of processes $\{U^{(g,h)}(t,\tau)\}_{(g,h)\in\Sigma}$
	is jointly continuous and uniformly asymptotically compact and has a uniform closed 
	absorbing set $B$, then it has a uniform measure attractor $\mathscr{A}$. In addition, this attractor has the following structure:
	\begin{align*}
		\mathscr{A} = \bigcup_{(g, h) \in \Sigma} \mathcal{K}_{(g,h)}(0).
	\end{align*}
\end{theorem}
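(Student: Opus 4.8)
\textbf{Proof plan for Theorem \ref{theo-2}.} The strategy is to reduce the statement to Theorem \ref{theo-1} by verifying its hypotheses for the lifted semigroup $\{S(t)\}_{t\geq 0}$ on $\mathcal{P}_p(X)\times\Sigma$, and then to invoke Theorem \ref{theo-1} directly. Throughout, the translation identity $U^{(g,h)}(t+s,\tau+s)=U^{T(s)(g,h)}(t,\tau)$ together with $T(s)\Sigma=\Sigma$ is what makes $\{S(t)\}_{t\geq 0}$ a genuine semigroup rather than merely a family of maps; I would recall this at the outset. Since $\mathscr{A}_S$ (once it exists) projects to a set satisfying $(i)$--$(iv)$ in the definition of uniform measure attractor, and the structural formula $\mathscr{A}=\bigcup_{(g,h)\in\Sigma}\mathcal{K}_{(g,h)}(0)$ is already part of the conclusion of Theorem \ref{theo-1}, the entire content of the proof is: (a) continuity of $\{S(t)\}_{t\geq 0}$, (b) point dissipativity of $\{S(t)\}_{t\geq 0}$, and (c) asymptotic compactness of $\{S(t)\}_{t\geq 0}$.

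For (a), continuity of $S(t)$ on $\mathcal{P}_p(X)\times\Sigma$ follows from the joint continuity hypothesis on $\{U^{(g,h)}(t,\tau)\}_{(g,h)\in\Sigma}$: if $(\mu_n,(g_n,h_n))\to(\mu,(g,h))$ in $\mathcal{P}_p(X)\times\Sigma$, then $U^{(g_n,h_n)}(t,0)\mu_n\to U^{(g,h)}(t,0)\mu$ by joint continuity, and $T(t)(g_n,h_n)\to T(t)(g,h)$ because $\{T(s)\}_{s\in\mathbb{R}}$ is a continuous translation group on $\Sigma$; hence $S(t)(\mu_n,(g_n,h_n))\to S(t)(\mu,(g,h))$. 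For (b), I would use the uniform closed absorbing set $B\subseteq\mathcal{P}_p(X)$: since $\Sigma$ is compact in $C_b(\mathbb{R},X)\times C_b(\mathbb{R},X)$, the set $B\times\Sigma$ is bounded (indeed closed) in $\mathcal{P}_p(X)\times\Sigma$, and by the absorbing property, for any bounded $\mathfrak{D}\subseteq\mathcal{P}_p(X)$ there is $T=T(\mathfrak{D},g_0,h_0)$ with $U^{(g,h)}(t,0)\mathfrak{D}\subseteq B$ for all $(g,h)\in\Sigma$ and $t\geq T$; therefore $S(t)(\mathfrak{D}\times\Sigma)\subseteq B\times\Sigma$ for $t\geq T$, which gives point dissipativity with absorbing set $B\times\Sigma$ (noting that any point $(\mu,(g,h))$ lies in $\{\mu\}\times\Sigma$, a bounded set). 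For (c), asymptotic compactness of $\{S(t)\}_{t\geq 0}$ means: given $t_n\to+\infty$ and a bounded sequence $(\mu_n,(g_n,h_n))$ in $\mathcal{P}_p(X)\times\Sigma$, the sequence $S(t_n)(\mu_n,(g_n,h_n))=(U^{(g_n,h_n)}(t_n,0)\mu_n,T(t_n)(g_n,h_n))$ has a convergent subsequence. The first coordinate has a convergent subsequence precisely by the uniform asymptotic compactness hypothesis on $\{U^{(g,h)}(t,\tau)\}_{(g,h)\in\Sigma}$; the second coordinate has a convergent subsequence because $\Sigma$ is compact and $T(t_n)(g_n,h_n)\in\Sigma$. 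Passing to a common subsequence yields convergence in the product space.

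Having verified (a), (b), (c), Theorem \ref{theo-1} applies and produces a global attractor $\mathscr{A}_S$ for $\{S(t)\}_{t\geq 0}$ in $\mathcal{P}_p(X)\times\Sigma$, whose projection $\mathscr{A}$ onto $\mathcal{P}_p(X)$ is the uniform measure attractor of $\{U^{(g,h)}(t,\tau)\}_{(g,h)\in\Sigma}$ with the stated structure $\mathscr{A}=\bigcup_{(g,h)\in\Sigma}\mathcal{K}_{(g,h)}(0)$; uniqueness is the remark preceding the statement of Theorem \ref{theo-1}. The main (and only genuine) obstacle is bookkeeping: making sure the boundedness notion in $\mathcal{P}_p(X)\times\Sigma$ is the right one — one must check that $B\times\Sigma$ and $\{\mu\}\times\Sigma$ are bounded, which hinges on $\Sigma$ being compact (hence bounded) in $C_b(\mathbb{R},X)\times C_b(\mathbb{R},X)$ — and that the subsequence extraction in the product space is done consistently across both coordinates. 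Everything else is a direct translation of the three hypotheses of Theorem \ref{theo-2} into the three hypotheses of Theorem \ref{theo-1}.
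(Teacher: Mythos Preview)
Your reduction of Theorem \ref{theo-2} to Theorem \ref{theo-1} via the lifted semigroup $\{S(t)\}_{t\ge0}$ on $\mathcal{P}_p(X)\times\Sigma$ is correct and is precisely the classical Chepyzhov--Vishik strategy. The verifications of continuity, point dissipativity, and asymptotic compactness for $\{S(t)\}_{t\ge0}$ that you outline are all sound, and your observation that compactness of $\Sigma$ is what makes the second coordinate harmless in both (b) and (c) is the right emphasis.

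As for the comparison: the paper does not actually supply its own proof of Theorem \ref{theo-2}; it records the result as a known abstract criterion, citing \cite{Chepyzhov-1994,Hale-1987,Temam1997} and \cite{li2024b,LZH-AML}. Your plan is exactly the argument those references carry out, so there is no divergence to report.
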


\section{Well-posedness}\label{NS-Sec3}
This section shall present the existence and uniqueness of solutions to problem  \eqref{NSE-dis1.1}-\eqref{NSE-dis1.2}. For this purpose, we introduce the following notations and assumptions, which will be adopted throughout the subsequent article. 

\subsection{Basic spaces and notations}
Let $|u|$ be the modular of $u$,  $|\mathcal{O}|$ be the Lebesgue measure of domain $\mathcal{O}$, and let $C^{\infty}_{0}(\mathcal{O},\mathbb{R}^2)$ denote  the space of all infinite differentiable functions with compact support in $\mathcal{O}\subseteq \mathbb{R}^2$. Moreover, let $\|\cdot\|_{{L}^p(\mathcal{O})}$ be the norm of ${L}^p(\mathcal{O})=L^p(\mathcal{O},\mathbb{R})$ $(p\geq 1)$, let  $\mathbb{L}^p(\mathcal{O})=L^p(\mathcal{O},\mathbb{R}^2)$ with $p\geq 1$  and $\mathbb{H}^k(\mathcal{O})=H^k(\mathcal{O},\mathbb{R}^2)$ for $k\in \mathbb{Z}^+$. Let $\ell^2$ be a Hilbert space of real-valued and square-summable infinite sequences with the inner product
$$
(u,v)=\sum_{i\in \mathbb{Z}}u_iv_i, \quad  \forall u=(u_i)_{i\in \mathbb{Z}}, v=(v_i)_{i\in \mathbb{Z}} \in \ell^2,
$$
and the norm $\|u\|_{\ell^2}=\sqrt{(u, u)}$.  Let
$$
\mathcal{V}=\left\{u\in C^{\infty}_{0}(\mathcal{O},\mathbb{R}^2):\text{div~} u=0\right\},
$$
then we set

$\bullet$ $H=$ the closure of $\mathcal{V}$ in $\mathbb{L}^2(\mathcal{O})$ with inner product $(\cdot,\cdot)$ and norm $\|\cdot\|_H$;

$\bullet$ $V=$ the closure of $\mathcal{V}$  in $\mathbb{H}^1(\mathcal{O})$ with equivalent norm $\|\cdot\|_V=\|\nabla \cdot\|_{H}$;

$\bullet$ $V^*=$ the dual space of $V$ with norm $\|\cdot\|_{V^*}$.

For the relationship of $H$ and $V$, we have that the compact embedding $V\hookrightarrow H$ and the following Poincar\'{e}'s inequality
\begin{align}\label{NSE-dis3.1}
	\lambda \|u\|_{H}^2\leq \| u\|_{V}^2,\quad\forall u\in V.
\end{align}
We denote the duality product between $V$ and ${V^*}$ as $\langle \cdot,\cdot\rangle$.
Define the Stokes operator  $A: D(A)\mapsto H$ as 
$$	
Au=-\mathscr{P}\Delta u, ~~\forall u\in D(A),
$$
where the domain $D(A)=\mathbb{H}^2(\mathcal{O})\cap V$. Let $\mathscr{P}$ denote the Leray projection from $\mathbb{L}^2(\mathcal{O})$ to ${H}$.

Furthermore, for any $u,v\in V$, we define the bilinear map $B(\cdot,\cdot):V\times V\rightarrow V^*$ as follows:
$$
B(u,v)=\mathscr{P}(u\cdot \nabla )v,
$$
and the trilinear form $b(\cdot,\cdot,\cdot):V\times V \times V\rightarrow \mathbb{R}$ is defined by
\begin{align*}
	b(u,v,w)=\sum_{i,j=1}^2 \int_{\mathcal{O}} u_i(x)\frac{\partial v_j(x)}{\partial x_i}w_j(x)dx, \quad \forall u,v,w\in V.
\end{align*}
Define the  relation between bilinear and trilinear operators as follows:
$$
\langle B(u,v), w\rangle =b(u,v,w), \quad\forall u,v, w\in V,
$$
then using the integration by parts can obtain
\begin{align}\label{NSE-dis3.--002}
	b(u,v,w)=-b(u,w,v),\quad b(u,v,v)=0, \quad \forall u,v,w\in V.
\end{align}
In particular, for any $u,v,w\in V$, it holds
\begin{align}\label{NSE-dis3.2}
	\langle B(u,v), w\rangle \leq \mathfrak{C}\|\nabla v\|_{H} \|u\|_{H}^{1/2} \|\nabla u\|_{H}^{1/2} \|w\|_{H}^{1/2} \|\nabla w\|_{H}^{1/2}.
\end{align}

\subsection{Assumptions and Leray projection model} 
Throughout the entire paper, let $\delta_0$ be
the Dirac probability measure at $0$, we denote by $g_0(t),h_0(t)\in C_b(\mathbb{R},H)$ two almost periodic functions,
and assume that the function
$f:   \mathcal{O}
\times {\mathbb{R}}^2
\times \mathcal{P}_2(H)
\to {\mathbb{R}^2}$ fulfills the following  conditions:

$(H_1)$ For all $x \in \mathcal{O}$, $u, u_1, u_2  \in \mathbb{R}^2$ 
and $\mu, \mu_1, \mu_2 \in \mathcal{P}_2(H)$,
\begin{align}
	f(x, 0, \delta_0)&=0,\label{NSE-dis-f_1}\\
	|f(x,u,\mu)| \leq \phi_1(x)(1+&|u|)+\psi_1 (x) \sqrt{\mu(\|\cdot\|_{H}^2)},\label{NSE-dis-f_2}\\
		| f(x,u_1, \mu_1) - f(x,u_2, \mu_2) |
		\leq\, &\phi_2(x) |u_1 - u_2|+ \psi_2(x) \mathbb{W}_2(\mu_1, \mu_2),\label{NSE-dis-f_3}
\end{align}
where $\phi_i \in
L^\infty(\mathcal{O})$ and $\psi_i\in L^2(\mathcal{O})$ $(i = 1,2)$.

$(H_2)$ The function $h(t,x)$ is almost periodic and additionally satisfies
\begin{align}\label{NSE-dis-gh_03}
	\int_t^{t+1} \|\nabla h(s)\|_H^2ds \leq {\bf \hat{c}}<\infty.
\end{align}
Moreover, the time-dependent external forcing terms $g,h:\mathbb{R}\rightarrow H$ satisfy
\begin{align}\label{NSE-dis-gh_3}
	(g,h)\in \Sigma.
\end{align}

$(H_3)$ For each $k\in \mathbb{N}$, $\sigma_k:\mathbb{R}\times\mathbb{R}^2\times \mathcal{P}_2(H)\rightarrow \mathbb{R}^2$ is continuous such that for all $t\in \mathbb{R}$, $u\in \mathbb{R}^2$ and $\mu \in \mathcal {P}_2(H)$,
\begin{align}\label{NSE-dis-G1}
	|\sigma_k(t,u,\mu)|\leq \beta_k\left(1+\sqrt{ \mu (\|\cdot \|_H^2 )}\right)+\hat{\gamma}_k|u|,
\end{align}
where $\beta=\{\beta_{k}\}_{k=1}^\infty$ and $\hat{\gamma}=\{\hat{\gamma}_k\}_{k=1}^\infty$
are  the nonnegative sequences with $\|\beta\|_{\ell^2}^2+\|\hat{\gamma}\|_{\ell^2}^2=\sum_{k=1}^\infty
\left(\beta_k^2 + \hat{\gamma}_k^2\right)< \infty$.	

Moreover, we assume that $\sigma_k(t,u,\mu)$ is differentiable with respect to $u$ and uniformly Lipschitz continuous in both $u$ and $\mu$. That is, for every $k\in \mathbb{N}$, there exists a constant $L_k>0$ such that for all $t\in \mathbb{R}$,  $u_1, u_2 \in \mathbb{R}^2$ and $\mu_1, \mu_2 \in \mathcal {P}_2(H)$, it holds
\begin{align}\label{NSE-dis-G2}
	|\sigma_k(t,u_1, \mu_1) - \sigma_k(t,u_2, \mu_2) |
	\leq L_{k} \left( | u_1 - u_2 | + \mathbb{W}_2 (\mu_1, \mu_2) \right),
\end{align}
where   $L=\{L_{k}\}_{k=1}^\infty$ is a nonnegative sequence
such that  $\|L\|_{\ell^2}^2= \sum_{k=1}^\infty L_{k}^2< \infty$.

We can infer from \eqref{NSE-dis-G2} that for all $t\in \mathbb{R}$, $u \in \mathbb{R}^2$ and $\mu \in \mathcal {P}_2(H)$,
\begin{align}\label{NSE-dis-G3}
	\left|{\frac {\partial \sigma_k}{\partial u}}(t,u , \mu )\right|
	\leq L_{k}.
\end{align}

For each $t\in \mathbb{R}$, $u \in H$ and $\mu \in \mathcal {P}_2(H)$, we define a map
$\sigma(t,u,\mu): l^2 \rightarrow H$ by
$$
\sigma(t,u, \mu)(\zeta)(x)=\sum_{k=1}^\infty \left(h(t,x)+\kappa(x)\sigma_k(t,u,\mu)\right)\zeta_k,\quad \forall \zeta=\{\zeta_k\}_{k=1}^\infty\in \ell^2, x\in\mathcal{O}.
$$
Denote by $\mathcal{L}_2(\ell^2,H)$ the space of all Hilbert-Schmidt operators from $l^2$ to $H$, it is endowed with norm  $\|\cdot\|_{\mathcal{L}_2(l^2;H)}$. Then, we can deduce from \eqref{NSE-dis-gh_3} and \eqref{NSE-dis-G1} that for any $u\in H$ and $\mu \in \mathcal {P}_2(H)$,
\begin{align}\label{NSE-dis-G4}
	\begin{split}
		&~~\|\sigma(t,u, \mu)\|_{\mathcal{L}_2(l^2;H)}^2
		=\sum_{k=1}^\infty\int_{\mathcal{O}} |h(t,x)+\kappa(x)\sigma_k(t,u,\mu)|^2dx\\
		&\leq 2\|h_0\|_{C_b(\mathbb{R},H)}^2+8\|\beta\|_{\ell^2}^2\|\kappa\|^2_{L^\infty(\mathcal{O})}|\mathcal{O}|\left(1+\mu(\|\cdot\|_H^2)\right) +4\|\kappa\|^2_{L^\infty(\mathcal{O})}\|\hat{\gamma}\|_{\ell^2}^2\|u\|_{H}^2<\infty,
	\end{split}
\end{align}
where we used $\|h(t)\|_H^2\leq \|h_0\|_{C_b(\mathbb{R},H)}^2$.
Furthermore, by \eqref{NSE-dis-G2} we find that for all $u_1,u_2 \in H$ and $\mu_{1},\mu_{2}\in \mathcal {P}_2(H)$,
\begin{align}\label{NSE-dis-G5}
	\begin{split}
	&\|\sigma(t,u_1, \mu_1)-\sigma(t,u_2, \mu_2)\|_{\mathcal{L}_2(l^2;H)}^2\\
	=&\sum_{k=1}^\infty\int_{\mathcal{O}}|\kappa(x)|^2|\sigma_k(t,u_1, \mu_1) - \sigma_k(t,u_2, \mu_2) |^2dx\\
	\leq& 2\|\kappa\|^2_{L^\infty(\mathcal{O})}\|L\|_{\ell^2}^2(1+|\mathcal{O}|)\left(\|u_1-u_2\|_H^2+\mathbb{W}_2^2 (\mu_1, \mu_2)\right).
	\end{split}
\end{align}

 Under the framework of hypothesis $(H_2)$, the time-dependent function $h$ is required to satisfy Conditions \eqref{NSE-dis-gh_03} and \eqref{NSE-dis-gh_3}. To illustrate this constructively, we now present a concrete exemplification.
 
 \begin{example}
 We consider the following separated variable form:
 $$
 h(t,x)=h_1(t)\cdot h_2(x), ~~\forall x\in \mathcal{O}, t\in \mathbb{R},
 $$
 where $h_1(t)$ is an almost periodic function with respect to $t$, and $h_2(x)$ is a smooth function with bounded derivatives. Then, $h(t,x)$ is almost periodic and $|\nabla h|$ is bounded. The specific forms of $h_1(t)$ and $h_2(x)$ may be assumed as follows:
 
 Define the functions $h_1(t)=\sin t+ \sin{\sqrt{2}t}$ and $h_2(x)=\arctan{x}$, and consider 
 $$
 h(t,x)=h_1(t)\cdot h_2(x)=\left(\sin t+ \sin{\sqrt{2}t}\right)\cdot\arctan{x}.
 $$
 It is easy to see that $h(t, x)$ is almost periodic with respect to $t$. Moreover, since $|\nabla h(t,x)|=|h_1(t) \cdot \frac{1}{1+x^2}|$, we have
 \begin{align*}
 	\int_t^{t+1} \|\nabla h(s)\|_H^2ds \leq |\mathcal{O}|\int_t^{t+1} |h_1(s)|^2ds\leq 4|\mathcal{O}|,
 \end{align*}
 then the integral $\int_t^{t+1} \|\nabla h(s)\|_H^2ds$ is uniformly bounded. 
 \end{example}

With the help of the aforementioned notations, we can rewrite \eqref{NSE-dis1.1}  into the following Leray projection form.
\begin{align}\label{NSE-dis1.-01}
	\left\{
	\begin{aligned}
		&du(t)+\nu Au(t)dt+B(u(t),u(t))dt\\
		&\qquad = g(t)dt +f(x, u (t), \mathscr{L}_{u(t)} ) dt+\varepsilon \sigma\left(t,u(t),\mathscr{L}_{u(t)}\right)d W(t),\\
		&u(x,\tau)=u_\tau(x), \quad \text{for}\,\,x\in \mathcal{O}.
	\end{aligned}
	\right.
\end{align}

\subsection{Existence and uniqueness of solutions}
In this subsection, we shall present the well-posedness result of problem \eqref{NSE-dis1.-01} under the following definition of the existence of solutions.
\begin{definition}\label{NSE-dis-def2.1} 
	Let  $(\Omega, \mathscr{F}, \{\mathscr{F}_t\}_{t\in \mathbb{R}}, \mathbb{P})$ be a fixed complete filtered probability space,
	for every $\tau \in \mathbb{R}$, $T>0$, $p\geq 2$, $\varepsilon \in (0,1]$, $u_\tau \in L^p(\Omega, \mathscr{F}_\tau; H)$,
	a continuous $H$-valued $\mathscr{F}_t$-adapted stochastic process $u$ is called a strong
	solution of problem \eqref{NSE-dis1.-01} if
	$$
	u \in C([\tau,\tau+T]; H)\cap L^2((\tau,\tau+T); V)~~\mathbb{P}\text{-almost surely},
	$$
	and for each $t\in [\tau,\tau+T] $ and $v \in V$, the following equality holds, $\mathbb{P}$--almost surely,
	\begin{align*}
		&\,\, (u (t), v)
		+ \nu\int_\tau^t
		\langle Au(s),v\rangle ds
		+ \int_\tau^t  \langle B(u(s),u(s)), v \rangle ds -\int_\tau^t \left(f(x, u (s), \mathscr{L}_{u (s)} ),v\right) ds
		\nonumber \\
		&=  (u_\tau, v)
		+ \int_\tau^t \left(g(s),v\right)ds
		+ {\varepsilon} \int_\tau^t
		\left(\sigma(s,u(s), \mathscr{L}_{u(s)} ) dW(s), v
		\right).
	\end{align*}
\end{definition}

We now formulate the existence and uniqueness of solutions of problem \eqref{NSE-dis1.-01}, see e.g., \cite[Theorem $3.2$]{QZH-2D-2025}.
\begin{theorem}\label{NSE-dis-well2.2}
	Suppose that $(H_1)-(H_3)$ hold.
	Then, for every $\tau \in \mathbb{R}$, $T>0$, $\varepsilon\in (0,1]$, $p\geq 2$, $u_\tau \in L^p(\Omega, \mathscr{F}_\tau; H)$,
	the problem \eqref{NSE-dis1.-01} has a unique solution
	$u$	under the sense of Definition \ref{NSE-dis-def2.1}, and it satisfies the energy equality, for all $t\in [\tau,\tau+T]$,
	\begin{align}\label{NSE-dis-def2.2}
		\begin{split}
			& \| u (t) \|_H^2 + 2\nu \int_\tau^t\|u(s)\|_V^2 ds-2 \int_\tau^t (f(x, u (s), \mathscr{L}_{u(s)} ),  u (s) ) ds \\
			& =  \| u_\tau \|_H^2 +2 \int_\tau^t (g(s),u(s))ds
			+ \varepsilon^2 \int_\tau^t \| \sigma(s,u (s), \mathscr{L}_{u (s)} ) \|_{\mathcal{L}_2(\ell^2,H)}^2 ds  \\
			&\quad + 2 \varepsilon \int_\tau^t
			\left( u (s), \sigma(s,u (s), \mathcal{L}_{u (s)}) dW(s)
			\right), \ \ \    \mathbb{P} \text{--almost surely}
		\end{split}
	\end{align}
	Moreover, the following uniform estimates are valid:
	\begin{align*}
		\mathbb{E}\left[\sup_{t\in [\tau,\tau+T]}\| u (t)\|_{H}^{p}\right]
		+ \int_{\tau}^{\tau+T} \mathbb{E}\left[\| u(s) \|_{H}^{p-2}\| u(s) \|_{V}^{2}\right]ds
		\leq  C
		\left( 1 + \mathbb{E}\left[\| u_\tau \|_{H}^{p}\right]
		\right),
	\end{align*}
	where $C=C(\tau,T) > 0$ is a constant independent of $u_\tau$ and $\varepsilon$.
\end{theorem}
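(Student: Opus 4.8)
\noindent\emph{Proof strategy.} The plan is to prove Theorem~\ref{NSE-dis-well2.2} by combining the classical Galerkin scheme for the 2D stochastic Navier--Stokes equations with a fixed-point argument in the law variable, which is the standard route for McKean--Vlasov type SPDEs. First I would \emph{decouple the distribution}: fixing $\tau$, $T$ and $u_\tau$, for a prescribed curve $\mu\in C([\tau,\tau+T];\mathcal{P}_2(H))$ with $\sup_{s}\mu(s)(\|\cdot\|_H^2)<\infty$, I would replace $\mathscr{L}_{u(t)}$ in \eqref{NSE-dis1.-01} by $\mu(t)$, obtaining a nonautonomous 2D stochastic Navier--Stokes equation with law-free, state-dependent drift $f(x,u,\mu(t))$ and diffusion $\sigma(t,u,\mu(t))$. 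Projecting onto the span $H_n$ of the first $n$ eigenfunctions of the Stokes operator $A$ turns this into a finite system of It\^o SDEs whose coefficients are locally Lipschitz and of linear growth by $(H_1)$, $(H_3)$ and \eqref{NSE-dis3.2}; this yields a unique local Galerkin solution $u^n$, and the a priori bounds below prevent blow-up, hence global existence of $u^n$.

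For the a priori estimates I would apply It\^o's formula to $\|u^n(t)\|_H^2$, use $\langle B(u^n,u^n),u^n\rangle=0$ from \eqref{NSE-dis3.--002}, the growth bounds \eqref{NSE-dis-f_1}--\eqref{NSE-dis-f_2} and \eqref{NSE-dis-G4} for $f$ and $\sigma$, the Poincar\'e inequality \eqref{NSE-dis3.1}, Young's inequality, and the boundedness of $g$ in $C_b(\mathbb{R},H)$; taking expectations and applying the Burkholder--Davis--Gundy inequality (in supremum form for the $C([\tau,\tau+T];H)$ bound, absorbing $\tfrac12\mathbb{E}\sup_{t}\|u^n(t)\|_H^2$), Gronwall's lemma then bounds $\|u^n\|_{L^2(\Omega,C([\tau,\tau+T];H))}^2+\|u^n\|_{L^2(\Omega,L^2(\tau,\tau+T;V))}^2$ by $C(1+\mathbb{E}\|u_\tau\|_H^2+\int_\tau^{\tau+T}\mu(s)(\|\cdot\|_H^2)\,ds)$, uniformly in $n$ and $\varepsilon$. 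With these bounds plus equicontinuity estimates read off from the equation in $V^*$, I would pass to the limit by the Aubin--Lions--Simon compactness lemma together with the Skorokhod representation theorem (or the Gy\"ongy--Krylov convergence-in-probability lemma), obtaining a solution $u^\mu$ of the decoupled equation in the sense of Definition~\ref{NSE-dis-def2.1}. The energy equality \eqref{NSE-dis-def2.2} for $u^\mu$ then follows from the It\^o formula for $\|\cdot\|_H^2$ of Krylov--Rozovskii/Pardoux type, which applies because $g+f(x,u^\mu,\mu)-\nu Au^\mu-B(u^\mu,u^\mu)\in L^2(\tau,\tau+T;V^*)$ $\mathbb{P}$-almost surely: indeed $\|B(u^\mu,u^\mu)\|_{V^*}\le\mathfrak{C}\|u^\mu\|_H\|u^\mu\|_V$ by \eqref{NSE-dis3.2} and $u^\mu\in L^\infty(\tau,\tau+T;H)\cap L^2(\tau,\tau+T;V)$.

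Uniqueness for the decoupled problem and the McKean--Vlasov closure would be handled together by a difference estimate. For two solutions $u_1,u_2$ (driven by the same frozen law, or by two frozen laws $\mu_1,\mu_2$) set $w=u_1-u_2$; applying It\^o's formula to $\|w\|_H^2$ and using the identity $\langle B(u_1,u_1)-B(u_2,u_2),w\rangle=b(w,u_2,w)$ with the 2D bound $|b(w,u_2,w)|\le\tfrac{\nu}{2}\|w\|_V^2+\tfrac{C}{\nu}\|u_2\|_V^2\|w\|_H^2$ coming from \eqref{NSE-dis3.2}, together with the Lipschitz bounds \eqref{NSE-dis-f_3} and \eqref{NSE-dis-G5} and (when comparing two genuine solutions of the coupled equation) the elementary inequality $\mathbb{W}_2(\mathscr{L}_{u_1(s)},\mathscr{L}_{u_2(s)})^2\le\mathbb{E}\|w(s)\|_H^2$, one arrives at $\mathbb{E}\|w(t)\|_H^2\le C\int_\tau^t(1+\|u_2(s)\|_V^2)\mathbb{E}\|w(s)\|_H^2\,ds+C\int_\tau^t\mathbb{W}_2(\mu_1(s),\mu_2(s))^2\,ds$. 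A stochastic Gronwall lemma then yields $\sup_{t\in[\tau,\tau+T]}\mathbb{E}\|w(t)\|_H^2\le C(T)\sup_{t}\mathbb{W}_2(\mu_1(t),\mu_2(t))^2$; choosing $T$ small (or inserting an exponential weight $e^{-\lambda t}$) makes $C(T)<1$, so the map $\Phi:\mu\mapsto\mathscr{L}_{u^\mu(\cdot)}$ is a contraction on $\{\mu\in C([\tau,\tau+T];\mathcal{P}_2(H)):\sup_s\mu(s)(\|\cdot\|_H^2)\le R\}$ for $R$ depending on $\mathbb{E}\|u_\tau\|_H^2$. Its unique fixed point, extended to all of $[\tau,\tau+T]$ by concatenation, is the unique strong solution $u$, which inherits the energy equality and the uniform estimate from the previous steps. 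The main obstacle is precisely this last step: the Gronwall factor $\exp\!\big(C\int_\tau^{\tau+T}\|u_2(s)\|_V^2\,ds\big)$ is only $\mathbb{P}$-almost surely finite because of the 2D convective term, so one must localize by stopping times or invoke a stochastic Gronwall inequality to move from pathwise to moment bounds on $\mathbb{E}\|w(t)\|_H^2$, and then carefully track the $T$-dependence of the resulting constant to secure the contraction; all remaining steps are routine given $(H_1)$--$(H_3)$ and the estimates \eqref{NSE-dis3.1}, \eqref{NSE-dis3.2}, \eqref{NSE-dis-G4}, \eqref{NSE-dis-G5}.
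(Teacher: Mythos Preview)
The paper does not actually prove Theorem~\ref{NSE-dis-well2.2}; it merely states the result and refers to \cite[Theorem~3.2]{QZH-2D-2025}. Your outline---Galerkin approximation for the law-frozen 2D stochastic Navier--Stokes equation, uniform bounds in $L^2(\Omega;C([\tau,\tau+T];H))\cap L^2(\Omega;L^2(\tau,\tau+T;V))$ via It\^o's formula and BDG, passage to the limit by Aubin--Lions--Simon together with Skorokhod, Krylov--Rozovski\u{\i} for the energy equality, and then a fixed point in the law variable---is the standard route for McKean--Vlasov SPDEs and is almost certainly what the cited paper carries out.

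One point deserves care. The displayed inequality
\[
\mathbb{E}\|w(t)\|_H^2\le C\int_\tau^t\bigl(1+\|u_2(s)\|_V^2\bigr)\,\mathbb{E}\|w(s)\|_H^2\,ds+C\int_\tau^t\mathbb{W}_2(\mu_1(s),\mu_2(s))^2\,ds
\]
cannot hold as written: $\|u_2(s)\|_V^2$ is random while $\mathbb{E}\|w(s)\|_H^2$ is deterministic. What you really obtain is a pathwise differential inequality with the random coefficient $\|u_2(s)\|_V^2$. You correctly identify this as the main obstacle, but the remedy you suggest---stopping times or a stochastic Gronwall lemma---only delivers continuous dependence \emph{in probability} (exactly as the present paper does later in Lemma~\ref{NSE-disghjsss5.1}), which suffices for pathwise uniqueness of the coupled equation but does \emph{not} directly yield a contraction constant $C(T)<1$ for $\Phi:\mu\mapsto\mathscr{L}_{u^\mu}$ in the $\mathbb{W}_2$ metric, because one lacks exponential moments of $\int_\tau^{\tau+T}\|u_2\|_V^2\,ds$. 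A cleaner way to close the argument is either (i) to bypass the contraction entirely---obtain a martingale solution of the coupled equation by tightness of the full Galerkin sequence and then upgrade to strong existence via pathwise uniqueness and Yamada--Watanabe---or (ii) to run a Picard iteration $u^{(n+1)}=u^{\mu^{(n)}}$ and control $e^{-c\int_\tau^t\|u^{(n)}\|_V^2\,ds}\|u^{(n+1)}-u^{(n)}\|_H^2$, using the uniform bound on $\mathbb{E}\int_\tau^{\tau+T}\|u^{(n)}\|_V^2\,ds$ to handle the weight. Either replacement makes the sketch complete; the ``choose $T$ small so that $C(T)<1$'' step as stated does not.
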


The result of the existence and uniqueness of weak solutions for problem \eqref{NSE-dis1.-01} is given below.
\begin{proposition}\label{WE-unds1the}
	If $(H_1)-(H_3)$ hold, then the problem \eqref{NSE-dis1.-01} has a unique weak solution for every initial distribution in $\mathcal{P}_p(H)$ with $p\geq 2$. Weak uniqueness of problem \eqref{NSE-dis1.-01} in $\mathcal{P}_p(H)$ means that for any $\tau \in \mathbb{R}$, any two weak solutions of 	\eqref{NSE-dis1.-01} from $\tau$ with a common initial distribution in $\mathcal{P}_p(H)$ are equal in law; more precisely, if $(u(t),W(t))_{t\geq \tau}$ with respect to $(\Omega, \mathscr{F}, \{\mathscr{F}_t\}_{t\geq \tau}, \mathbb{P})$ and $(\widetilde{u}(t),\widetilde{W}(t))_{t\geq \tau}$ with respect to $(\widetilde{\Omega}, \widetilde{\mathscr{F}}, \{\widetilde{\mathscr{F}}_t\}_{t\geq \tau}, \widetilde{\mathbb{P}})$ are weak solutions of \eqref{NSE-dis1.-01} with the same initial distribution $\mathscr{L}_{u_\tau}|_{\mathbb{P}}=\mathscr{L}_{\widetilde{u}_\tau}|_{\widetilde{\mathbb{P}}}$, then $\mathscr{L}_{u(\cdot)}|_{\mathbb{P}}=\mathscr{L}_{\widetilde{u}(\cdot)}|_{\widetilde{\mathbb{P}}}$.
\end{proposition}
\begin{proof}
	The proof of this proposition is similar to those in \cite[Theorem 2.1]{WangFy-2018} and \cite[Proposition 3.2]{SSLd-2024}. We only provide a brief outline of the steps. The proof of the existence of weak solutions is similar to that in \cite{QZH-2D-2025}, and we present the proof of weak uniqueness. Let $(u(t),W(t))_{t\geq \tau}$  and $(\widetilde{u}(t),\widetilde{W}(t))_{t\geq \tau}$ respect to $(\Omega, \mathscr{F}, \{\mathscr{F}_t\}_{t\geq \tau}, \mathbb{P})$ and $(\widetilde{\Omega}, \widetilde{\mathscr{F}}, \{\widetilde{\mathscr{F}}_t\}_{t\geq \tau}, \widetilde{\mathbb{P}})$, respectively,
	be two weak solutions of \eqref{NSE-dis1.-01} such that $\mathscr{L}_{u_\tau}|_{\mathbb{P}}=\mathscr{L}_{\widetilde{u}_\tau}|_{\widetilde{\mathbb{P}}}$. Then, $u(t)$ solves \eqref{NSE-dis1.-01} while $\widetilde{u}(t)$ solves the following equation:
	\begin{align}\label{WE-unds111}
	\begin{split}
			&\,\, d\widetilde{u}(t)+\nu A\widetilde{u}(t)dt+B(\widetilde{u}(t),\widetilde{u}(t))dt\\
			& = g(t)dt +f(x, \widetilde{u}(t), \mathscr{L}_{\widetilde{u}(t)}|_{\widetilde{\mathbb{P}}} ) dt+\varepsilon \sigma\left(t,\widetilde{u}(t),\mathscr{L}_{\widetilde{u}(t)}|_{\widetilde{\mathbb{P}}}\right)d \widetilde{W}(t).
	\end{split}
	\end{align}
	In order to prove $\mathscr{L}_{u(\cdot)}|_{\mathbb{P}}=\mathscr{L}_{\widetilde{u}(\cdot)}|_{\widetilde{\mathbb{P}}}$, let $\mathscr{L}_{u(t)}|_{\mathbb{P}}=\mu_t$, and consider the stochastic differential equation:
	 	\begin{align}\label{WE-unds112}
	 	d\bar{u}(t)+\nu A\bar{u}(t)dt+B(\bar{u}(t),\bar{u}(t))dt = g(t)dt +f(x, \bar{u}(t),\mu_t ) dt+\varepsilon \sigma\left(t,\bar{u}(t),\mu_t\right)d \widetilde{W}(t)
	 \end{align}
	 with $\bar{u}_\tau=\widetilde{u}_{\tau}$. By  $(H_1)-(H_3)$, we know that \eqref{WE-unds112} has a unique strong solution. It follows from the Yamada-Watanabe theorem that the weak solution of \eqref{WE-unds112}  is also unique. Note that
	 \begin{align*}
	 		&\,\, du(t)+\nu Au(t)dt+B(u(t),u(t))dt\\
	 		& = g(t)dt +f(x, u(t),\mu_t ) dt+\varepsilon \sigma\left(t,u(t),\mu_t\right)d {W}(t), \quad  \mathscr{L}_{u_\tau}|_{\mathbb{P}}=\mathscr{L}_{\widetilde{u}_\tau}|_{\widetilde{\mathbb{P}}}.
	 \end{align*}
	 Consequently, by the weak uniqueness of \eqref{WE-unds112}, it follows that $\mathscr{L}_{u(\cdot)}|_{\mathbb{P}}=\mathscr{L}_{\bar{u}(\cdot)}|_{\widetilde{\mathbb{P}}}$. 
	 
	 With this in mind, we rewrite \eqref{WE-unds112} as
	 \begin{align*}
	 	d\bar{u}(t)+\nu A\bar{u}(t)dt+B(\bar{u}(t),\bar{u}(t))dt = g(t)dt +f(x, \bar{u}(t),\mathscr{L}_{\bar{u}(\cdot)}|_{\widetilde{\mathbb{P}}}) dt+\varepsilon \sigma\left(t,\bar{u}(t),\mathscr{L}_{\bar{u}(\cdot)}|_{\widetilde{\mathbb{P}}}\right)d \widetilde{W}(t)
	 \end{align*}
	 with $\bar{u}_\tau=\widetilde{u}_{\tau}$.
	 Since the equation \eqref{WE-unds111} has a unique strong solution, we obtain $\widetilde{u}=\bar{u}$. Therefore, it holds $\mathscr{L}_{u(\cdot)}|_{\mathbb{P}}=\mathscr{L}_{\bar{u}(\cdot)}|_{\widetilde{\mathbb{P}}}=\mathscr{L}_{\widetilde{u}(\cdot)}|_{\widetilde{\mathbb{P}}}$. This complete the proof.
\end{proof}

\section{Long-time uniform estimates of solutions}\label{NS-Sec4}

In this section, we derive some uniform a priori estimates for the solution of problem \eqref{NSE-dis1.-01}, which serve as the foundation for proving the existence and uniqueness of pullback measure attractors.
To this end, let 
\begin{align*}
	\mathbf{k}_0&=6\|\phi_1\|_{L^\infty(\mathcal{O})}+6\|\psi_1\|_{L^2(\mathcal{O})}+6\|\beta\|_{\ell^2}^2+8\|\kappa\|^2_{L^\infty(\mathcal{O})}\|\hat{\gamma}\|_{\ell^2}^2+16\|\beta\|_{\ell^2}^2\|\kappa\|^2_{L^\infty(\mathcal{O})}|\mathcal{O}|,
\end{align*}
we assume that
\begin{align}\label{NSE-dis-def4.1}
	\nu>\frac{2\mathbf{k}_0}{\lambda }.
\end{align}
From \eqref{NSE-dis-def4.1}, it is straightforward to deduce that there exists a sufficiently small constant $\gamma\in \left(0,\frac{1}{2}\right)$ such that
\begin{align}\label{NSE-dis-def4.1*}
	\frac{\nu\lambda }{2}-2\gamma>\mathbf{k}_0.
\end{align}


\begin{lemma}\label{NSE-dis-lemma4.1*}
	Suppose that $(H_1)-(H_3)$, \eqref{NSE-dis-def4.1} and \eqref{NSE-dis-def4.1*} hold. Then, for any $R>0$, there exists $T=T(R)>0$ such that for all $\tau \in \mathbb{R}$, $t-\tau\geq T$ and $\varepsilon\in (0,1]$, the solution $u$ of problem \eqref{NSE-dis1.-01} satisfies
	\begin{align*}
		\mathbb{E}\left[\|u(t,\tau,u_{\tau})\|_H^2\right]+\int_{\tau}^t e^{\gamma(s-t)}\mathbb{E} \left[\| u(s,\tau,u_{\tau})\|_V^2\right]ds\leq \mathcal{M}_1,
	\end{align*}
	where $u_{\tau}\in L^2(\Omega,\mathscr{F}_{\tau};H)$ with $\mathbb{E}\left[\|u_\tau\|_H^2\right]\leq R$, and $\mathcal{M}_1>0$ is a constant that depends on $\gamma, \nu, |\mathcal{O}|, \|\phi_1\|_{L^\infty(\mathcal{O})},\|\beta\|_{\ell^2},\|\kappa\|_{L^{\infty}(\mathcal{O})}, \|\hat{\gamma}\|_{\ell^2}, g_0, h_0$, but does not depend on $\varepsilon, \tau, u_{\tau}$ and $(g,h)\in \Sigma$. 
\end{lemma}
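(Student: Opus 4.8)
The plan is to apply the It\^o formula to $e^{\gamma(t-\tau)}\|u(t)\|_H^2$ (equivalently, to work with the energy equality \eqref{NSE-dis-def2.2} and multiply through by the weight $e^{\gamma t}$), take expectations to kill the stochastic integral, and then absorb all the "bad" terms into the dissipation $\nu\|u\|_V^2$ using the Poincar\'e inequality \eqref{NSE-dis3.1}, the growth bounds $(H_1)$ and $(H_3)$, and the spectral-gap condition \eqref{NSE-dis-def4.1*}. First I would compute
\begin{align*}
	\frac{d}{dt}\left(e^{\gamma t}\mathbb{E}\|u(t)\|_H^2\right)
	&= \gamma e^{\gamma t}\mathbb{E}\|u(t)\|_H^2 - 2\nu e^{\gamma t}\mathbb{E}\|u(t)\|_V^2
	+ 2e^{\gamma t}\mathbb{E}\left(f(x,u,\mathscr{L}_u),u\right) \\
	&\quad + 2e^{\gamma t}\mathbb{E}(g(t),u) + \varepsilon^2 e^{\gamma t}\mathbb{E}\|\sigma(t,u,\mathscr{L}_u)\|_{\mathcal{L}_2(\ell^2,H)}^2,
\end{align*}
where the trilinear term vanishes by \eqref{NSE-dis3.--002}. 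The key point is that $\mathscr{L}_{u(t)}$ is the law of $u(t)$, so $\mu(\|\cdot\|_H^2) = \mathbb{E}\|u(t)\|_H^2$ for $\mu = \mathscr{L}_{u(t)}$; this turns the distribution-dependent terms into ordinary terms in $\mathbb{E}\|u\|_H^2$ once expectation is taken. Using \eqref{NSE-dis-f_2}, Cauchy--Schwarz and Young's inequality on the $f$-term, Young's inequality on $2(g,h)$ against a small multiple of $\|u\|_V^2$, and \eqref{NSE-dis-G4} (with $\varepsilon\le1$) on the noise term, every occurrence of $\|u\|_H^2$ on the right collects a coefficient bounded by $\mathbf{k}_0$, while the $\|u\|_V^2$ terms other than $-2\nu\|u\|_V^2$ can be made arbitrarily small, leaving (after one application of \eqref{NSE-dis3.1} to convert one copy of $\tfrac{\nu\lambda}{2}\|u\|_H^2$ worth of dissipation) the differential inequality
\begin{align*}
	\frac{d}{dt}\left(e^{\gamma t}\mathbb{E}\|u(t)\|_H^2\right)
	+ \nu e^{\gamma t}\mathbb{E}\|u(t)\|_V^2
	\leq -\left(\tfrac{\nu\lambda}{2}-2\gamma-\mathbf{k}_0\right)e^{\gamma t}\mathbb{E}\|u(t)\|_H^2 + \mathbf{C}e^{\gamma t},
\end{align*}
for some constant $\mathbf{C}>0$ depending only on the data listed in the statement. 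By \eqref{NSE-dis-def4.1*} the first term on the right is nonpositive, so it may be dropped (or kept to improve the estimate; I would keep just enough to retain $\gamma$-decay), and I obtain
$$
\frac{d}{dt}\left(e^{\gamma t}\mathbb{E}\|u(t)\|_H^2\right) + \nu e^{\gamma t}\mathbb{E}\|u(t)\|_V^2 \leq \mathbf{C}e^{\gamma t}.
$$

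Next I would integrate this inequality from $\tau$ to $t$. The $\|u\|_V^2$ term survives on the left as $\nu\int_\tau^t e^{\gamma s}\mathbb{E}\|u(s)\|_V^2\,ds$, and after multiplying through by $e^{-\gamma t}$ it becomes $\nu\int_\tau^t e^{\gamma(s-t)}\mathbb{E}\|u(s)\|_V^2\,ds$, exactly the quantity in the statement. Integrating the right side gives $\tfrac{\mathbf{C}}{\gamma}(e^{\gamma t}-e^{\gamma\tau})\le\tfrac{\mathbf{C}}{\gamma}e^{\gamma t}$, and the initial term contributes $e^{\gamma\tau}\mathbb{E}\|u_\tau\|_H^2\le e^{\gamma\tau}R$. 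Thus
$$
\mathbb{E}\|u(t)\|_H^2 + \nu\int_\tau^t e^{\gamma(s-t)}\mathbb{E}\|u(s)\|_V^2\,ds \leq e^{-\gamma(t-\tau)}R + \frac{\mathbf{C}}{\gamma}.
$$
Since $e^{-\gamma(t-\tau)}R\to0$ as $t-\tau\to\infty$, there is $T=T(R)>0$ so that $e^{-\gamma(t-\tau)}R\le 1$ whenever $t-\tau\ge T$; one then sets $\mathcal{M}_1 = 1 + \mathbf{C}/\gamma$ (or absorbs the $\nu$ by dividing), which depends only on the listed data and not on $\varepsilon,\tau,u_\tau$ or $(g,h)\in\Sigma$ — the $(g,h)$-independence coming from the uniform bound $\|g(t)\|_H\le\|g_0\|_{C_b(\mathbb{R},H)}$, $\|h(t)\|_H\le\|h_0\|_{C_b(\mathbb{R},H)}$ valid over the whole hull $\Sigma$.

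The main obstacle — and the reason the weight $\gamma$ and condition \eqref{NSE-dis-def4.1} are imposed — is the bookkeeping of constants: one must verify that the sum of all coefficients multiplying $\mathbb{E}\|u\|_H^2$ on the right-hand side (from $f$ via \eqref{NSE-dis-f_2} after a Young split, from $g$, and from $\sigma$ via \eqref{NSE-dis-G4}) is genuinely dominated by $\mathbf{k}_0$, so that \eqref{NSE-dis-def4.1*} guarantees the net coefficient $\tfrac{\nu\lambda}{2}-2\gamma-\mathbf{k}_0$ is positive; and simultaneously that the $\|u\|_V^2$-type error terms (again from $f$, $g$, and the $\varepsilon^2$ noise term) can be made small enough to be absorbed into $\nu\|u\|_V^2$ while still leaving a positive multiple of $\|u\|_V^2$ on the left. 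A secondary subtlety is justifying the use of the energy equality in expectation and the vanishing of the martingale term: this follows from Theorem \ref{NSE-dis-well2.2}, which gives $u\in L^2(\Omega,L^2(\tau,\tau+T;V))$ so that $s\mapsto\mathbb{E}\|u(s)\|_V^2$ is integrable and the stochastic integral in \eqref{NSE-dis-def2.2} is a genuine martingale with zero mean (one may first localize by stopping times and pass to the limit by the uniform estimate, if desired). Once these two absorptions are in place, the remainder of the argument is the elementary Gr\"onwall-type integration sketched above.
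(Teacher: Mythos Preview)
Your proposal is correct and follows essentially the same approach as the paper: weighted It\^o/energy identity with weight $e^{\gamma t}$, absorption of the $f$-, $g$- and $\sigma$-contributions into the dissipation via \eqref{NSE-dis-f_2}, \eqref{NSE-dis-G4} and Poincar\'e, with the sign closed by \eqref{NSE-dis-def4.1*}, followed by the observation that $e^{-\gamma(t-\tau)}R\to0$. The only cosmetic differences are that the paper works throughout in integrated form and carries out the stopping-time/Fatou localization explicitly (which you correctly flag as the secondary subtlety), and that the paper splits $2(g,u)$ against $\|u\|_H^2$ rather than $\|u\|_V^2$; your ``$2(g,h)$'' is presumably a typo for $2(g,u)$.
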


\begin{proof} Applying It\^{o}'s formula to the process $\|u(t)\|_H^p$ $(p\geq 2)$ and using \eqref{NSE-dis1.-01}, we can obtain that for all $t\geq \tau$,
	\begin{align}\label{3DSTNSeq4.3}
		\begin{split}
			&\,\, \|u(t)\|_H^p+\frac{\nu p}{4}\int_\tau^t \|u(s)\|_H^{p-2}\|u(s)\|_V^2ds+\frac{3\nu \lambda p}{4}\int_\tau^t \|u(s)\|_H^{p}ds\\
			&\leq \|u_\tau\|_H^p+p\int_\tau^t \|u(s)\|_H^{p-2}\left(g(s),u(s)\right)ds +
			p\int_\tau^t \|u(s)\|_H^{p-2}\left(f(u(s),\mathscr{L}_{u(s)},u(s))\right)ds \\
			& + \varepsilon p\int_\tau^t \|u(s)\|_H^{p-2} \left(u(s),\sigma(s,u(s),\mathscr{L}_{u(s)})\right)dW(s)\\
			&+\frac{p(p-1)}{2}\varepsilon ^2\int_\tau^t \|u(s)\|_H^{p-2}\|\sigma(s,u(s),\mathscr{L}_{u(s)})\|^2_{\mathcal{L}_2(\ell^2,H)}ds,
		\end{split}
	\end{align}
	$\mathbb{P}$-almost surely. For each  $m\in \mathbb{N}$, we define a stopping time
	$\tau_m$ given by
	$$
	\tau_m=\inf\{t\geq \tau:  \| u(t) \|_H > m\}.
	$$
	By convention, $\inf \emptyset =+\infty$. 
	From \eqref{3DSTNSeq4.3}, we can infer that for all $t\geq \tau$, it holds
	\begin{align}\label{3DSTNSeq4.5}
		\begin{split}
			&\,\, \mathbb{E}\left[e^{\gamma (t\wedge \tau_m)}\|u(t\wedge \tau_m)\|_H^2\right]+\frac{\nu }{2}\mathbb{E}\left[\int_\tau^{t\wedge \tau_m} e^{\gamma s}\|u(s)\|_V^2ds\right]\\
			&+\left(\nu\lambda-\gamma\right)\mathbb{E}\left[\int_\tau^{t\wedge \tau_m} e^{\gamma s}\|u(s)\|_H^2ds\right]\\
			&\leq e^{\gamma\tau}\mathbb{E}\left[\|u_\tau\|_H^2\right]+\frac{2}{\nu\lambda \gamma}\|g_0\|_{C_b(\mathbb{R},H)}^{2}e^{\gamma t}+
			2\mathbb{E}\left[\int_\tau^{t\wedge \tau_m} e^{\gamma s}\left(f(u(s),\mathscr{L}_{u(s)},u(s))\right)ds\right] \\
			&+\varepsilon ^2\mathbb{E}\left[\int_\tau^{t\wedge \tau_m} e^{\gamma s}\|\sigma(s,u(s),\mathscr{L}_{u(s)})\|^2_{\mathcal{L}_2(\ell^2,H)}ds\right],
		\end{split}
	\end{align}
	where we used the following inequality
	\begin{align*}
		2\int_\tau^{t\wedge \tau_m} e^{\gamma s}\left(g(s),u(s)\right)ds
		\leq& \frac{\nu\lambda  }{2}\int_\tau^{t\wedge \tau_m} e^{\gamma s}\|u(s)\|_H^2ds+\frac{2}{\nu\lambda}\int_\tau^{t\wedge \tau_m} e^{\gamma s}\|g(s)\|_H^{2}ds\\
		\leq& \frac{\nu\lambda  }{2}\int_\tau^{t\wedge \tau_m} e^{\gamma s}\|u(s)\|_H^2ds+\frac{2}{\nu\lambda \gamma}\|g_0\|_{C_b(\mathbb{R},H)}^{2}e^{\gamma t}.
	\end{align*}
	
	In what follows, we estimate the third and fourth terms on the right-hand side of \eqref{3DSTNSeq4.5}. For the third term on the right-hand side of \eqref{3DSTNSeq4.5}, by H\"{o}lder's inequality, Young's inequality and \eqref{NSE-dis-f_2} in $(H_1)$ we have
	\begin{align}\label{3DSTNSeq4.6}
			&\,\,2\mathbb{E}\left[\int_\tau^{t\wedge \tau_m} e^{\gamma s}\left(f(\cdot,u(s),\mathscr{L}_{u(s)}),u(s)\right)ds\right] \nonumber \\
			&\leq 2\mathbb{E}\left[\int_\tau^{t\wedge \tau_m} e^{\gamma s}\int_{\mathcal{O}} \left(|\phi_1(x)||u(s)|+|\phi_1(x)||u(s)|^2+|\psi_1(x)|\sqrt{\mathbb{E}\left[\|u(s)\|_H^2\right]}|u(s)|\right)dxds\right] \nonumber \\
			&\leq \frac{4}{\nu \lambda}\int_\tau^t e^{\gamma s}\|\phi_1\|_{L^2(\mathcal{O})}^2ds+\|\psi_1\|_{L^2(\mathcal{O})}\mathbb{E}\left[\int_\tau^{t\wedge \tau_m} e^{\gamma s}\mathbb{E}\left[\|u(s)\|_H^2\right]ds\right]\nonumber \\
			&+ \mathbb{E}\left[\int_\tau^{t\wedge \tau_m} e^{\gamma s}\left(\frac{\nu\lambda}{4}+2\|\phi_1\|_{L^\infty(\mathcal{O})}+\|\psi_1\|_{L^2(\mathcal{O})}\right)\|u(s)\|_H^2ds\right]\nonumber \\
			&\leq \frac{4|\mathcal{O}|}{\nu \lambda\gamma}\|\phi_1\|_{L^\infty(\mathcal{O})}^2e^{\gamma t}+\frac{\nu\lambda}{4}\mathbb{E}\left[\int_\tau^{t\wedge \tau_m} e^{\gamma s}\|u(s)\|_H^2ds\right]\nonumber \\
			&+ 2\left(\|\phi_1\|_{L^\infty(\mathcal{O})}+\|\psi_1\|_{L^2(\mathcal{O})}\right)\int_\tau^{t} e^{\gamma s}\mathbb{E}\left[\|u(s)\|_H^2\right]ds.
	\end{align}
	For  the fourth term on the right-hand side of \eqref{3DSTNSeq4.5}, by \eqref{NSE-dis-G4} we have 
	\begin{align}\label{3DSTNSeq4.7}
		\begin{split}
			&\,\,\varepsilon ^2\mathbb{E}\left[\int_\tau^{t\wedge \tau_m} e^{\gamma s}\|\sigma(s,u(s),\mathscr{L}_{u(s)})\|^2_{\mathcal{L}_2(\ell^2,H)}ds\right]\\
			&\leq \frac{2}{\gamma}\|h_0\|_{C_b(\mathbb{R},H)}^2e^{\gamma t}+
			8\|\beta\|_{\ell^2}^2\|\kappa\|^2_{L^{\infty}(\mathcal{O})}|\mathcal{O}|\mathbb{E}\left[\int_\tau^t e^{\gamma s}\left(1+\mathbb{E}\left[\|u(s)\|_H^2\right]\right)ds\right]\\
			&+4\|\kappa\|^2_{L^{\infty}(\mathcal{O})}\|\hat{\gamma}\|_{\ell^2}^2\mathbb{E}\left[\int_\tau^{t\wedge \tau_m} e^{\gamma s}\|u(s)\|_H^2 ds\right]\\
			&\leq \frac{2}{\gamma}\|h_0\|_{C_b(\mathbb{R},H)}^2e^{\gamma t}+
			\frac{8|\mathcal{O}|}{\gamma}\|\beta\|_{\ell^2}^2\|\kappa\|^2_{L^{\infty}(\mathcal{O})} e^{\gamma t}\\
			&+4\|\kappa\|^2_{L^{\infty}(\mathcal{O})}\left(2\|\beta\|_{\ell^2}^2|\mathcal{O}|+\|\hat{\gamma}\|_{\ell^2}^2\right)\int_\tau^t e^{\gamma s}\mathbb{E}\left[\|u(s)\|_H^2\right]ds.
		\end{split}
	\end{align} 
	
	Together with \eqref{3DSTNSeq4.5}-\eqref{3DSTNSeq4.7}, for all $t\geq \tau$, it holds
	\begin{align}\label{3DSTNSeq4.8}
		\begin{split}
			&\,\, \mathbb{E}\left[e^{\gamma (t\wedge \tau_m)}\|u(t\wedge \tau_m)\|_H^2\right]+\frac{\nu }{2}\mathbb{E}\left[\int_\tau^{t\wedge \tau_m} e^{\gamma s}\|u(s)\|_V^2ds\right]\\
			&+\left(\frac{3\nu\lambda }{4}-\gamma\right)\mathbb{E}\left[\int_\tau^{t\wedge \tau_m} e^{\gamma s}\|u(s)\|_H^2ds\right]\\
			&\leq e^{\gamma\tau}\mathbb{E}\left[\|u_\tau\|_H^2\right]+\frac{2}{\gamma}\left(\frac{1}{\nu\lambda}\|g_0\|_{C_b(\mathbb{R},H)}^{2}+\|h_0\|_{C_b(\mathbb{R},H)}^2\right)e^{\gamma t}\\
			&+\underbrace{2\left(\|\phi_1\|_{L^\infty(\mathcal{O})}+\|\psi_1\|_{L^2(\mathcal{O})}+2\|\kappa\|^2_{L^{\infty}(\mathcal{O})}\left(2\|\beta\|_{\ell^2}^2|\mathcal{O}|+\|\hat{\gamma}\|_{\ell^2}^2\right)\right)}_{{\bf k}_1}\int_\tau^{t} e^{\gamma s}\mathbb{E}\left[\|u(s)\|_H^2\right]ds\\
			&+\underbrace{\frac{4|\mathcal{O}|}{\gamma}\left(\frac{1}{\nu \lambda}\|\phi_1\|_{L^\infty(\mathcal{O})}^2+
				2\|\beta\|_{\ell^2}^2\|\kappa\|^2_{L^{\infty}(\mathcal{O})}\right)}_{{\bf k}_2} e^{\gamma t}.
		\end{split}
	\end{align} 
	By passing to the limit as $m \to \infty$ in \eqref{3DSTNSeq4.8} and applying Fatou's lemma, we can obtain that
	for all $t\geq \tau$,
	\begin{align*}
		&\,\, e^{\gamma t}\mathbb{E}\left[\|u(t)\|_H^2\right]+\frac{\nu }{2}\int_\tau^{t} e^{\gamma s}\mathbb{E}\left[\|u(s)\|_V^2\right]ds+\left(\frac{\nu\lambda }{4}+\gamma\right)\int_\tau^t e^{\gamma s}\mathbb{E}\left[\|u(s)\|_H^2\right]ds\\
		&\leq e^{\gamma\tau}\mathbb{E}\left[\|u_\tau\|_H^2\right]+\frac{2}{\gamma}\left(\frac{1}{\nu\lambda}\|g_0\|_{C_b(\mathbb{R},H)}^{2}+\|h_0\|_{C_b(\mathbb{R},H)}^2\right)e^{\gamma t}\\
		&+\left(2\gamma-\frac{\nu\lambda }{2}+{\bf k}_1\right)\int_\tau^{t} e^{\gamma s}\mathbb{E}\left[\|u(s)\|_H^2\right]ds+{\bf k}_2e^{\gamma t}\\
		&\leq e^{\gamma\tau}\mathbb{E}\left[\|u_\tau\|_H^2\right]+\frac{2}{\gamma}\left(\frac{1}{\nu\lambda}\|g_0\|_{C_b(\mathbb{R},H)}^{2}+\|h_0\|_{C_b(\mathbb{R},H)}^2\right)e^{\gamma t}\\
		&+\left(2\gamma-\frac{\nu\lambda }{2}+{\bf k}_0\right)\int_\tau^{t} e^{\gamma s}\mathbb{E}\left[\|u(s)\|_H^2\right]ds+{\bf k}_2e^{\gamma t},
	\end{align*} 
	which, together with \eqref{NSE-dis-def4.1}, yields that for all $t\geq \tau$,
	\begin{align}\label{3DSTNSeq4.9}
		\begin{split}
			&\,\, \mathbb{E}\left[\|u(t)\|_H^2\right]+\frac{\nu }{2}\int_\tau^{t} e^{\gamma (s-t)}\mathbb{E}\left[\|u(s)\|_V^2\right]ds+\gamma\int_\tau^t e^{\gamma (s-t)}\mathbb{E}\left[\|u(s)\|_H^2\right]ds\\
			&\leq e^{\gamma(\tau-t)}\mathbb{E}\left[\|u_\tau\|_H^2\right]+\frac{2}{\gamma}\left(\frac{1}{\nu\lambda}\|g_0\|_{C_b(\mathbb{R},H)}^{2}+\|h_0\|_{C_b(\mathbb{R},H)}^2\right)
			+{\bf k}_2.
		\end{split}
	\end{align}

	Thanks to $\mathbb{E}\left[\|u_\tau\|_H^2\right]\leq R$, we have
	$$
	\lim_{t\rightarrow \infty}e^{-\gamma(t-\tau)}\mathbb{E}\left[\|u_\tau\|_H^2\right]\leq \lim_{t\rightarrow \infty}e^{-\gamma(t-\tau)} R = 0,
	$$
	which implies that there exists $T=T(R)>0$ such that for any $t-\tau \geq T$,
	\begin{align}\label{3DSTNSeq4.11}
		e^{-\gamma(t-\tau)}\mathbb{E}\left[\|u_\tau\|_H^2\right]\leq e^{-\gamma(t-\tau)} R\leq \frac{2}{\gamma}\left(\frac{1}{\nu\lambda}\|g_0\|_{C_b(\mathbb{R},H)}^{2}+\|h_0\|_{C_b(\mathbb{R},H)}^2\right). 
	\end{align}
	Let $\mathcal{M}_1=\frac{4}{\gamma}\left(\frac{1}{\nu\lambda}\|g_0\|_{C_b(\mathbb{R},H)}^{2}+\|h_0\|_{C_b(\mathbb{R},H)}^2\right)+{\bf k}_2$, 
	then the desired result can be obtained. This completes the proof.
\end{proof}

As a direct consequence of Lemma \ref{NSE-dis-lemma4.1*}, we have the following two lemmas.

\begin{lemma}\label{NSE-dis-lemma004.2}
	Under the assumptions of Lemma \ref{NSE-dis-lemma4.1*}, there exists $\mathscr{M}>0$ such that for any $\tau, t \in \mathbb{R}$ with $t\geq \tau$, $\varepsilon\in (0,1]$  and $u_{\tau}\in L^2(\Omega,\mathscr{F}_{\tau};H)$, there is the following estimate
	$$
	\int_{\tau}^{t}\mathbb{E}\left[\|u(s,\tau,u_{\tau})\|_V^2\right]ds \leq \mathscr{M}
	$$
	here $\mathscr{M}$ depends on $\gamma, \nu, |\mathcal{O}|, \|\phi_1\|_{L^\infty(\mathcal{O})},\|\beta\|_{\ell^2},\|\kappa\|_{L^{\infty}(\mathcal{O})}, \|\hat{\gamma}\|_{\ell^2}, g_0, h_0,u_\tau$, but does not depend on $\varepsilon, \tau, u_{\tau}$ and $(g,h)\in \Sigma$.  
\end{lemma}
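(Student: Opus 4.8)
The plan is to read off the unweighted $V$-integral from the two estimates already contained in Lemma \ref{NSE-dis-lemma4.1*} and Theorem \ref{NSE-dis-well2.2}, splitting the interval $[\tau,t]$ according to whether $t-\tau$ exceeds the threshold $T$ supplied by Lemma \ref{NSE-dis-lemma4.1*}. First I would fix $u_\tau\in L^2(\Omega,\mathscr{F}_\tau;H)$ and put $R:=\mathbb{E}[\|u_\tau\|_H^2]$, so that Lemma \ref{NSE-dis-lemma4.1*} applies with this value of $R$ and produces a threshold $T=T(R)$ together with the weighted bound $\int_\tau^t e^{\gamma(s-t)}\mathbb{E}[\|u(s)\|_V^2]\,ds\le\mathcal{M}_1$, valid for all $t-\tau\ge T$ (here one drops the nonnegative term $\mathbb{E}[\|u(t)\|_H^2]$ from the left-hand side of Lemma \ref{NSE-dis-lemma4.1*}).

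For the long-time regime $t-\tau\ge T$, the key observation is that the exponential weight is bounded below on the whole interval: for every $s\in[\tau,t]$ one has $e^{\gamma(s-t)}\ge e^{\gamma(\tau-t)}=e^{-\gamma(t-\tau)}$, whence
\[
\int_\tau^t\mathbb{E}[\|u(s)\|_V^2]\,ds\le e^{\gamma(t-\tau)}\int_\tau^t e^{\gamma(s-t)}\mathbb{E}[\|u(s)\|_V^2]\,ds\le e^{\gamma(t-\tau)}\mathcal{M}_1.
\]
For the short-time regime $\tau\le t<\tau+T$, I would simply invoke the well-posedness estimate of Theorem \ref{NSE-dis-well2.2} on $[\tau,\tau+T]$, which bounds $\|u\|_{L^2(\Omega,L^2(\tau,\tau+T;V))}^2$, hence $\int_\tau^t\mathbb{E}[\|u(s)\|_V^2]\,ds$, by $C(1+R)$. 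Taking $\mathscr{M}$ to be the larger of the two resulting constants then yields the claim. If a dependence linear rather than exponential in the interval length is preferred, an equivalent route is to re-integrate the energy equality \eqref{NSE-dis-def2.2}: after localizing by the stopping times $\tau_m$, taking expectations, discarding $\mathbb{E}[\|u(t)\|_H^2]\ge0$, and letting $m\to\infty$ with Fatou's lemma exactly as in the proof of Lemma \ref{NSE-dis-lemma4.1*}, the forcing term (via Young's inequality), the drift (via \eqref{NSE-dis-f_2}), and the diffusion (via \eqref{NSE-dis-G4}, using $\varepsilon\le1$) are each controlled by $C(1+\mathbb{E}[\|u(s)\|_H^2])$, and the uniform-in-time bound $\sup_{s\ge\tau}\mathbb{E}[\|u(s)\|_H^2]\le\mathcal{N}$ (from Lemma \ref{NSE-dis-lemma4.1*} for $s-\tau\ge T$ and Theorem \ref{NSE-dis-well2.2} for $s-\tau<T$) turns the right-hand side into $R+(t-\tau)(C_1+C_2\mathcal{N})$.

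The point requiring the most care is the dependence of $\mathscr{M}$ on the length $t-\tau$. The weight $e^{\gamma(s-t)}$ in Lemma \ref{NSE-dis-lemma4.1*} is precisely what renders the $H$-bound uniform in time; stripping it to recover the unweighted integral necessarily reintroduces the factor $e^{\gamma(t-\tau)}$ (or, through the energy-equality route, a linear factor $t-\tau$), and no bound uniform in $t-\tau$ can hold, since the additive forcing $g$ together with the additive and multiplicative parts of the noise inject energy at a constant rate per unit time that the dissipation balances but cannot drive to zero. Because $(g,h)\in\Sigma$ and the translation identity holds, however, the constant $\mathscr{M}$ depends on the pair $(\tau,t)$ only through $t-\tau$, which is the sense in which it is independent of $\tau$; it does depend on $u_\tau$ through $R=\mathbb{E}[\|u_\tau\|_H^2]$, and hence through $T(R)$, consistent with the dependence list in the statement.
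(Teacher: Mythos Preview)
Your proof is correct. The paper itself gives no proof for this lemma, stating only that it is a ``direct consequence of Lemma \ref{NSE-dis-lemma4.1*}.'' The most natural reading of that remark is to use inequality \eqref{3DSTNSeq4.9}, which is established in the course of proving Lemma \ref{NSE-dis-lemma4.1*} and holds for \emph{all} $t\ge\tau$, not only for $t-\tau\ge T$; stripping the exponential weight there gives
\[
\int_\tau^t\mathbb{E}\bigl[\|u(s)\|_V^2\bigr]\,ds\le \tfrac{2}{\nu}\,e^{\gamma(t-\tau)}\Bigl(\mathbb{E}\bigl[\|u_\tau\|_H^2\bigr]+\tfrac{2}{\gamma}\bigl(\tfrac{1}{\nu\lambda}\|g_0\|_{C_b(\mathbb{R},H)}^2+\|h_0\|_{C_b(\mathbb{R},H)}^2\bigr)+\mathbf{k}_2\Bigr)
\]
directly, with no case split. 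Your short-time/long-time dichotomy and the separate invocation of Theorem \ref{NSE-dis-well2.2} are therefore unnecessary (though harmless): the threshold $T(R)$ in Lemma \ref{NSE-dis-lemma4.1*} is only introduced there to absorb the initial-data term into $\mathcal{M}_1$, whereas the present statement explicitly permits $\mathscr{M}$ to depend on $u_\tau$. Your observation that $\mathscr{M}$ must also depend on $t-\tau$ (either exponentially via the weight-stripping route or linearly via your energy-equality alternative) is correct and clarifies an ambiguity in the statement as printed; this is consistent with the only place the lemma is invoked, namely \eqref{5.13}, where the interval $[\tau,s_1]\subset[\tau,t]$ is of fixed length.
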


\begin{lemma}\label{NSE-dis-lemma4.2}
	Under the assumptions of Lemma \ref{NSE-dis-lemma4.1*}, there exists $T=T(R)>1$ such that for any $t-\tau\geq T$ and $\varepsilon\in (0,1]$, the following estimate
	$$
	\int_{t-1}^{t}\mathbb{E}\left[\|u(s,\tau,u_{\tau})\|_V^2\right]ds \leq \mathcal{M}_2
	$$
	holds for every $u_{\tau}\in L^2(\Omega,\mathscr{F}_{\tau};H)$  with $\mathbb{E}\left[\|u_\tau\|_H^2\right]\leq R$, and $\mathcal{M}_1>0$ is a constant that depends on $\gamma, \nu, |\mathcal{O}|, \|\phi_1\|_{L^\infty(\mathcal{O})},\|\beta\|_{\ell^2},\|\kappa\|_{L^{\infty}(\mathcal{O})}, \|\hat{\gamma}\|_{\ell^2}, g_0, h_0$, but does not depend on $\varepsilon, \tau, u_{\tau}$ and $(g,h)\in \Sigma$.  
\end{lemma}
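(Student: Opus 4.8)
\textbf{Proof proposal for Lemma \ref{NSE-dis-lemma4.2}.}

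The plan is to extract the estimate over the unit window $[t-1,t]$ directly from the weighted integral bound already obtained in Lemma \ref{NSE-dis-lemma4.1*}. The key observation is that the exponential weight $e^{\gamma(s-t)}$ appearing in Lemma \ref{NSE-dis-lemma4.1*} is harmless over a bounded window: for $s\in[t-1,t]$ one has $e^{\gamma(s-t)}\geq e^{-\gamma}$, so that
\begin{align*}
	\int_{t-1}^{t}\mathbb{E}\left[\|u(s,\tau,u_{\tau})\|_V^2\right]ds
	\leq e^{\gamma}\int_{t-1}^{t} e^{\gamma(s-t)}\mathbb{E}\left[\|u(s,\tau,u_{\tau})\|_V^2\right]ds
	\leq e^{\gamma}\int_{\tau}^{t} e^{\gamma(s-t)}\mathbb{E}\left[\|u(s,\tau,u_{\tau})\|_V^2\right]ds,
\end{align*}
where in the last step I used $\gamma>0$, nonnegativity of the integrand, and $\tau\leq t-1$, which is guaranteed by choosing $T=T(R)>1$ so that $t-\tau\geq T>1$.

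Next I would invoke Lemma \ref{NSE-dis-lemma4.1*}: since $\mathbb{E}[\|u_\tau\|_H^2]\leq R$ and $t-\tau\geq T(R)$, that lemma gives
\begin{align*}
	\mathbb{E}\left[\|u(t,\tau,u_{\tau})\|_H^2\right]+\int_{\tau}^t e^{\gamma(s-t)}\mathbb{E}\left[\|u(s,\tau,u_{\tau})\|_V^2\right]ds\leq \mathcal{M}_1,
\end{align*}
so in particular $\int_{\tau}^t e^{\gamma(s-t)}\mathbb{E}[\|u(s,\tau,u_{\tau})\|_V^2]ds\leq \mathcal{M}_1$. Combining with the previous display yields
\begin{align*}
	\int_{t-1}^{t}\mathbb{E}\left[\|u(s,\tau,u_{\tau})\|_V^2\right]ds\leq e^{\gamma}\mathcal{M}_1=:\mathcal{M}_2,
\end{align*}
and since $\mathcal{M}_1$ depends only on $\gamma,\nu,|\mathcal{O}|,\|\phi_1\|_{L^\infty(\mathcal{O})},\|\beta\|_{\ell^2},\|\kappa\|_{L^{\infty}(\mathcal{O})},\|\hat{\gamma}\|_{\ell^2},g_0,h_0$ (and not on $\varepsilon,\tau,u_\tau$ or $(g,h)\in\Sigma$), the same is true of $\mathcal{M}_2$. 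Note I must take $T=T(R)>1$, enlarging the threshold from Lemma \ref{NSE-dis-lemma4.1*} if necessary, precisely so that the lower endpoint $t-1$ of the window lies in $[\tau,t]$; this is the only delicate bookkeeping point, and it is trivial.

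There is essentially no obstacle here: this lemma is a cosmetic repackaging of Lemma \ref{NSE-dis-lemma4.1*}, trading the weight on a bounded interval for a multiplicative constant $e^{\gamma}\leq e^{1/2}$. The only thing worth double-checking is that the integrand is genuinely nonnegative (it is, being an expectation of a squared norm), so that dropping the tail $\int_\tau^{t-1}$ and inserting the extra factor $e^{\gamma}$ both go in the correct direction. I would present the argument in the two displays above followed by one sentence identifying $\mathcal{M}_2=e^{\gamma}\mathcal{M}_1$ and recording its dependencies.
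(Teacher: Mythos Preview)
Your proposal is correct and follows exactly the paper's approach: bound the unweighted integral over $[t-1,t]$ by $e^{\gamma}$ times the weighted integral over $[\tau,t]$, then invoke Lemma \ref{NSE-dis-lemma4.1*} and set $\mathcal{M}_2=e^{\gamma}\mathcal{M}_1$. The paper's proof is the same two-line computation (modulo a visible typo there), and your bookkeeping on $T(R)>1$ is precisely what is needed.
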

\begin{proof}
	Due to
	\begin{align*}
		\int_{t-1}^{t} \mathbb{E} \left[\| u(s,\tau,u_{\tau})\|_V^2\right]ds\leq e^{\gamma}\int_{t-1}^{t}  e^{-\gamma(t-s)}\mathbb{E} \left[\| u(s,\tau,u_{\tau})\|_V^2\right]ds\\
		\leq e^\gamma\int_{\tau}^{t} e^{-\gamma(\tau-s)}\mathbb{E} \left[\| u(s,\tau,u_{\tau})\|_V^2\right]ds,
	\end{align*}
	which, along with Lemma \ref{NSE-dis-lemma4.1*}, concludes the desired result by introducing $\mathcal{M}_2:= e^\gamma\mathcal{M}_1$.
\end{proof}

Next, we give the uniform boundedness of the solution $u$ of problem \eqref{NSE-dis1.-01} in  $L^4(\Omega,\mathscr{F}_{t}; H)$.
\begin{lemma}\label{NSE-dis-lemma4.3}
	Suppose that $(H_1)-(H_3)$, \eqref{NSE-dis-def4.1} and \eqref{NSE-dis-def4.1*} hold. Then, for any $R>0$, there exists $T=T(R)>0$ such that for all $\tau \in \mathbb{R}$, $t-\tau\geq T$ and $\varepsilon\in (0,1]$, the solution $u$ of problem \eqref{NSE-dis1.-01} satisfies the following inequality:
	\begin{align*}
		\mathbb{E}\left[\|u(t,\tau,u_{\tau})\|_H^4\right]+	\int_{\tau}^t e^{-\gamma(t-s)}\mathbb{E}\left[\|u(s,\tau,u_{\tau})\|_H^2\|u(s,\tau,u_{\tau})\|_V^2\right]ds
		\leq \mathcal{M}_3,
	\end{align*}
	where $u_{\tau}\in L^4(\Omega,\mathscr{F}_{\tau};H)$ with $\mathbb{E}\left[\|u_\tau\|_H^4\right]\leq R$, and $\mathcal{M}_3$ is a positive constant independent of $\varepsilon, \tau, u_{\tau}$ and $(g,h)\in \Sigma$, which may depend on  $\gamma, \nu, |\mathcal{O}|, \|\phi_1\|_{L^\infty(\mathcal{O})},\|\beta\|_{\ell^2},\|\kappa\|_{L^{\infty}(\mathcal{O})}, \|\hat{\gamma}\|_{\ell^2}, g_0, h_0$. 
\end{lemma}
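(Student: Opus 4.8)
The plan is to run the energy method at the level $p=4$. Applying It\^o's formula to $e^{\gamma t}\|u(t)\|_H^4$, using the inequality \eqref{3DSTNSeq4.3} with $p=4$ together with the extra term $\gamma e^{\gamma t}\|u(t)\|_H^4$ produced by the weight, and localizing with the stopping times $\tau_m=\inf\{t\ge\tau:\|u(t)\|_H>m\}$ so that the stochastic integral has zero expectation, one obtains for every $t\ge\tau$
\begin{align*}
	&\mathbb{E}\bigl[e^{\gamma(t\wedge\tau_m)}\|u(t)\|_H^4\bigr]
	+\nu\,\mathbb{E}\Bigl[\int_\tau^{t\wedge\tau_m}e^{\gamma s}\|u(s)\|_H^2\|u(s)\|_V^2\,ds\Bigr]
	+(3\nu\lambda-\gamma)\,\mathbb{E}\Bigl[\int_\tau^{t\wedge\tau_m}e^{\gamma s}\|u(s)\|_H^4\,ds\Bigr]\\
	&\quad\le e^{\gamma\tau}\mathbb{E}\bigl[\|u_\tau\|_H^4\bigr]
	+4\,\mathbb{E}\Bigl[\int_\tau^{t\wedge\tau_m}e^{\gamma s}\|u(s)\|_H^2\bigl(g(s),u(s)\bigr)\,ds\Bigr]
	+4\,\mathbb{E}\Bigl[\int_\tau^{t\wedge\tau_m}e^{\gamma s}\|u(s)\|_H^2\bigl(f(\cdot,u(s),\mathscr{L}_{u(s)}),u(s)\bigr)\,ds\Bigr]\\
	&\qquad+6\varepsilon^2\,\mathbb{E}\Bigl[\int_\tau^{t\wedge\tau_m}e^{\gamma s}\|u(s)\|_H^2\|\sigma(s,u(s),\mathscr{L}_{u(s)})\|_{\mathcal{L}_2(\ell^2,H)}^2\,ds\Bigr].
\end{align*}
The aim is then to absorb the $\mathbb{E}[\|u(s)\|_H^4]$-contributions of the last three terms into the dissipative term $(3\nu\lambda-\gamma)\int e^{\gamma s}\mathbb{E}[\|u(s)\|_H^4]\,ds$.

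For the external force, $4\|u(s)\|_H^2\bigl(g(s),u(s)\bigr)\le 4\|g(s)\|_H\|u(s)\|_H^3\le\delta\|u(s)\|_H^4+C_\delta\|g_0\|_{C_b(\mathbb{R},H)}^4$ by Young's inequality. For the drift, \eqref{NSE-dis-f_2} gives $\bigl(f(\cdot,u(s),\mathscr{L}_{u(s)}),u(s)\bigr)\le\|\phi_1\|_{L^\infty(\mathcal{O})}\bigl(|\mathcal{O}|^{1/2}\|u(s)\|_H+\|u(s)\|_H^2\bigr)+\|\psi_1\|_{L^2(\mathcal{O})}\sqrt{\mathbb{E}[\|u(s)\|_H^2]}\,\|u(s)\|_H$, so that multiplying by $4\|u(s)\|_H^2$, taking expectations and applying Young's inequality (treating the last summand via $\sqrt{\mathbb{E}[\|u(s)\|_H^2]}\,\mathbb{E}[\|u(s)\|_H^3]\le\sqrt{\mathbb{E}[\|u(s)\|_H^2]}\,(\mathbb{E}[\|u(s)\|_H^4])^{3/4}$) leaves a quartic term of size $(4\|\phi_1\|_{L^\infty(\mathcal{O})}+\delta)\,\mathbb{E}[\|u(s)\|_H^4]$ together with $C_\delta\,(\mathbb{E}[\|u(s)\|_H^2])^2+C_\delta$. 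For the It\^o correction, \eqref{NSE-dis-G4} and $\varepsilon\le1$ give $6\varepsilon^2\|u(s)\|_H^2\|\sigma\|_{\mathcal{L}_2(\ell^2,H)}^2\le 24\|\kappa\|_{L^\infty(\mathcal{O})}^2\|\hat\gamma\|_{\ell^2}^2\,\|u(s)\|_H^4+C\,\|u(s)\|_H^2+C\,\|u(s)\|_H^2\,\mathbb{E}[\|u(s)\|_H^2]+C$, whose expectation contributes $(\mathbb{E}[\|u(s)\|_H^2])^2$ once more, and all the lower-order $\|u\|_H^2$ and $\|u\|_H^3$ terms can be further reduced to $\delta\|u\|_H^4$ plus a constant. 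Summing up, the coefficient of $\int e^{\gamma s}\mathbb{E}[\|u(s)\|_H^4]\,ds$ appearing on the right-hand side is at most $4\|\phi_1\|_{L^\infty(\mathcal{O})}+24\|\kappa\|_{L^\infty(\mathcal{O})}^2\|\hat\gamma\|_{\ell^2}^2+O(\delta)$; since $\mathbf{k}_0\ge 6\|\phi_1\|_{L^\infty(\mathcal{O})}+8\|\kappa\|_{L^\infty(\mathcal{O})}^2\|\hat\gamma\|_{\ell^2}^2$ and \eqref{NSE-dis-def4.1}--\eqref{NSE-dis-def4.1*} yield $3\nu\lambda>6\mathbf{k}_0$ and $3\nu\lambda>12\gamma$, this coefficient is strictly below $3\nu\lambda-\gamma$ once $\delta$ is small, so the term can be transferred to the left-hand side leaving a positive coefficient $c>0$.

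It remains to bound $\int_\tau^t e^{\gamma s}(\mathbb{E}[\|u(s)\|_H^2])^2\,ds$ and $\int_\tau^t e^{\gamma s}\mathbb{E}[\|u(s)\|_H^2]\,ds$ coming from the distribution dependence; this is where Lemma~\ref{NSE-dis-lemma4.1*} enters, but in the pointwise-in-time form \eqref{3DSTNSeq4.9}, which holds for \emph{every} $t\ge\tau$ and gives $\mathbb{E}[\|u(s)\|_H^2]\le e^{\gamma(\tau-s)}\mathbb{E}[\|u_\tau\|_H^2]+C_0$ with $C_0:=\frac{2}{\gamma}\bigl(\frac{1}{\nu\lambda}\|g_0\|_{C_b(\mathbb{R},H)}^2+\|h_0\|_{C_b(\mathbb{R},H)}^2\bigr)+{\bf k}_2$ depending only on the data. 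Since $\mathbb{E}[\|u_\tau\|_H^2]\le\sqrt{R}$ by Jensen's inequality, squaring and integrating against the weight $e^{\gamma(s-t)}$ gives $\int_\tau^t e^{\gamma(s-t)}(\mathbb{E}[\|u(s)\|_H^2])^2\,ds\le\frac{2R}{\gamma}e^{\gamma(\tau-t)}+\frac{2C_0^2}{\gamma}$, and similarly for the linear term. Passing to the limit $m\to\infty$ via Fatou's lemma, dividing through by $e^{\gamma t}$, and finally choosing $T=T(R)>0$ so large that $e^{\gamma(\tau-t)}\mathbb{E}[\|u_\tau\|_H^4]\le e^{\gamma(\tau-t)}R$ and $\frac{2R}{\gamma}e^{\gamma(\tau-t)}$ are both dominated by a data-only constant whenever $t-\tau\ge T$, we arrive at the asserted bound with $\mathcal{M}_3$ independent of $\varepsilon,\tau,u_\tau$ and of $(g,h)\in\Sigma$. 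I expect the two delicate points to be: (i) the bookkeeping that the quartic coefficient stays strictly below the dissipation rate, which is exactly where assumption \eqref{NSE-dis-def4.1} is used (with ample room); and (ii) the uniform handling of the mean-field terms, for which one must invoke the pointwise-in-time estimate \eqref{3DSTNSeq4.9} rather than the asymptotic statement of Lemma~\ref{NSE-dis-lemma4.1*}, together with the weight $e^{\gamma(s-t)}$ that forces the $R$-dependent part to decay.
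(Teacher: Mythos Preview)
Your argument is correct and follows the same energy-at-level-$p=4$ scheme as the paper, but it departs from the paper's proof in two ways worth noting.

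First, the paper works with the weight $e^{2\gamma t}$ rather than $e^{\gamma t}$, so its dissipative coefficient is $\nu\lambda-2\gamma$ (after peeling off $\nu\lambda$ for the $g$-term) rather than your $3\nu\lambda-\gamma$. This is cosmetic; your choice of weight matches the exponent $e^{-\gamma(t-s)}$ in the statement more directly.

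Second, and more substantively, the paper does \emph{not} invoke the pointwise bound \eqref{3DSTNSeq4.9} to handle the mean-field contributions $(\mathbb{E}[\|u(s)\|_H^2])^2$ arising from $\psi_1$ and from the $\mu(\|\cdot\|_H^2)$-part of $\sigma$. Instead it uses Jensen's inequality $(\mathbb{E}[\|u(s)\|_H^2])^2\le\mathbb{E}[\|u(s)\|_H^4]$ to fold those terms into the same quartic integral, so that after collecting everything the total quartic coefficient is a single constant $\mathbf{k}_4$ (compare \eqref{3DSTNSeq4.18}--\eqref{3DSTNSeq4.19}), which is then asserted to be dominated by $\mathbf{k}_0$ and hence absorbed via \eqref{NSE-dis-def4.1*}. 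Your route---pushing the $\psi_1$ and $\beta$ contributions into $(\mathbb{E}[\|u\|_H^2])^2$ via Young and then controlling that quantity through \eqref{3DSTNSeq4.9}---is more modular: it makes the absorption step cleaner (only $4\|\phi_1\|_{L^\infty}+24\|\kappa\|_{L^\infty}^2\|\hat\gamma\|_{\ell^2}^2$ must be beaten, with plenty of room under \eqref{NSE-dis-def4.1}), at the cost of reaching back into the proof of Lemma~\ref{NSE-dis-lemma4.1*} for the pointwise-in-time estimate. The paper's approach is self-contained but leans harder on the precise form of $\mathbf{k}_0$. Both are legitimate and lead to the same conclusion.
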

\begin{proof}
	From \eqref{3DSTNSeq4.3} we can obtain that for all $t\geq \tau$,
	\begin{align}\label{3DSTNSeq4.16}
		\begin{split}
			&\,\, e^{2\gamma t}\|u(t)\|_H^4+2\nu\int_\tau^t e^{2\gamma s}\|u(s)\|_H^{2}\|u(s)\|_V^2ds+\left(\nu\lambda-2\gamma\right)\int_\tau^t e^{2\gamma s}\|u(s)\|_H^4ds\\
			&\leq e^{2\gamma\tau}\|u_\tau\|_H^4+\frac{27}{2\gamma \nu^3\lambda^3}\|g_0\|_{C_b(\mathbb{R},H)}^4 e^{2\gamma t} +
			4\int_\tau^t e^{2\gamma s}\|u(s)\|_H^{2} \left(f(\cdot,u(s),\mathscr{L}_{u(s)},u(s))\right)ds \\
			& + 4\varepsilon \int_\tau^t e^{2\gamma s}\|u(s)\|_H^2 \left(u(s),\sigma(s,u(s),\mathscr{L}_{u(s)})\right)dW(s)\\
			&+6\varepsilon^2\int_\tau^t e^{2\gamma s}\|u(s)\|_H^2\|\sigma(s,u(s),\mathscr{L}_{u(s)})\|^2_{\mathcal{L}_2(\ell^2,H)}ds,
		\end{split}
	\end{align}
	$\mathbb{P}$-almost surely, here we used the following inequality
	\begin{align*}
		4\int_\tau^t e^{2\gamma s}\|u(s)\|_H^{2}\left(g(s),u(s)\right)ds
		\leq& \nu\lambda\int_\tau^t  e^{2\gamma s}\|u(s)\|_H^4ds+\frac{27}{\nu^3\lambda^3}\int_\tau^t e^{2\gamma s}\|g(s)\|_H^4ds\\
		\leq & \nu\lambda\int_\tau^t  e^{2\gamma s}\|u(s)\|_H^4ds+\frac{27}{2\gamma \nu^3\lambda^3}\|g_0\|_{C_b(\mathbb{R},H)}^4 e^{2\gamma t}.
	\end{align*}
	Setting $\tau_m=\inf\{t\geq \tau:  \| u(t) \|_H > m\}$. Then, by \eqref{3DSTNSeq4.16} we can derive that for all $t\geq \tau$, 
	\begin{align*}
			&\,\, e^{2\gamma (t\wedge \tau_m)}\|u(t\wedge \tau_m)\|_H^4+2\nu\int_\tau^{t\wedge \tau_m} e^{2\gamma s}\|u(s)\|_H^{2}\|u(s)\|_V^2ds+\left(\nu\lambda-2\gamma\right)\int_\tau^{t\wedge \tau_m} e^{2\gamma s}\|u(s)\|_H^4ds\\
			&\leq e^{2\gamma\tau}\|u_\tau\|_H^4+\frac{27}{2\gamma \nu^3\lambda^3}\|g_0\|_{C_b(\mathbb{R},H)}^4 e^{2\gamma ({t\wedge \tau_m})} +
			4\int_\tau^{t\wedge \tau_m} e^{2\gamma s}\|u(s)\|_H^{2} \left(f(\cdot,u(s),\mathscr{L}_{u(s)},u(s))\right)ds \\
			& + 4\varepsilon \int_\tau^{t\wedge \tau_m} e^{2\gamma s}\|u(s)\|_H^2 \left(u(s),\sigma(s,u(s),\mathscr{L}_{u(s)})\right)dW(s)\\
			&+6\varepsilon^2\int_\tau^{t\wedge \tau_m} e^{2\gamma s}\|u(s)\|_H^2\|\sigma(s,u(s),\mathscr{L}_{u(s)})\|^2_{\mathcal{L}_2(\ell^2,H)}ds,
	\end{align*}
	from which we have
	\begin{align}\label{3DSTNSeq4.17}
		\begin{split}
			&\,\, \mathbb{E}\left[e^{2\gamma (t\wedge \tau_m)}\|u(t\wedge \tau_m)\|_H^4\right]+2\nu\mathbb{E}\left[\int_\tau^{t\wedge \tau_m} e^{2\gamma s}\|u(s)\|_H^{2}\|u(s)\|_V^2ds\right]\\
			&+\left(\nu\lambda-2\gamma\right)\mathbb{E}\left[\int_\tau^{t\wedge \tau_m} e^{2\gamma s}\|u(s)\|_H^4ds\right]\\
			&\leq \mathbb{E}\left[e^{2\gamma\tau}\|u_\tau\|_H^4\right]+\frac{27}{2\gamma \nu^3\lambda^3}\|g_0\|_{C_b(\mathbb{R},H)}^4 e^{2\gamma t}\\
			& +
			4\mathbb{E}\left[\int_\tau^{t\wedge \tau_m} e^{2\gamma s}\|u(s)\|_H^{2} \left(f(\cdot,u(s),\mathscr{L}_{u(s)},u(s))\right)ds\right] \\
			&+6\varepsilon^2\mathbb{E}\left[\int_\tau^{t\wedge \tau_m} e^{2\gamma s}\|u(s)\|_H^2\|\sigma(s,u(s),\mathscr{L}_{u(s)})\|^2_{\mathcal{L}_2(\ell^2,H)}ds\right].
		\end{split}
	\end{align}
	For  the third term on the right-hand side of \eqref{3DSTNSeq4.17}, by \eqref{NSE-dis-f_2} we have
	\begin{align}\label{3DSTNSeq4.18}
			&\,\,4\mathbb{E}\left[\int_\tau^{t\wedge \tau_m} e^{2\gamma s}\|u(s)\|_H^{2} \left(f(\cdot,u(s),\mathscr{L}_{u(s)},u(s))\right)ds\right] \nonumber \\
			&\leq 4\mathbb{E}\left[\int_\tau^{t\wedge \tau_m} e^{2\gamma s}\|\phi_1\|_{L^2(\mathcal{O})}\|u(s)\|_H^{3} ds\right]+4\mathbb{E}\left[\int_\tau^{t\wedge \tau_m} e^{2\gamma s}\|\phi_1\|_{L^\infty(\mathcal{O})}\|u(s)\|_H^4ds\right]\nonumber\\
			&+ 4\mathbb{E}\left[\int_\tau^{t\wedge \tau_m} e^{2\gamma s}\|\psi_1\|_{L^2(\mathcal{O})}\|u(s)\|_H^{3}\mathbb{E}\left[\|u(s)\|_H^2\right]ds\right]\nonumber\\
			&\leq \frac{\nu \lambda}{4}\mathbb{E}\left[\int_\tau^{t\wedge \tau_m} e^{2\gamma s}\|u(s)\|_H^4ds\right]
			+\frac{27\cdot 64}{ \nu^3\lambda ^3}\mathbb{E}\left[\int_\tau^{t\wedge \tau_m} e^{2\gamma s}\|\phi_1\|_{L^2(\mathcal{O})}^4ds\right]\nonumber\\
			&+4\|\phi_1\|_{L^\infty(\mathcal{O})}\mathbb{E}\left[\int_\tau^{t\wedge \tau_m} e^{2\gamma s}\|u(s)\|_H^4ds\right]+2\|\psi_1\|_{L^2(\mathcal{O})}\mathbb{E}\left[\int_\tau^{t\wedge \tau_m} e^{2\gamma s}\left[\|u(s)\|_H^4\right]ds\right]\nonumber\\ &+2\|\psi_1\|_{L^2(\mathcal{O})}\mathbb{E}\left[\int_\tau^{t\wedge \tau_m} e^{2\gamma s}\|u(s)\|_H^2\mathbb{E}\left[\|u(s)\|_H^2\right]ds\right]\nonumber\\
			&\leq \left(\frac{\nu \lambda}{4}+4\|\phi_1\|_{L^\infty(\mathcal{O})}+2\|\psi_1\|_{L^2(\mathcal{O})}\right)\mathbb{E}\left[\int_\tau^{t\wedge \tau_m} e^{2\gamma s}\|u(s)\|_H^4ds\right]\nonumber\\
			&+2\|\psi_1\|_{L^2(\mathcal{O})}\int_\tau^{t} e^{2\gamma s}\left(\mathbb{E}\left[\|u(s)\|_H^2\right]\right)^2ds+\frac{864}{\gamma \nu^3\lambda ^3}\|\phi_1\|_{L^\infty(\mathcal{O})}^4 |\mathcal{O}|^2e^{2\gamma t}\nonumber\\
			&\leq \left(\frac{\nu \lambda}{4}+4\|\phi_1\|_{L^\infty(\mathcal{O})}+4\|\psi_1\|_{L^2(\mathcal{O})}\right)\int_\tau^{t} e^{2\gamma s}\mathbb{E}\left[\|u(s)\|_H^4\right]ds+\frac{864}{\gamma \nu^3\lambda ^3}\|\phi_1\|_{L^\infty(\mathcal{O})}^4 |\mathcal{O}|^2e^{2\gamma t}.
	\end{align}
	For  the last term on the right-hand side of \eqref{3DSTNSeq4.17}, it follows from \eqref{NSE-dis-G4} that
	\begin{align}\label{3DSTNSeq4.19}
		\begin{split}
			&\,\,6\varepsilon^2\mathbb{E}\left[\int_\tau^{t\wedge \tau_m} e^{2\gamma s}\|u(s)\|_H^2\|\sigma(s,u(s),\mathscr{L}_{u(s)})\|^2_{\mathcal{L}_2(\ell^2,H)}ds\right]\\
			&\leq \frac{144}{\nu \lambda}\mathbb{E}\left[\int_\tau^{t\wedge \tau_m}
			e^{2\gamma s}\|h_0\|_{C_b(\mathbb{R},H)}^4ds\right]
			+24\varepsilon^2 \mathbb{E}\left[\int_\tau^{t\wedge \tau_m}
			e^{2\gamma s}\|\beta\|_{\ell^2}^2\|\kappa\|^4_{L^\infty(\mathcal{O})}|\mathcal{O}|^2ds\right]\\
			&+\left(\frac{\nu \lambda}{4}
			+24\varepsilon^2\|\beta\|_{\ell^2}^2+24\varepsilon^2\|\kappa\|^2_{L^\infty(\mathcal{O})}\|\hat{\gamma}\|_{\ell^2}^2\right)\mathbb{E}\left[\int_\tau^{t\wedge \tau_m}
			e^{2\gamma s}\|u(s)\|_H^4ds\right]\\
			&+48\varepsilon^2\|\beta\|_{\ell^2}^2\|\kappa\|^2_{L^\infty(\mathcal{O})}|\mathcal{O}|\mathbb{E}\left[\int_\tau^{t\wedge \tau_m}
			e^{2\gamma s}\|u(s)\|_H^2\mathbb{E}\left[\|u(s)\|_H^2\right]ds\right]\\
			&\leq \left(\frac{72}{\gamma\nu \lambda}\|h_0\|_{C_b(\mathbb{R},H)}^4
			+\frac{12}{\gamma}\|\beta\|_{\ell^2}^2\|\kappa\|^4_{L^\infty(\mathcal{O})}|\mathcal{O}|^2\right) e^{2\gamma t}\\
			&+\left(\frac{\nu \lambda}{4}
			+24\|\beta\|_{\ell^2}^2+24\|\kappa\|^2_{L^\infty(\mathcal{O})}\|\hat{\gamma}\|_{\ell^2}^2+48\|\beta\|_{\ell^2}^2\|\kappa\|^2_{L^\infty(\mathcal{O})}|\mathcal{O}|\right)\int_\tau^{t}
			e^{2\gamma s}\mathbb{E}\left[\|u(s)\|_H^4\right]ds.
		\end{split}
	\end{align} 
	
	By \eqref{3DSTNSeq4.17}-\eqref{3DSTNSeq4.19} we have
	{\small
	\begin{align*}
				&\,\, \mathbb{E}\left[e^{2\gamma (t\wedge \tau_m)}\|u(t\wedge \tau_m)\|_H^4\right]+2\nu\mathbb{E}\left[\int_\tau^{t\wedge \tau_m} e^{2\gamma s}\|u(s)\|_H^{2}\|u(s)\|_V^2ds\right]\\
			&+\left(\frac{\nu\lambda}{2}-2\gamma\right)\mathbb{E}\left[\int_\tau^{t\wedge \tau_m} e^{2\gamma s}\|u(s)\|_H^4ds\right]
			\leq e^{2\gamma\tau}\mathbb{E}\left[\|u_\tau\|_H^4\right]\\
			&+\underbrace{\left(\frac{27}{2\gamma \nu^3\lambda^3}\|g_0\|_{C_b(\mathbb{R},H)}^4 +\frac{864}{\gamma \nu^3\lambda ^3}\|\phi_1\|_{L^\infty(\mathcal{O})}^4 |\mathcal{O}|^2+\frac{72}{\gamma\nu \lambda}\|h_0\|_{C_b(\mathbb{R},H)}^4
			+\frac{12}{\gamma}\|\beta\|_{\ell^2}^2\|\kappa\|^4_{L^\infty(\mathcal{O})}|\mathcal{O}|^2\right)}_{{\bf k}_3} e^{2\gamma t}\\
			& +\underbrace{4\left(\|\phi_1\|_{L^\infty(\mathcal{O})}+\|\psi_1\|_{L^2(\mathcal{O})}+6\|\beta\|_{\ell^2}^2+6\|\kappa\|^2_{L^\infty(\mathcal{O})}\|\hat{\gamma}\|_{\ell^2}^2+12\|\beta\|_{\ell^2}^2\|\kappa\|^2_{L^\infty(\mathcal{O})}|\mathcal{O}|\right)}_{{\bf k}_4}\int_\tau^{t}
			e^{2\gamma s}\mathbb{E}\left[\|u(s)\|_H^4\right]ds.
	\end{align*}
}

	Taking the limit as $m \rightarrow \infty$ in the above inequality and applying Fatou's lemma, we find that for all $t \geq \tau$, it holds that
	\begin{align*}
		\begin{split}
			& e^{2\gamma t}\mathbb{E}\left[\|u(t)\|_H^4\right]+2\nu\int_\tau^{t} e^{2\gamma s}\mathbb{E}\left[\|u(s)\|_H^{2}\|u(s)\|_V^2\right]ds+\gamma\int_\tau^{t} e^{2\gamma s}\mathbb{E}\left[\|u(s)\|_H^4\right]ds\\
			\leq& e^{2\gamma\tau}\mathbb{E}\left[\|u_\tau\|_H^4\right]+ {\bf k}_3e^{2\gamma t}+\left(2\gamma-\frac{\nu\lambda}{2}+{\bf k}_4\right)\int_\tau^{t}
			e^{2\gamma s}\mathbb{E}\left[\|u(s)\|_H^4\right]ds\\
			\leq& e^{2\gamma\tau}\mathbb{E}\left[\|u_\tau\|_H^4\right]+ {\bf k}_3e^{2\gamma t}+\left(2\gamma-\frac{\nu\lambda}{2}+{\bf k}_0\right)\int_\tau^{t}
			e^{2\gamma s}\mathbb{E}\left[\|u(s)\|_H^4\right]ds,
		\end{split}
	\end{align*}
	which, along with \eqref{NSE-dis-def4.1}, can get that for all $t\geq \tau$,
	\begin{align}\label{3DSTNSeq4.20}
		\begin{split}
			&\,\, \mathbb{E}\left[\|u(t)\|_H^4\right]+2\nu\int_\tau^{t} e^{2\gamma (s-t)}\mathbb{E}\left[\|u(s)\|_H^{2}\|u(s)\|_V^2\right]ds\leq e^{-2\gamma(t-\tau)}\mathbb{E}\left[\|u_\tau\|_H^4\right]+ {\bf k}_3.
		\end{split}
	\end{align}
	
	Note that $\mathbb{E}\left[\|u_\tau\|_H^4\right]\leq R$, then it holds
	$$
	e^{-2\gamma(t-\tau)}\mathbb{E}\left[\|u_\tau\|_H^4\right]\leq e^{-2\gamma(t-\tau)}R\rightarrow 0, \text{~~as~~} t\rightarrow \infty,
	$$
	and hence there exists $T=T(R)> 0$ such that for all $t-\tau\geq T$, 
	$$
		e^{-2\gamma(t-\tau)}\mathbb{E}\left[\|u_\tau\|_H^4\right]\leq {\bf k}_3,
	$$
	which, together with \eqref{3DSTNSeq4.20}, concludes the desired conclusion. This completes the proof.
\end{proof}

Now, we derive the long-time uniform regularity estimates of solution $u$ of problem \eqref{NSE-dis1.-01} in $L^2(\Omega,\mathscr{F}_{t}; V)$.
\begin{lemma}\label{NSE-dis-lemma4.5}
	Suppose that $(H_1)-(H_3)$, \eqref{NSE-dis-def4.1} and \eqref{NSE-dis-def4.1*} hold. Then, for every $R>0$, there exists $T=T(R)>1$ such that for all $\tau \in \mathbb{R}$, $t-\tau\geq T$ and $\varepsilon\in (0,1]$, the solution $u$ of problem \eqref{NSE-dis1.-01} satisfies
		\begin{align*}
			\mathbb{E}\left[\mathscr{G}(t,\tau,u_{\tau})\| u(t,\tau,u_{\tau})\|_V^2\right]\leq \mathcal{M}_4
		\end{align*}
		with
		\begin{align}\label{NSE-disghjmathfrak{F}}
			\mathscr{G}(t,\tau,u_\tau)=e^{-\frac{27\mathfrak{C}^4}{2\nu^3}\int_\tau^t \|u(s,\tau,u_\tau)\|_H^2 \|u(s,\tau,u_\tau)\|_V^2ds},
		\end{align}
		where $u_{\tau}\in L^2(\Omega,\mathscr{F}_{\tau};H)$  with $\mathbb{E}\left[\|u_\tau\|_H^2\right]\leq R$, $\gamma>0$ is the same number as in \eqref{NSE-dis-def4.1*}. Particularly, here $\mathcal{M}_4>0 $ is a constant that depends on $\gamma, \nu,{\bf \hat{c}}, |\mathcal{O}|$, $\|\phi_1\|_{L^\infty(\mathcal{O})}$, $\|\beta\|_{\ell^2}$, $\|\kappa\|_{L^{\infty}(\mathcal{O})}$, $\|\nabla\kappa\|_{L^{\infty}(\mathcal{O})}$, $\|\hat{\gamma}\|_{\ell^2}, \|L\|_{\ell^2}, g_0, h_0$, but does not depend on $\varepsilon, \tau, u_{\tau}$ and $(g,h)\in \Sigma$. 
	\end{lemma}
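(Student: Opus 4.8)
The plan is to apply the Itô formula to the process $\|u(t)\|_V^2$ multiplied by the random weight $\mathscr{G}(t,\tau,u_\tau)$, and to exploit the fact that the weight $\mathscr{G}$ is precisely engineered to absorb the worst nonlinear term coming from $B(u,u)$. First I would recall that $\mathscr{G}(t,\tau,u_\tau)$ is a process of bounded variation (it is $C^1$ in $t$ with $\tfrac{d}{dt}\mathscr{G}=-\tfrac{27\mathfrak{C}^4}{2\nu^3}\|u\|_H^2\|u\|_V^2\,\mathscr{G}$), so the product rule gives no extra quadratic covariation; applying Itô to $\mathscr{G}(t)\|u(t)\|_V^2$ then produces the term $-\tfrac{27\mathfrak{C}^4}{2\nu^3}\mathscr{G}\|u\|_H^2\|u\|_V^4$ together with $\mathscr{G}$ times the usual differential of $\|u\|_V^2$. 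In that differential, testing \eqref{NSE-dis1.-01} with $Au$ yields the dissipation $-2\nu\mathscr{G}\|Au\|_H^2$ and the trilinear term $-2\mathscr{G}\langle B(u,u),Au\rangle$, which by \eqref{NSE-dis3.2} (or the standard 2D estimate $|\langle B(u,u),Au\rangle|\le \mathfrak{C}\|u\|_H^{1/2}\|u\|_V\|Au\|_H^{3/2}$) and Young's inequality is bounded by $\nu\mathscr{G}\|Au\|_H^2+\tfrac{27\mathfrak{C}^4}{16\nu^3}\mathscr{G}\|u\|_H^2\|u\|_V^4$. The constant $\tfrac{27\mathfrak{C}^4}{2\nu^3}$ in the exponent of $\mathscr{G}$ is chosen (with room to spare) so that this bad term is dominated by the negative contribution from $\tfrac{d}{dt}\mathscr{G}$, after which the remaining $\nu\mathscr{G}\|Au\|_H^2$ controls everything else via $\|u\|_V^2\le \lambda^{-1/2}\|u\|_H^{?}$–type interpolation; more simply one keeps a fraction of $\nu\mathscr{G}\|Au\|_H^2\ge \nu\lambda\mathscr{G}\|u\|_V^2$ for a Gronwall-type absorption in $\|u\|_V^2$ itself.

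Next I would estimate the remaining forcing and nonlinear contributions after multiplying by an auxiliary exponential weight $e^{\gamma s}$ (to make the estimate uniform in the initial time $\tau$, exactly as in Lemmas \ref{NSE-dis-lemma4.1*} and \ref{NSE-dis-lemma4.3}). The term $2\mathscr{G}(g(s),Au)$ is handled by Young's inequality against $\nu\mathscr{G}\|Au\|_H^2$, costing $\tfrac{1}{\nu}\mathscr{G}\|g\|_H^2\le \tfrac{1}{\nu}\|g_0\|_{C_b(\mathbb R,H)}^2\mathscr{G}$. The term $2\mathscr{G}(f(x,u,\mathscr L_u),Au)$ is bounded using $(H_1)$, specifically \eqref{NSE-dis-f_2}: $|(f,Au)|\le \|\phi_1\|_{L^\infty}(|\mathcal O|^{1/2}+\|u\|_H)\|Au\|_H+\|\psi_1\|_{L^2}\sqrt{\mathbb E[\|u\|_H^2]}\,\|Au\|_H$, again absorbed into $\nu\mathscr{G}\|Au\|_H^2$ at the cost of terms linear in $\mathscr{G}(1+\|u\|_H^2+\mathbb E[\|u\|_H^2])$. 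For the stochastic integral I would introduce the stopping times $\tau_m=\inf\{t\ge\tau:\|u(t)\|_V>m\}$ to kill the martingale in expectation, and handle the Itô correction $\varepsilon^2\mathscr{G}\sum_k\|(h+\kappa\sigma_k)\zeta\|$-type term; here one must be careful that testing the noise against $Au$ produces $\varepsilon^2\mathscr{G}\sum_k\|\mathscr P(\partial_j(h+\kappa\sigma_k(t,u,\mathscr L_u)))\|_H^2$-type quantities, which is exactly why hypothesis $(H_2)$ imposes $\int_t^{t+1}\|\nabla h(s)\|_H^2ds\le\hat c$ and why $\kappa\in W^{1,\infty}$ and \eqref{NSE-dis-G3} (the uniform bound $L_k$ on $\partial_u\sigma_k$) are needed: the gradient of $\sigma_k(t,u(t,x),\mathscr L_{u(t)})$ in $x$ is $\partial_u\sigma_k\cdot\nabla u$, contributing $\sum_k L_k^2\|\nabla u\|_H^2=\|L\|_{\ell^2}^2\|u\|_V^2$, which is absorbed by the surviving $\nu\lambda\mathscr{G}\|u\|_V^2$ fraction (this is where the smallness-of-$\nu$-type condition \eqref{NSE-dis-def4.1} should again be invoked, possibly in a strengthened form already built into the constants).

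Assembling these bounds, taking expectations, letting $m\to\infty$ with Fatou's lemma, and crucially using that $0<\mathscr{G}(s,\tau,u_\tau)\le 1$ (so that $\mathbb E[\mathscr{G}(s)\|u(s)\|_H^2]\le\mathbb E[\|u(s)\|_H^2]$, etc.) together with the already-established uniform bounds from Lemma \ref{NSE-dis-lemma4.1*} (for $\int e^{\gamma(s-t)}\mathbb E\|u(s)\|_V^2$) and Lemma \ref{NSE-dis-lemma4.3} (for $\mathbb E\|u(s)\|_H^4$ and the weighted integral of $\|u\|_H^2\|u\|_V^2$), one arrives at a differential inequality of the form $\tfrac{d}{ds}\big(e^{\gamma s}\mathbb E[\mathscr{G}(s)\|u(s)\|_V^2]\big)\le C e^{\gamma s}\big(1+\text{quantities already controlled}\big)$, integrated from $\tau$ to $t$. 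Dividing by $e^{\gamma t}$ gives $\mathbb E[\mathscr{G}(t)\|u(t)\|_V^2]\le e^{-\gamma(t-\tau)}\mathbb E[\|u_\tau\|_V^2]+\mathcal M_4$; however, since $u_\tau$ is only assumed in $L^2(\Omega,\mathscr F_\tau;H)$ and not in $V$, one cannot keep the $\mathbb E[\|u_\tau\|_V^2]$ term — instead I would first integrate the differential inequality on a unit interval to get a bound on $\int_{s}^{s+1}e^{\gamma r}\mathbb E[\mathscr G(r)\|u(r)\|_V^2]dr$ purely from the $H$-data (this integrated-$V$-bound is available from Lemma \ref{NSE-dis-lemma4.2}), then apply the uniform Gronwall lemma (integral form) to upgrade it to the pointwise-in-$t$ bound for $t-\tau\ge T(R)\ge 1$.

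\textbf{Main obstacle.} The delicate point is the treatment of the noise term tested against $Au$: one must verify that all the $x$-derivatives falling on $h$, on $\kappa$, and on $\sigma_k(t,u(t,x),\mathscr L_{u(t)})$ produce only terms controllable by $\hat c$, by $\|\nabla\kappa\|_{L^\infty}$, and by $\|L\|_{\ell^2}^2\|u\|_V^2$ respectively — and that the last of these can genuinely be absorbed into the dissipation, which is exactly the role of the structural condition \eqref{NSE-dis-def4.1} (ensuring $\nu\lambda$ is large enough relative to the noise constants). The second subtlety is purely technical but unavoidable: because $u_\tau\notin V$ in general, the weight $\mathscr{G}$ and the exponential weight $e^{\gamma s}$ must be combined with the uniform (integral) Gronwall inequality rather than a naive pointwise Gronwall, so that the final constant $\mathcal M_4$ depends only on $\mathbb E[\|u_\tau\|_H^2]\le R$ and on the listed structural data, and not on $\tau$, on $\varepsilon$, or on $(g,h)\in\Sigma$.
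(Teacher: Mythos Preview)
Your proposal is essentially correct and identifies all the key ingredients the paper uses: the It\^o formula for $\mathscr{G}(t)\|u(t)\|_V^2$, the exact cancellation of the trilinear contribution $\tfrac{27\mathfrak{C}^4}{2\nu^3}\mathscr{G}\|u\|_H^2\|u\|_V^4$ by the derivative of $\mathscr{G}$, the term-by-term estimates of $g$, $f$, and the noise correction via $(H_1)$--$(H_3)$ (including the gradient decomposition $\nabla(\kappa\sigma_k)=\nabla\kappa\,\sigma_k+\kappa\,\partial_u\sigma_k\,\nabla u$, which is precisely the paper's \eqref{NSE-disghj4.19}), and the necessity of a uniform-Gronwall step because $u_\tau\notin V$.

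The paper's execution differs from yours in two simplifying respects. First, it never introduces the exponential weight $e^{\gamma s}$ nor integrates from $\tau$: it writes the It\^o identity on $[\varrho,t]$ for an arbitrary $\varrho\in(t-1,t)$, derives an inequality for $\mathbb{E}[\mathscr{G}(t)\|u(t)\|_V^2]$ in terms of $\mathbb{E}[\mathscr{G}(\varrho)\|u(\varrho)\|_V^2]$ plus integrals over $[\varrho,t]\subset[t-1,t]$, and then simply integrates in $\varrho$ over $(t-1,t)$; the troublesome initial term becomes $\int_{t-1}^t\mathbb{E}[\|u(\varrho)\|_V^2]\,d\varrho$, which is exactly Lemma~\ref{NSE-dis-lemma4.2}. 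Second, no absorption of the noise term $\|L\|_{\ell^2}^2\|\kappa\|_{L^\infty}^2\|u\|_V^2$ into the dissipation (and hence no further appeal to \eqref{NSE-dis-def4.1}) is required: since $\mathscr{G}\le 1$, every integral $\int_\varrho^t\mathscr{G}\|u\|_V^2\,ds$ is bounded directly by $\int_{t-1}^t\mathbb{E}[\|u\|_V^2]\,ds$. Your route would work but carries unnecessary bookkeeping; the paper's averaging-in-$\varrho$ is the cleaner realization of the same uniform-Gronwall idea you correctly anticipated.
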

\begin{proof} 
	We shall derive the long-time uniform estimate of the solution in a formal manner, which can be rigorously justified via a limiting argument.
	By \eqref{NSE-dis1.-01} and It\^{o}'s formula, we can get that for all $\tau\in \mathbb{R}$, $t-\tau>1$ and $\varrho \in (t-1,t)$,
	\begin{align}\label{NSE-disghj4.15}
		\begin{split}
			&\,\, \| u(t,\tau,u_{\tau})\|_V^2
			+2\nu\int_\varrho^t \|A u(s,\tau,u_{\tau})\|_{H}^2 ds=\| u(\varrho,\tau,u_{\tau})\|_V^2\\
			&+2\int_\varrho^t \left(g(s),Au(s,\tau,u_{\tau})\right)ds+2\int_\varrho^t  \left(f(\cdot,u(s,\tau,u_{\tau}),\mathscr{L}_{u(s,\tau,u_{\tau})}),Au(s,\tau,u_{\tau})\right)ds\\
			&-2\int_\varrho^t  \langle B(u(s,\tau,u_{\tau}),u(s,\tau,u_{\tau})), Au(s,\tau,u_{\tau})\rangle ds\\
			&+2\varepsilon\int_\varrho^t  \left(Au(s,\tau,u_{\tau}),\sigma(s,u(s,\tau,u_{\tau}),\mathscr{L}_{u(s,\tau,u_{\tau})})\right)d W(s)\\
			&+\varepsilon^2 \int_\varrho^t \|\nabla \sigma(s,u(s,\tau,u_{\tau}),\mathscr{L}_{u(s,\tau,u_{\tau})})\|_{\mathcal{L}_2(\ell^2,H)}^2ds.
		\end{split}
	\end{align}
	
	We now handle the right-hand side terms of \eqref{NSE-disghj4.15}. For  the second term on the right-hand side of \eqref{NSE-disghj4.15}, it is easy to obtain that
		\begin{align}\label{NSE-disghj4.16}
		\begin{split}
			2\int_\varrho^t \left(g(s),Au(s,\tau,u_{\tau})\right)ds\leq \frac{\nu}{4} \int_\varrho^t  \|A u(s,\tau,u_{\tau})\|^2_{H} ds + \frac{4}{\nu}\int_\varrho^t  \|g(s)\|_{H}^2ds.
		\end{split}
	\end{align}
	For the third term on the right-hand side of \eqref{NSE-disghj4.15}, by H\"{o}lder's inequality, Young's inequality and \eqref{NSE-dis-f_2} we have
	\begin{align}\label{NSE-disghj4.17}
		&2\int_\varrho^t  \left(f(\cdot,u(s,\tau,u_{\tau}),\mathscr{L}_{u(s,\tau,u_{\tau})}),Au(s,\tau,u_{\tau})\right)ds\nonumber\\
		\leq&  \frac{\nu}{4}\int_\varrho^t  \|Au(s,\tau,u_{\tau})\|_{H}^2ds+\frac{4}{\nu}\int_\varrho^t  \|f(x,u(s,\tau,u_{\tau}),\mathscr{L}_{u(s,\tau,u_{\tau})})\|_{H}^2ds\nonumber\\
		\leq& \frac{\nu}{4}\int_\varrho^t  \|Au(s,\tau,u_{\tau})\|_{H}^2ds+\frac{8}{\nu}\int_\varrho^t \int_{\mathcal{O}} \left|\phi_1(x)\right|^2(1+|u(s,\tau,u_{\tau})|)^2dxds\nonumber\\
		&+\frac{8}{\nu}\int_\varrho^t \int_{\mathcal{O}}\left|\psi_1 (x)\right|^2 {\mathbb{E}(\|u(s,\tau,u_{\tau})\|_{H}^2)}dxds\nonumber\\
		\leq& \frac{\nu}{4}\int_\varrho^t  \|Au(s,\tau,u_{\tau})\|_{H}^2ds
		+\frac{16}{\nu\lambda}\left\|\phi_1\right\|_{L^\infty(\mathcal{O})}\int_\varrho^t \|u(s,\tau,u_{\tau})\|_V^2ds\nonumber\\
		& +\frac{8}{\nu}\left\|\psi_1\right\|^2_{L^2(\mathcal{O})}\int_\varrho^t \mathbb{E}(\|u(s,\tau,u_{\tau})\|_H^2)ds+\frac{16}{\nu}|\mathcal{O}|\|\phi_1\|_{L^\infty(\mathcal{O})}^2.
	\end{align}
	For the fourth term on the right-hand side of \eqref{NSE-disghj4.15}, by \eqref{NSE-dis3.2} and Young's inequality we get
	\begin{align}\label{NSE-disghj4.18}
		\begin{split}
			&-2\int_\varrho^t  \langle B(u(s,\tau,u_{\tau}),u(s,\tau,u_{\tau})), Au(s,\tau,u_{\tau})\rangle ds\\
			\leq& \frac{\nu}{2}\int_\varrho^t  \|Au(s,\tau,u_{\tau})\|_{H}^2ds+\frac{27\mathfrak{C}^4}{2\nu^3}\int_\varrho^t  \|u(s,\tau,u_{\tau})\|_H^2 \| u(s,\tau,u_{\tau})\|_V^{4}ds.
		\end{split}
	\end{align}
	For the sixth term on the right-hand side of \eqref{NSE-disghj4.15}. By \eqref{NSE-dis-gh_03}, \eqref{NSE-dis-G1} and \eqref{NSE-dis-G3} we have
	\begin{align}\label{NSE-disghj4.19}
			&\varepsilon^2 \int_\varrho^t \|\nabla \sigma(s,u(s,\tau,u_{\tau}),\mathscr{L}_{u(s,\tau,u_{\tau})})\|_{\mathcal{L}_2(\ell^2,H)}^2ds\nonumber \\
			\leq&  2\varepsilon^2 \int_\varrho^t  \|\nabla h(s)\|_H^2 ds+
			2\varepsilon^2 \sum_{k=1}^\infty \int_\varrho^t  \int_{\mathcal{O}}  \left|\nabla\kappa(x)\right|^2 \left|\sigma_k(s,u(s,\tau,u_{\tau}),\mathscr{L}_{u(s,\tau,u_{\tau})})\right|^2 dxds\nonumber \\
			&+2\varepsilon^2 \sum_{k=1}^\infty \int_\varrho^t  \int_{\mathcal{O}}  \left|\kappa(x)\right|^2 \left|\nabla u(s,\tau,u_{\tau})\right|^2 \left|\frac{\partial\sigma_k}{\partial u}(s,u(s,\tau,u_{\tau}),\mathscr{L}_{u(s,\tau,u_{\tau})})\right|^2 dxds\nonumber \\
			\leq& 2\varepsilon^2 \int_{t-1}^t  \|\nabla h(s)\|_H^2 ds+2\varepsilon^2 \left(\sum_{k=1}^\infty L_k^2\right) \int_\varrho^t \int_{\mathcal{O}} \left|\kappa(x)\right|^2\left|\nabla u(s,\tau,u_{\tau})\right|^2dxds\nonumber \\
			&+
			4\varepsilon^2 \sum_{k=1}^\infty \int_\varrho^t  \int_{\mathcal{O}}  \left|\nabla\kappa(x)\right|^2 \left(\beta_{k}^2\left(1+\sqrt{\mathbb{E}\left[\|u(s,\tau,u_{\tau})\|_H^2\right]}\right)^2+\hat{\gamma}_k^2|u(s,\tau,u_{\tau})|^2\right) dxds\nonumber \\
			\leq& 2{\bf \hat{c}}+2 \|L\|_{\ell^2}^2 \|\kappa\|_{L^\infty(\mathcal{O})}^2 \int_\varrho^t \|\nabla u(s,\tau,u_{\tau})\|_H^2ds+8\|\beta\|_{\ell^2}^2\|\nabla\kappa\|_{L^2(\mathcal{O})}^2\int_\varrho^t  ds\nonumber \\
			&+8\|\beta\|_{\ell^2}^2\|\nabla\kappa\|_{L^2(\mathcal{O})}^2 \int_\varrho^t  \mathbb{E}\left[\|u(s,\tau,u_{\tau})\|_H^2\right]ds
			+4\|\hat{\gamma}\|_{\ell^2}^2\|\nabla\kappa\|_{L^\infty(\mathcal{O})}^2  \int_\varrho^t  \|u(s,\tau,u_{\tau})\|_H^2ds\nonumber \\
			\leq & 2{\bf \hat{c}}+8|\mathcal{O}|\|\beta\|_{\ell^2}^2\|\nabla\kappa\|_{L^\infty(\mathcal{O})}^2+2\left(\|L\|_{\ell^2}^2 \|\kappa\|_{L^\infty(\mathcal{O})}^2+\frac{2}{\lambda}\|\hat{\gamma}\|_{\ell^2}^2\|\nabla\kappa\|_{L^\infty(\mathcal{O})}^2\right)\int_\varrho^t \|u(s,\tau,u_{\tau})\|_V^2ds\nonumber \\
			&+8|\mathcal{O}|\|\beta\|_{\ell^2}^2\|\nabla\kappa\|_{L^\infty(\mathcal{O})}^2 \int_\varrho^t  \mathbb{E}\left[\|u(s,\tau,u_{\tau})\|_H^2\right]ds.
	\end{align}

	Combining \eqref{NSE-disghjmathfrak{F}} and \eqref{NSE-disghj4.15}, it gets
	\begin{align*}
			&~~\mathscr{G}(t,\tau,u_{\tau})\| u(t,\tau,u_{\tau})\|_V^2
			+2\nu\int_\varrho^t  \mathscr{G}(s,\tau,u_{\tau}) \|Au(s,\tau,u_{\tau})\|_H^2 ds\\
			&=\mathscr{G}(\varrho,\tau,u_{\tau})\| u(\rho,\tau,u_{\tau})\|_V^2-\frac{27\mathfrak{C}^4}{2\nu^3}\int_\varrho^t \mathscr{G}(s,\tau,u_{\tau})\|u(s,\tau,u_{\tau})\|_H^2\| u(s,\tau,u_{\tau})\|_V^{4}ds\\
			&+2\int_\varrho^t \mathscr{G}(s,\tau,u_{\tau})\left(g(s),Au(s,\tau,u_{\tau})\right)ds\\
			&-2\int_\varrho^t  \mathscr{G}(s,\tau,u_{\tau})\langle B(u(s,\tau,u_{\tau}),u(s,\tau,u_{\tau})), Au(s,\tau,u_{\tau})\rangle ds\\
			&+2\int_\varrho^t  \mathscr{G}(s,\tau,u_{\tau})\left(f(\cdot,u(s,\tau,u_{\tau}),\mathscr{L}_{u(s,\tau,u_{\tau})}),Au(s,\tau,u_{\tau})\right)ds\\
			&+2\varepsilon\int_\varrho^t  \mathscr{G}(s,\tau,u_{\tau})\left(Au(s,\tau,u_{\tau}),\sigma(s,u(s,\tau,u_{\tau}),\mathscr{L}_{u(s,\tau,u_{\tau})})\right)dW(s)\\
			&+\varepsilon^2 \int_\varrho^t \mathscr{G}(s,\tau,u_{\tau})\|\nabla \sigma(s,u(s,\tau,u_{\tau}),\mathscr{L}_{u(s,\tau,u_{\tau})})\|_{\mathcal{L}_2(\ell^2,H)}^2ds,
	\end{align*}
	which, together with
	\eqref{NSE-disghj4.16}-\eqref{NSE-disghj4.19}, deduces that 
	for all $\tau\in \mathbb{R}$, $t-\tau> 1$ and $\varrho\in (t-1,t)$,
	\begin{align}\label{NSE-disghj4.20}
			&\,\,\, \mathbb{E}\left[\mathscr{G}(t,\tau,u_{\tau})\| u(t,\tau,u_{\tau})\|_V^2\right]+\nu\int_\varrho^t  \mathbb{E}\left[\mathscr{G}(s,\tau,u_{\tau}) \|Au(s,\tau,u_{\tau})\|_H^2\right] ds\nonumber \\
			&\leq \mathbb{E}\left[\mathscr{G}(\varrho,\tau,u_{\tau})\| u(\rho,\tau,u_{\tau})\|_V^2\right]+\frac{4}{\nu}\int_\varrho^t  \mathbb{E}\left[\mathscr{G}(s,\tau,u_{\tau})\|g(s)\|_{H}^2\right]ds\nonumber \\
			&+\underbrace{2\left(\frac{8}{\nu\lambda}\left\|\phi_1\right\|_{L^\infty(\mathcal{O})}+\|L\|_{\ell^2}^2 \|\kappa\|_{L^\infty(\mathcal{O})}^2+\frac{2}{\lambda}\|\hat{\gamma}\|_{\ell^2}^2\|\nabla\kappa\|_{L^\infty(\mathcal{O})}^2\right)}_{{\bf k}_5}\int_\varrho^t \mathbb{E}\left[\mathscr{G}(s,\tau,u_{\tau})\|u(s,\tau,u_{\tau})\|_V^2\right]ds\nonumber \\
			&+\underbrace{8\left(\frac{1}{\nu}\left\|\psi_1\right\|^2_{L^2(\mathcal{O})}+|\mathcal{O}|\|\beta\|_{\ell^2}^2\|\nabla\kappa\|_{L^\infty(\mathcal{O})}^2 \right)}_{{\bf k}_6}
			\int_\varrho^t 
			\mathbb{E}\left[\mathscr{G}(s,\tau,u_{\tau})\right]\mathbb{E}(\|u(s,\tau,u_{\tau})\|_H^2)ds\nonumber \\
			&+\underbrace{\left(\frac{16}{\nu}|\mathcal{O}|\|\phi_1\|_{L^\infty(\mathcal{O})}^2+2{\bf \hat{c}}+8|\mathcal{O}|\|\beta\|_{\ell^2}^2\|\nabla\kappa\|_{L^\infty(\mathcal{O})}^2\right)}_{{\bf k}_7}\int_\varrho^t  \mathbb{E}\left[\mathscr{G}(s,\tau,u_{\tau})\right]ds.
	\end{align}
	
	We integrate \eqref{NSE-disghj4.20} with respect to $\varrho$ from $t-1$ to $t$, it yields
	\begin{align}\label{NSE-disghj4.21}
		\begin{split}
			&\,\,\, \mathbb{E}\left[\mathscr{G}(t,\tau,u_{\tau})\| u(t,\tau,u_{\tau})\|_V^2\right]\\
			&\leq \left(1+{{\bf k}_5}+\frac{{{\bf k}_6}}{\lambda}\right)\int_{t-1}^{t}\mathbb{E}\left[\| u(s,\tau,u_{\tau})\|_V^2\right]ds+{{\bf k}_7}+\frac{4}{\nu}\|g_0\|^2_{C_b{(\mathbb{R},H)}}.
		\end{split}
	\end{align}
	Therefore, the desired result follows by combining Lemma \ref{NSE-dis-lemma4.2} with inequality \eqref{NSE-disghj4.21}. This completes the proof.
\end{proof}

\section{Existence of uniform measure attractors}\label{ex-pull-NS}

In this section, we focus on studying the existence and uniqueness of uniform measure attractors of problem \eqref{NSE-dis1.-01} in $(\mathcal {P}_4 (X), d_{\mathcal{P}(X)})$. To this end, we first define a process on $(\mathcal {P}_4 (X), d_{\mathcal{P}(X)})$. Specifically, for given $t\geq \tau \in\mathbb{R}$, we define $P_*^{(g,h)}:\mathcal{P}_4(H)\rightarrow \mathcal{P}_4(H)$ by
\begin{align}\label{NSE-disghj5.1}
	P_*^{(g,h)}(t,\tau)\mu=\mathscr{L}_{u^{(g,h)}(t,\tau,u_\tau)}, \text{~~for every~} \mu\in \mathcal{P}_4(H),
\end{align}
where $u^{(g,h)}(t,\tau, u_\tau)$ is the solution of problem \eqref{NSE-dis1.-01}. For a given \(\mu \in P_4(H)\), one may use Skorokhod's theorem (see \cite{Schmalfuss-1991}) to find another probability space \((\widetilde{\Omega},\widetilde{\mathscr{F}},\widetilde{\mathbb{P}})\) and \(\widetilde{u}_\tau \in L^4(\widetilde{\Omega}, \widetilde{\mathscr{F}}_\tau, H)\) such that \(\mathcal{L}_{\widetilde{u}_\tau} = \mu\). Without loss of generality, we work on the original probability space and denote the random initial values by \(u_\tau \in L^4(\Omega, \mathscr{F}_\tau, H)\) with \(\mathcal{L}_{u_\tau} = \mu\).

In addition, for every $t\in \mathbb{R}^+$ and $\tau \in\mathbb{R}$, we define $U^{(g,h)}(t,\tau):\mathcal{P}_4(H)\rightarrow \mathcal{P}_4(H)$ as follows:
\begin{align}\label{NSE-disghj5.2}
	U^{(g,h)}(t+\tau,\tau) \mu=P_*^{(g,h)}(t+\tau,\tau)\mu,\quad \forall \mu\in \mathcal{P}_4(H).
\end{align}
From the uniqueness of the solutions of problem \eqref{NSE-dis1.-01}, we know that for all $\tau \in \mathbb{R}$, $\tau\leq s \leq t$ and $u_\tau \in L^4(\Omega, \mathscr{F}_\tau; H)$,
$$
u^{(g,h)}(t, \tau, u_\tau)
= u^{(g,h)}(t, s, u^{(g,h)}(s, \tau, u_\tau) ),
$$
it follows that for all $t\geq s\geq \tau \in\mathbb{R}$,
$$
U^{(g,h)} (t, \tau)\mu =U^{(g,h)} (t, s)\circ U^{(g,h)}(s, \tau) \mu, \quad \forall \mu\in \mathcal{P}_4(H).
$$
and for all $\tau \in \mathbb{R}$,
$$
U^{(g,h)}(\tau, \tau) = I_{\mathcal{P}_4(H)}.
$$
Moreover, the similar argument as that of \cite[Lemma 4.1]{li2021b-JDE} we have the following translation
identity for the operator family $\{U^{(g,h)}(t,\tau)\}_{(g,h)\in \Sigma}$ and the translation group $\{T(s)\}_{s\in \mathbb{R}}$:
\begin{align*}
	U^{(g,h)} (t+s, \tau+s)=U^{T(s)(g,h)} (t, \tau),\quad \forall t\geq\tau, \tau\in\mathbb{R}.
\end{align*}

In what follows, we prove the weak continuity of
$U^{(g,h)} (t, \tau)$ over bounded subsets of $\mathcal{P}_4(H)\times\Sigma$, which shall be used to establish the joint continuity of the family of processes $\{U^{(g,h)}(t,\tau)\}_{(g,h)\in \Sigma}$.
\begin{lemma}\label{NSE-disghjsss5.1}
	Suppose that $(H_1)-(H_3)$ and \eqref{NSE-dis-def4.1} and \eqref{NSE-dis-def4.1*} hold.  Let $u_\tau, u_{\tau}^n  \in
	L^4(\Omega, {\mathscr{F}}_\tau,H)$ such that 
	\begin{align*}
	\mathbb{E}\left[\|u_\tau\|_H^4 \right]	\leq R \text{~~and~~} 	\mathbb{E}\left[\|u_{\tau}^n\|_H^4 \right)\leq R
	\end{align*}
	for  some $R>0$. If $\mathscr{L}_{u_{\tau}^n}
	\to \mathscr{L}_{u_\tau}$  weakly and $(g_n,h_n)\rightarrow (g,h)$ in $\Sigma$, then for every $\varepsilon \in (0,1]$, $ \tau \in \mathbb{R} $
	and $t\geq \tau$, it holds that $\mathscr{L}_{u^{(g_n,h_n)}(t,\tau, u_{\tau}^n)} \to \mathscr{L}_{u^{(g,h)}(t,\tau, u_\tau) }$ weakly.
\end{lemma}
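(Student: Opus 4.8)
The plan is to transfer the desired weak convergence of the time-$t$ laws into an in-probability convergence of the solution processes, obtained from a weighted energy estimate for the difference of the two solutions. First, the well-posedness result (Theorem \ref{NSE-dis-well2.2}) guarantees, in particular, that $\mathscr{L}_{u^{(g,h)}(t,\tau,u_\tau)}$ depends on $u_\tau$ only through $\mathscr{L}_{u_\tau}$ — which is precisely what makes $P_*^{(g,h)}(t,\tau)$ well defined in \eqref{NSE-disghj5.1} — so the realizations of the initial data may be chosen at will. Since $\mathscr{L}_{u_{\tau}^{n}}\to\mathscr{L}_{u_\tau}$ weakly and $\sup_{n}\mathbb{E}[\|u_{\tau}^{n}\|_{H}^{4}]\le R$, the family $\{\|u_{\tau}^{n}\|_{H}^{2}\}_{n}$ is uniformly integrable, whence $\mathbb{W}_{2}(\mathscr{L}_{u_{\tau}^{n}},\mathscr{L}_{u_\tau})\to0$; realizing an optimal $\mathbb{W}_{2}$-coupling (on a possibly enlarged probability space) by $\mathscr{F}_{\tau}$-measurable variables independent of $\{W_k\}_{k\in\mathbb{N}}$, we may and do assume $\mathbb{E}[\|u_{\tau}^{n}-u_\tau\|_{H}^{2}]\to0$. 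Writing $u^{n}=u^{(g_n,h_n)}(\cdot,\tau,u_{\tau}^{n})$ and $u=u^{(g,h)}(\cdot,\tau,u_\tau)$, their time-$t$ laws are exactly the measures in the statement, so it suffices to show $u^{n}(t)\to u(t)$ in probability in $H$.

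Set $w^{n}=u^{n}-u$ and apply It\^o's formula to $\|w^{n}(s)\|_{H}^{2}$ using \eqref{NSE-dis1.-01}. Apart from the dissipation term $-2\nu\int_{\tau}^{s}\|w^{n}\|_{V}^{2}\,dr$, the contribution requiring a trick is the convection difference, which — after the cancellations \eqref{NSE-dis3.--002}, the trilinear bound \eqref{NSE-dis3.2} and Young's inequality — is controlled by $\nu\int_{\tau}^{s}\|w^{n}\|_{V}^{2}\,dr+c_{\nu}\int_{\tau}^{s}\|u^{n}\|_{H}^{2}\|u^{n}\|_{V}^{2}\|w^{n}\|_{H}^{2}\,dr$; I neutralize the last term by multiplying through by the random weight $\mathscr{G}^{n}(s)=\exp\!\big(-c_{\nu}\int_{\tau}^{s}\|u^{n}(r)\|_{H}^{2}\|u^{n}(r)\|_{V}^{2}\,dr\big)$, namely the weight of Lemma \ref{NSE-dis-lemma4.5} formed from $u^{n}$. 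The drift and noise differences are estimated with \eqref{NSE-dis-f_3} and \eqref{NSE-dis-G5}; since $(u^{n}(r),u(r))$ is a coupling of $(\mathscr{L}_{u^{n}(r)},\mathscr{L}_{u(r)})$ they produce terms of the types $C\int_{\tau}^{s}\|w^{n}\|_{H}^{2}\,dr$ and $C\int_{\tau}^{s}\mathbb{W}_{2}^{2}(\mathscr{L}_{u^{n}(r)},\mathscr{L}_{u(r)})\,dr\le C\int_{\tau}^{s}\mathbb{E}[\|w^{n}(r)\|_{H}^{2}]\,dr$, whereas the differences $g_{n}-g$ and $h_{n}-h$ yield only $\varrho_{n}:=C(t-\tau)\big(\|g_{n}-g\|_{C_b(\mathbb{R},H)}^{2}+\|h_{n}-h\|_{C_b(\mathbb{R},H)}^{2}\big)\to0$, using $(g_{n},h_{n})\to(g,h)$ in $\Sigma$. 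The stochastic integral is a true martingale because the uniform $L^{4}$-bounds of Lemma \ref{NSE-dis-lemma4.3} (and Theorem \ref{NSE-dis-well2.2}) make its bracket integrable; those bounds also give $\sup_{n}\mathbb{E}\int_{\tau}^{t}\|u^{n}\|_{H}^{2}\|u^{n}\|_{V}^{2}\,dr<\infty$. Since $0<\mathscr{G}^{n}\le1$, taking expectations leads, for every $s\in[\tau,t]$, to
\begin{align*}
	\mathbb{E}\!\big[\mathscr{G}^{n}(s)\|w^{n}(s)\|_{H}^{2}\big]\ \le\ \mathbb{E}[\|u_{\tau}^{n}-u_\tau\|_{H}^{2}]+\varrho_{n}+C\int_{\tau}^{s}\mathbb{E}[\|w^{n}(r)\|_{H}^{2}]\,dr.
\end{align*}

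The main obstacle is now visible, and it is intrinsic to the distribution-dependent setting: the left-hand side controls only the \emph{weighted} mean-square error, while the mean-field coupling reinjects the \emph{unweighted} one $\mathbb{E}[\|w^{n}\|_{H}^{2}]$, and — since no Feller property or duality is available here (cf.\ the Introduction) — this mismatch cannot be resolved for free. I would close the loop using the uniform estimates of Section \ref{NS-Sec4}: the enstrophy bound of Lemma \ref{NSE-dis-lemma4.3}, together with the exponential integrability of the dissipation that the largeness condition on $\nu$ provides, allows one (via Cauchy--Schwarz combined with a stopping-time localization) to dominate $\mathbb{E}[\|w^{n}(s)\|_{H}^{2}]$ by $\mathbb{E}[\mathscr{G}^{n}(s)\|w^{n}(s)\|_{H}^{2}]$ up to an error that is uniformly small; inserting this back, applying Gronwall's inequality and letting $n\to\infty$ yields $\mathbb{E}[\mathscr{G}^{n}(t)\|w^{n}(t)\|_{H}^{2}]\to0$, and because $\mathscr{G}^{n}(t)>0$ almost surely with $\sup_{n}\mathbb{P}(\mathscr{G}^{n}(t)<\eta)\to0$ as $\eta\downarrow0$ (again Lemma \ref{NSE-dis-lemma4.3}), this forces $w^{n}(t)\to0$ in probability in $H$, hence $\mathscr{L}_{u^{n}(t)}\to\mathscr{L}_{u(t)}$ weakly. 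A more robust alternative completion avoids the weight altogether: the uniform bounds of Section \ref{NS-Sec4} make $\{\mathscr{L}_{u^{n}}\}$ tight on $L^{2}(\tau,t;H)\cap C([\tau,t];V^{*})$, a Skorokhod representation furnishes almost surely convergent copies, and one passes to the limit in \eqref{NSE-dis1.-01} — here the uniform fourth moments are exactly what delivers the convergence $\mathscr{L}_{u^{n}(r)}\to\mathscr{L}_{\widetilde u(r)}$ in $\mathcal{P}_{2}(H)$ needed to pass to the limit in $f$ and $\sigma$, while $(g_{n},h_{n})\to(g,h)$ handles the forcing — after which Theorem \ref{NSE-dis-well2.2} identifies the limit as $u^{(g,h)}(\cdot,\tau,u_\tau)$, so tightness plus uniqueness of the limit point gives the weak convergence of the entire sequence. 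In both routes the driving hypothesis is the uniform fourth-moment control, which is why the joint continuity is obtained on balls $\mathbb{B}_{\mathcal{P}_{4}(H)}(r)$ rather than on all of $\mathcal{P}_{4}(H)$.
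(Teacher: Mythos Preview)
Your primary route has a genuine gap at the place you yourself flag: you arrive at
\[
\mathbb{E}\!\big[\mathscr{G}^{n}(s)\|w^{n}(s)\|_{H}^{2}\big]\ \le\ \mathbb{E}[\|u_{\tau}^{n}-u_\tau\|_{H}^{2}]+\varrho_{n}+C\int_{\tau}^{s}\mathbb{E}[\|w^{n}(r)\|_{H}^{2}]\,dr,
\]
and then propose to recover $\mathbb{E}[\|w^{n}\|_{H}^{2}]$ from $\mathbb{E}[\mathscr{G}^{n}\|w^{n}\|_{H}^{2}]$ by Cauchy--Schwarz together with ``the exponential integrability of the dissipation that the largeness condition on $\nu$ provides''. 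But nothing in Section~\ref{NS-Sec4} gives exponential moments of $\int_\tau^t\|u^n\|_H^2\|u^n\|_V^2\,dr$; Lemma~\ref{NSE-dis-lemma4.3} only yields a uniform $L^1$ bound on this integral, and conditions \eqref{NSE-dis-def4.1}--\eqref{NSE-dis-def4.1*} are calibrated for linear absorption, not for $\mathbb{E}[\exp(\delta\!\int\|u\|_H^2\|u\|_V^2)]<\infty$. Without that, the loop does not close and Gronwall cannot be run. Your fallback tightness/Skorokhod sketch is plausible in spirit but is not a proof here: identifying the limit in a McKean--Vlasov equation requires knowing that the \emph{laws} $\mathscr{L}_{u^n(r)}$ converge, which is precisely the assertion, so one must argue carefully along subsequences and then invoke uniqueness---none of which you carry out.

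The paper avoids the weighted/unweighted mismatch entirely by two simple choices. First, in the trilinear step it decomposes so that the surviving term is $\langle B(\varpi^n,u),\varpi^n\rangle$, producing the factor $\|u\|_V^2$ of the \emph{limit} $u$ rather than your $\|u^n\|_H^2\|u^n\|_V^2$. Second, instead of an exponential weight it localizes with the stopping time
\[
\tau_k=\inf\Big\{s\ge\tau:\ \int_\tau^s\|u(r)\|_V^2\,dr>k\Big\},
\]
which depends only on $u$, not on $n$. On $[\tau,\tau_k]$ the Gronwall factor is the deterministic constant $e^{Ck}$, so after a pathwise Gronwall, BDG and a second (in-expectation) Gronwall one controls the \emph{unweighted} quantity $\mathbb{E}\big[\sup_{[\tau,s\wedge\tau_k]}\|\varpi^n\|_H^2\big]$ by $\mathbb{E}[\|u_\tau^n-u_\tau\|_H^2]+\|g_n-g\|_{C_b}^2+\|h_n-h\|_{C_b}^2$. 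Chebyshev then gives $\mathbb{P}(\|\varpi^n(s_1)\|_H>\delta)\le o_n(1)+\mathbb{P}(\tau_k<s_1)$, and by Lemma~\ref{NSE-dis-lemma004.2} the last probability is $\le \mathscr{M}/k$, uniformly in $n$. Sending first $n\to\infty$ and then $k\to\infty$ yields convergence in probability, hence in law. In short: use $u$ (not $u^n$) in the trilinear bound and replace your random weight by a stopping time; this is exactly what makes the mean-field term harmless without any exponential-moment assumption.
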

\begin{proof}
	Since $\mathscr{L}_{u_{\tau}^n}\rightarrow \mathscr{L}_{u_\tau}$ weakly, it follows from Skorokhod's theorem that there exist a probability space $(\widetilde{\Omega}, \widetilde{\mathscr{F}}, \widetilde{\mathbb{P}} )$ and the random variables
	$ \widetilde{u}_\tau $ and $ \widetilde{u}_{\tau}^n $ defined on $(\widetilde{\Omega}, \widetilde{\mathscr{F}}, \widetilde{\mathbb{P}} )$ such that the distributions of $ \widetilde{u}_\tau $ and $ \widetilde{u}_{\tau}^n $ coincide with those of $ {u}_\tau $ and $ {u}_{\tau}^n $, respectively. Moreover,  $ \widetilde{u}_{\tau}^n \to  \widetilde{u}_\tau $ holds, $\widetilde{\mathbb{P}}$-almost surely. In particular,  we note that $ \widetilde{u}_\tau $, $ \widetilde{u}_{\tau}^n $ and $W$ 
	can be viewed as random variables defined on the product space $(\Omega \times \widetilde{\Omega}, \mathscr{F} \times \widetilde{\mathscr{F}},
	{\mathbb{P}} \times \widetilde{\mathbb{P}})$. Hence, we may consider the solutions of the corresponding stochastic equation on $(\Omega \times \widetilde{\Omega}, \mathscr{F} \times \widetilde{\mathscr{F}},
	{\mathbb{P}} \times \widetilde{\mathbb{P}})$ with initial data $ \widetilde{u}_\tau $ and $ \widetilde{u}_{\tau}^n $, rather than the solutions on the original space $(\Omega  , \mathscr{F}  ,  {\mathbb{P}}  )$  with initial data
	$   {u}_\tau$ and $ {u}_\tau^n $. 
	However, for the sake of simplicity, we shall identify the new random variables with the original ones and work exclusively with the solutions of the equation in the original space. Due to $ \widetilde{u}_{\tau}^n \to  \widetilde{u}_\tau $, ${\mathbb{P}}$-almost surely, then, without loss of generality, we may assume that the original sequence satisfies $ {u}_\tau^n \to {u}_\tau$,  $\mathbb{P}$-almost surely.
	
	Assume that
	$ u^n (t) = u^{(g_n,h_n)}( t,\tau, u_{\tau}^n ) $, 
	$ u(t) = u^{(g,h)}( t,\tau, u_{\tau} )$. Let $ \varpi^n (t) = u^{(g_n,h_n)}  (t,\tau,  u_{\tau}^n )
	- u^{(g,h)}  ( t,\tau, u_{\tau} ) $, then for all $t> \tau$, $\varpi^n (t)$ satisfies 
	\begin{align*}
		&d\varpi^n (t)+\nu A\varpi^n (t)dt + \left(B(u^n (t),u^n (t))-B(u(t),u(t))\right)dt=(g_n(t)-g(t))dt\\
		+&\left(f(x,u^n (t),\mathscr{L}_{u^n(t)})-f(x,u (t),\mathscr{L}_{u(t)})\right)dt
		+\varepsilon \left(\sigma(t,u^n (t),\mathscr{L}_{u^n(t)})-\sigma(t,u (t),\mathscr{L}_{u(t)})\right)dW(t)
	\end{align*} 
	with initial data $\varpi^n_\tau:=\varpi^n(\tau)={u}_\tau^n-{u}_\tau$.

	Applying It\^{o}'s formula for the process $\|\varpi^n (t)\|_H^2$, we can obtain that for any  $t> \tau$, $s\in (\tau, t)$ and almost all $\omega$,
	\begin{align}\label{NSE-disghjmea5.3*}
		\begin{split}
			& \|\varpi^n (s)\|_H^2
			+2\nu\int_\tau^s \|\varpi^n (r)\|_V^2dr 
			=\|\varpi^n_\tau\|_H^2\\
			&+2\int_{\tau}^s\left(g_n(r)-g(r),\varpi^n(r)\right)dr-2\int_{\tau}^s \langle B(u^n (r),u^n (r))-B(u(r),u(r)), \varpi^n(r)\rangle dr \\
			& +2\int_{\tau}^s \left(f(\cdot,u^n (r),\mathscr{L}_{u^n(r)})-f(\cdot,u (r),\mathscr{L}_{u(r)}),\varpi^n(r)\right)dr\\
			& +2\varepsilon \int_{\tau}^s \left(\varpi^n(r),(\sigma(r,u^n (r),\mathscr{L}_{u^n(r)})-\sigma(r,u (r),\mathscr{L}_{u(r)}))dW(r)\right)\\ 
			& +\varepsilon^2 \int_{\tau}^s\|\sigma(r,u^n (r),\mathscr{L}_{u^n(r)})-\sigma(r,u (r),\mathscr{L}_{u(r)})\|_{\mathcal{L}_2(\ell^2,H)}^2dr.
		\end{split}
	\end{align}
	
   We deal with the right-hand side terms of \eqref{NSE-disghjmea5.3*}. For the second term on the right-hand side of \eqref{NSE-disghjmea5.3*}, by H\"{o}lder's inequality, Young's inequality  and Poincar\'{e}'s inequality we get
   \begin{align}
   	2\int_{\tau}^s\left(g_n(r)-g(r),\varpi^n(r)\right)dr
   	\leq \frac{\nu }{2}\int_{\tau}^s \|\varpi^n(r)\|_V^2dr+\frac{2}{\nu \lambda}\int_{\tau}^s\|g_n(r)-g(r)\|_H^2dr
   \end{align}
   For the third term on the right-hand side of \eqref{NSE-disghjmea5.3*}, by \eqref{NSE-dis3.--002} and  \eqref{NSE-dis3.2} we have
	\begin{align}\label{NSE-disghjmea5.4}
		\begin{split}
			&-2\int_{\tau}^s \langle B(u^n (r),u^n (r))-B(u(r),u(r)), \varpi^n(r)\rangle dr\\
			&\leq  \frac{\nu}{2}\int_{\tau}^s \|\varpi^n(r)\|_V^2dr
			+\frac{2\mathfrak{C}^2}{\nu}\int_{\tau}^s\|u(r)\|_V^2 \|\varpi^n(r)\|_H^2dr.
		\end{split}
	\end{align}
	For the fourth term on the right-hand side of \eqref{NSE-disghjmea5.3*}, it follows from \eqref{NSE-dis-f_3} that
	\begin{align}\label{NSE-disghjmea5.5}
		\begin{split}
			&2\int_{\tau}^s {}\left(f(\cdot,u^n (r),\mathscr{L}_{u^n(r)})-f(\cdot,u (r),\mathscr{L}_{u(r)}),\varpi^n(r)\right)dr\\
			\leq&  2\int_{\tau}^s {}\int_{\mathcal{O}} \left(\phi_2(x)|\varpi^n(r)|+\psi_2(x)\sqrt{\mathbb{E}\left[\|\varpi^n(r)\|_H^2\right]}|\varpi^n(r)|\right)dx dr\\
			\leq& \left(2\|\phi_2\|_{L^\infty(\mathcal{O})}+\|\psi_2\|_{L^2(\mathcal{O})}\right)\int_{\tau}^s {} \|\varpi^n(r)\|_H^2dr+ \|\psi_2\|_{L^2(\mathcal{O})}\int_{\tau}^s \mathbb{E}\left[\|\varpi^n(r)\|_H^2\right]dr.
		\end{split}
	\end{align}
	For the fifth term on the right-hand side of \eqref{NSE-disghjmea5.3*}, by \eqref{NSE-dis-G5} we get
	\begin{align}\label{NSE-disghjmea5.6}
		\begin{split}
			& \varepsilon^2 \int_{\tau}^s\|\sigma(r,u^n (r),\mathscr{L}_{u^n(r)})-\sigma(r,u (r),\mathscr{L}_{u(r)})\|_{\mathcal{L}_2(\ell^2,H)}^2dr
			\leq  2\int_{\tau}^s\|h_n(r)-h(r)\|_H^2dr\\
			&\qquad+4\|\kappa\|^2_{L^\infty(\mathcal{O})}\|L\|_{\ell^2}^2(1+|\mathcal{O}|)\int_{\tau}^s \left(\|\varpi^n(r)\|_H^2+\mathbb{E}\left[\|\varpi^n(r)\|_H^2\right]\right)dr.
		\end{split}
	\end{align}
	
	Given $k>0$, define the sequence of stopping times by
	$$
	\tau_k=\inf\left\{s\geq \tau:\int_\tau^s \|u(r)\|_V^2dr>k \right\}
	$$
	with the convention that $\tau_k=+\infty$ if the set is empty.
	By virtue of  \eqref{NSE-disghjmea5.3*}-\eqref{NSE-disghjmea5.6}, we can derive that for any $\tau \in \mathbb{R}$, $t> \tau$, $\tau<s<s_1<t$ and almost all $\omega$,
	\begin{align*}
		&\sup_{\varkappa \in [\tau,s\wedge \tau_k]} \|\varpi^n (\varkappa)\|_H^2+{\nu}\int_\tau^{s \wedge \tau_k} 
		\|\varpi^n (r)\|_V^2dr\\
		\leq& \|\varpi^n_\tau\|_H^2+\underbrace{\left(
			2\|\phi_2\|_{L^\infty(\mathcal{O})}+\|\psi_2\|_{L^2(\mathcal{O})}+4\|\kappa\|^2_{L^\infty(\mathcal{O})}\|L\|_{\ell^2}^2(1+|\mathcal{O}|)\right)}_{{\bf k}_8}\int_{\tau}^{s \wedge \tau_k} \|\varpi^n(r)\|_H^2dr\\
		&+\frac{2\mathfrak{C}^2}{\nu}\int_{\tau}^{s \wedge \tau_k} \|u(r)\|_V^2 \|\varpi^n(r)\|_H^2dr+\frac{2}{\nu \lambda}\int_{\tau}^{s \wedge \tau_k}\|g_n(r)-g(r)\|_H^2dr+2\int_{\tau}^{s \wedge \tau_k}\|h_n(r)-h(r)\|_H^2dr
		\\
		&+ \underbrace{\left(\|\psi_2\|_{L^2(\mathcal{O})}+ 4\|\kappa\|^2_{L^\infty(\mathcal{O})}\|L\|_{\ell^2}^2(1+|\mathcal{O}|)\right)}_{{\bf k}_9}\int_{\tau}^{s \wedge \tau_k} \mathbb{E}\left[\|\varpi^n(r)\|_H^2\right]dr \\
		&+2 \varepsilon\sup_{\varkappa\in [\tau,s\wedge \tau_k]}\left|\int_{\tau}^\varkappa \left(\varpi^n(r), (\sigma(r,u^n (r),\mathscr{L}_{u^n(r)})-\sigma(r,u (r),\mathscr{L}_{u(r)}))dW(r)\right)\right|\\
		\leq& \|\varpi^n_\tau\|_H^2+{\bf k}_8\int_{\tau}^{s } \sup_{\varkappa\in [\tau,r\wedge \tau_k]}\|\varpi^n(\varkappa)\|_H^2dr  + {\bf k}_9\int_{\tau}^{s_1\wedge \tau_k} \mathbb{E}\left[\|\varpi^n(r)\|_H^2\right]dr\\
		&+\frac{2\mathfrak{C}^2}{\nu}\int_{\tau}^{s} \|u(r\wedge \tau_k)\|_V^2 \left(\sup_{\varkappa\in [\tau,r\wedge \tau_k]} \|\varpi^n(\varkappa)\|_H^2\right)dr+{2}\|g_n-g\|_{C_b(\mathbb{R},H)}^2+2\|h_n-h\|_{C_b(\mathbb{R},H)}^2\\
		& +2 \sup_{\varkappa\in [\tau,s_1\wedge \tau_k]}\left|\int_{\tau}^\varkappa \left(\varpi^n(r), (\sigma(r,u^n (r),\mathscr{L}_{u^n(r)})-\sigma(r,u (r),\mathscr{L}_{u(r)}))dW(r)\right)\right|.
	\end{align*}
Applying Gronwall’s lemma to the inequality above yields, for any $\tau \in \mathbb{R}$, $t> \tau$, $\tau<s<s_1<t$ and almost all $\omega$,
	\begin{align}\label{NSE-disghjmea5.07}
		\begin{split}
			&\,\,\sup_{\varkappa\in [\tau,s\wedge \tau_k]} \|\varpi^n (\varkappa)\|_H^2\\
			&\leq \bigg(\|\varpi^n_\tau\|_H^2+\mathbf{k}_{9}\int_{\tau}^{{s_1} \wedge \tau_k} \mathbb{E}\left[\|\varpi^n(r)\|_H^2\right]dr+{2}\|g_n-g\|_{C_b(\mathbb{R},H)}^2+2\|h_n-h\|_{C_b(\mathbb{R},H)}^2\\
			&\quad +2 \sup_{\varkappa\in [\tau,{s_1}\wedge \tau_k]}\left|\int_{\tau}^{\varkappa} \left(\varpi^n(r), (\sigma(r,u^n (r),\mathscr{L}_{u^n(r)})-\sigma(r,u (r),\mathscr{L}_{u(r)}))dW(r)\right)\right|\bigg) e^{\mathbf{k}_{10}(t-\tau)},
		\end{split}
	\end{align}
	where $\mathbf{k}_{10}=\mathbf{k}_{8}+\frac{2\mathfrak{C}^2}{\nu}k>0$.
	By taking the supremum of \eqref{NSE-disghjmea5.07} over $s\in [\tau,s_1]$ and then taking the expectation on both sides of the resulting expression, we obtain
	\begin{align}\label{NSE-disghjmea5.007}
		\begin{split}
			&\quad \mathbb{E}\left[\sup_{\varkappa\in [\tau,{s_1}\wedge \tau_k]} \|\varpi^n (\varkappa)\|_H^2\right]\\
			&\leq \Bigg(\mathbb{E}\left[\|\varpi^n_\tau\|_H^2\right]+\mathbf{k}_{9}\int_{\tau}^{{s_1} \wedge \tau_k} \mathbb{E}\left[\|\varpi^n(r)\|_H^2\right]dr+{2}\|g_n-g\|_{C_b(\mathbb{R},H)}^2+2\|h_n-h\|_{C_b(\mathbb{R},H)}^2\\
			&+2 \mathbb{E}\left[\sup_{\varkappa\in [\tau,{s_1}\wedge \tau_k]}\left|\int_{\tau}^\varkappa \left(\varpi^n(r),\sigma(r,u^n (r),\mathscr{L}_{u^n(r)})-\sigma(r,u (r),\mathscr{L}_{u(r)})\right)dW(r)\right|\right]\Bigg) e^{\mathbf{k}_{10}(t-\tau)}.
		\end{split}
	\end{align}
	For the stochastic integral term on the right-hand side of \eqref{NSE-disghjmea5.007}, by BDG's inequality and \eqref{NSE-dis-G5} we get
	{\small
	\begin{align*}
		&2\mathbb{E}\left[\sup_{\varkappa\in [\tau,{s_1}\wedge \tau_k]}\left|\int_{\tau}^\varkappa \left(\varpi^n(r), (\sigma(r,u^n (r),\mathscr{L}_{u^n(r)})-\sigma(r,u (r),\mathscr{L}_{u(r)}))dW(r)\right)\right|\right]\\
	\leq& 2\mathbb{E}\left[\left(\int_{\tau}^{s_1\wedge \tau_k} \|\sigma(r,u^n (r),\mathscr{L}_{u^n(r)})-\sigma(r,u (r),\mathscr{L}_{u(r)})\|^2_{\mathcal{L}_2(\ell^2,H)}\|\varpi^n(r)\|_H^2dr\right)^{1/2}\right]\\
		\leq& 2\sqrt{2}\|\kappa\|_{L^\infty(\mathcal{O})}\|L\|_{\ell^2}\sqrt{1+|\mathcal{O}|}\mathbb{E}\left[\sup_{\varkappa\in [\tau,s_1\wedge \tau_k]} \|\varpi^n (\varkappa)\|_H \left(\int_{\tau}^{s_1\wedge \tau_k} \left(\|\varpi^n(r)\|_H^2+\mathbb{E}\left[\|\varpi^n(r)\|_H^2\right]\right)dr\right)^{1/2}\right]\\
		\leq& \frac{1}{2}\mathbb{E}\left[\sup_{\varkappa\in [\tau,s_1\wedge \tau_k]} \|\varpi^n (\varkappa)\|_H^2\right]e^{-\mathbf{k}_{10}(t-\tau)}\\
		&+4\|\kappa\|^2_{L^\infty(\mathcal{O})}\|L\|_{\ell^2}^2(1+|\mathcal{O}|)e^{\mathbf{k}_{10}(t-\tau)}\int_\tau^{s_1} \mathbb{E}\left[\sup_{\varkappa\in [\tau,r\wedge \tau_k]}\|\varpi^n(\varkappa)\|_H^2\right]dr,
	\end{align*}
}
	which along with \eqref{NSE-disghjmea5.007} yields
	\begin{align}\label{NSE-disghjmea5.0007}
		\begin{split}
			&\mathbb{E}\left[\sup_{\varkappa\in [\tau,s_1\wedge \tau_k]} \|\varpi^n (\varkappa)\|_H^2\right]
			\leq \Bigg(2\mathbb{E}\left[\|\varpi^n_\tau\|_H^2\right]+4\|g_n-g\|_{C_b(\mathbb{R},H)}^2\\
			&\qquad \qquad +4\|h_n-h\|_{C_b(\mathbb{R},H)}^2+\mathbf{k}_{11}\int_{\tau}^{s_1} \mathbb{E}\left[\sup_{\varkappa\in [\tau,r\wedge \tau_k]}\|\varpi^n(\varkappa)\|_H^2\right]dr\Bigg) e^{2\mathbf{k}_{10}(t-\tau)},
		\end{split}
	\end{align}
	where $\mathbf{k}_{11}=2\mathbf{k}_{9}+8\|\kappa\|^2_{L^\infty(\mathcal{O})}\|L\|_{\ell^2}^2(1+|\mathcal{O}|)$.

	Applying Gronwall's lemma again to \eqref{NSE-disghjmea5.0007}, for any $\tau \in \mathbb{R}$, $t> \tau$ and $\tau<s<s_1<t$, $k>0$, we have 
	\begin{align*}
		&\mathbb{E}\left[\sup_{r\in [\tau,s_1\wedge \tau_k]} \|\varpi^n (r)\|_H^2\right]\\
		\leq&
		4e^{2\mathbf{k}_{10}(t-\tau)}\left(\mathbb{E}\left[\|\varpi^n_\tau\|_H^2\right]+\|g_n-g\|_{C_b(\mathbb{R},H)}^2+\|h_n-h\|_{C_b(\mathbb{R},H)}^2\right)e^{\mathbf{k}_{11}e^{2\mathbf{k}_{10}(t-\tau)}(t-\tau)}.
	\end{align*}
	Thus, we can obtain that for any $s_1\in [\tau, t]$,
\begin{align}\label{e5}
	\begin{split}
		&\mathbb{E}\left[\|u^{(g_n,h_n)}( s_1\wedge \tau_k,\tau, u_\tau^n)-u^{(g,h)}(s_1\wedge \tau_k,\tau, u_\tau)\|_H^2\right]\\
		\leq&
		\left(\mathbb{E}\left[\|\varpi^n_\tau\|_H^2\right]+\|g_n-g\|_{C_b(\mathbb{R},H)}^2+\|h_n-h\|_{C_b(\mathbb{R},H)}^2\right) 4e^{\left(2\mathbf{k}_{10}+\mathbf{k}_{11}e^{2\mathbf{k}_{10}(t-\tau)}\right)(t-\tau)}.
	\end{split}
\end{align}

Next, we prove that $u^{(g_n,h_n)}(s_1,\tau, u_{\tau}^n)$ converges in probability to $u^{(g,h)}(s_1,\tau, u_\tau)$ for any $\tau \in \mathbb{R}$, $s_1\in (\tau, t)$ with $t>\tau$. Note that for any $\delta>0$,
\begin{align}\label{dsfsdvgdbfb5.11}
	\begin{split}
		\mathbb{P}\left(\|\varpi^n (s_1)\|_H^2>\delta\right)
		\leq &\, \mathbb{P}\left(\sup_{r\in [\tau,s_1]}\|\varpi^n (r)\|_H^2>\delta\right)\\
		\leq &\, \mathbb{P}\left(\left\{\sup_{r\in [\tau,s_1]}\|\varpi^n (r)\|_H^2>\delta, \tau_k\geq s_1\right\}\right)\\
		&+\mathbb{P}\left(\left\{\sup_{r\in [\tau,s_1]}\|\varpi^n (r)\|_H^2>\delta, \tau_k<s_1\right\}\right)\\
		\leq &\, \mathbb{P}\left(\left\{\sup_{r\in [\tau,s_1]}\|\varpi^n (r\wedge \tau_k)\|_H^2>\delta\right\}\right)+\mathbb{P}\left(\tau_k<s_1\right).
	\end{split}
\end{align}
By Chebyshev's inequality, \eqref{e5} and Lemma \ref{NSE-dis-lemma004.2} we obtain that for any $\tau \in \mathbb{R}$, $s_1\in (\tau, t)$ with $t>\tau$,
\begin{align}\label{dsfsdvgdbfb5.12}
	\begin{split}
	&\mathbb{P}\left(\left\{\sup_{r\in [\tau,s_1]}\|\varpi^n (r\wedge \tau_k)\|_H^2>\delta\right\}\right)\leq \frac{1}{\delta}\mathbb{E}\left[\sup_{r\in [\tau,s_1]}\|\varpi^n (r\wedge \tau_k)\|^2_{H}\right]\\
	\leq& \frac{1}{\delta}\left(\mathbb{E}\left[\|\varpi^n_\tau\|_H^2\right]+\|g_n-g\|_{C_b(\mathbb{R},H)}^2+\|h_n-h\|_{C_b(\mathbb{R},H)}^2\right) 4e^{\left(2\mathbf{k}_{10}+\mathbf{k}_{11}e^{2\mathbf{k}_{10}(t-\tau)}\right)(t-\tau)}	
	\end{split}
\end{align}
and
\begin{align}\label{5.13}
\mathbb{P}\left(\tau_k<s_1\right)\leq \mathbb{P}\left(\int_\tau^{s_1} \|u(r)\|_V^2dr>k\right)\leq \frac{1}{k}\mathbb{E}\left[\int_\tau^{s_1} \|u(r)\|_V^2dr\right]\leq \frac{\mathscr{M}}{k},
\end{align}
Combining with \eqref{dsfsdvgdbfb5.11}-\eqref{5.13}, we obtain
\begin{align}\label{5.14}
		&\mathbb{P}\left(\|u^{(g_n,h_n)}  (s_1,\tau,  u_{\tau}^n )
		- u^{(g,h)}  ( s_1,\tau, u_{\tau} )\|_H^2>\delta\right)\\
		\leq& \frac{1}{\delta}\left(\mathbb{E}\left[\|u_{\tau}^n - u_\tau\|_H^2\right]+\|g_n-g\|_{C_b(\mathbb{R},H)}^2+\|h_n-h\|_{C_b(\mathbb{R},H)}^2\right) 4e^{\left(2\mathbf{k}_{10}+\mathbf{k}_{11}e^{2\mathbf{k}_{10}(t-\tau)}\right)(t-\tau)}+\frac{\mathscr{M}}{k}, \notag
\end{align}
it is evident that both $\mathbf{k}_{10}$ and $\mathbf{k}_{11}$ are independent of $n$, $\tau$ and $t$.  

Thanks to $\mathbb{E}\left[\|u_{\tau}^n\|_H^4\right]\leq R$, the sequence $\{ u_{\tau}^n \}_{n=1}^\infty$ is uniformly integrable in $L^2(\Omega, H)$. Since
$u_{\tau}^n \to u_\tau$ $\mathbb{P}$-almost surely, it follows from Vitali’s theorem that $u_{\tau}^n \to u_\tau$ in   $L^2(\Omega, H)$. Moreover, by assumption we know that $(g_n,h_n)\rightarrow (g,h)$ in $\Sigma$.  Therefore, passing first to the limit as $n\rightarrow \infty$ and then as $k\rightarrow +\infty$ in \eqref{5.14}, we conclude for any $\tau \in \mathbb{R}$, $t>\tau$ and $s_1\in (\tau, t)$,
$$
\lim_{n\rightarrow +\infty}\mathbb{P}\left(\|\varpi^n (s_1)\|_H^2>\delta\right)=0,
$$
which implies that $u^{(g_n,h_n)}(s_1,\tau, u_{\tau}^n)$ converges in distribution to $u^{(g,h)}(s_1,\tau, u_\tau)$, we obtain the desired result. This completes the proof.
\end{proof}

It follows from Lemma \ref{NSE-disghjsss5.1} that the process $U^{(g,h)}(t, \tau)$ defined in \eqref{NSE-disghj5.2} is jointly continuous on bounded subsets of $\mathcal{P}_4(H) \times \Sigma$. We now proceed to demonstrate the existence of a uniform absorbing set for $U^{(g,h)}(t, \tau)$.
\begin{lemma}\label{NSE-disghlem5.1}
	Suppose that  $(H_1)-(H_3)$, \eqref{NSE-dis-def4.1} and \eqref{NSE-dis-def4.1*} hold. Denote by
	\begin{align}\label{absK}
	\mathcal{K}=\mathbb{B}_{\mathcal{P}_4(H)}\left(\sqrt[4]{\mathcal{M}_3}\right),	
	\end{align}
	 where $\mathcal{M}_3$ is from Lemma \ref{NSE-dis-lemma4.3}. Then
	\begin{center}
		$\mathcal{K}$ is a closed uniform absorbing set for $\{U^{(g,h)}(t, \tau)\}_{(g,h)\in \Sigma}$.
	\end{center}
	
\end{lemma}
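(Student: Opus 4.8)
The plan is to deduce the statement directly from the uniform fourth-moment estimate of Lemma \ref{NSE-dis-lemma4.3}, so that essentially no new computation is required; the whole argument is a bookkeeping of which constants are uniform in the symbol space.

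First I would record that $\mathcal{K}$ is closed and nonempty. By the discussion in Section \ref{NS-Sec2}, every ball $\mathbb{B}_{\mathcal{P}_4(H)}(r)$ is a closed subset of $\mathcal{P}(H)$ for the metric $d_{\mathcal{P}(H)}$, hence $\mathcal{K}=\mathbb{B}_{\mathcal{P}_4(H)}\left(\sqrt[4]{\mathcal{M}_3}\right)$ is closed in $(\mathcal{P}_4(H),d_{\mathcal{P}(H)})$; since $\mathcal{M}_3>0$ it contains $\delta_0$, so it is a genuine nonempty closed set.

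Next I would verify the absorbing property. Given an arbitrary bounded set $\mathfrak{D}\subseteq\mathcal{P}_4(H)$, I would pick $r>0$ with $\mathfrak{D}\subseteq\mathbb{B}_{\mathcal{P}_4(H)}(r)$, set $R:=r^4$ so that $\int_H\|x\|_H^4\,\mu(dx)\leq R$ for all $\mu\in\mathfrak{D}$, and let $T=T(R)$ be the time provided by Lemma \ref{NSE-dis-lemma4.3}. For any $(g,h)\in\Sigma$, any $t\geq T$ and any $\mu\in\mathfrak{D}$, I would represent $\mu=\mathscr{L}_{u_0}$ with $u_0\in L^4(\Omega,\mathscr{F}_0;H)$ (possible because $\mu\in\mathcal{P}_4(H)$, and by uniqueness in law of the solution the measure $U^{(g,h)}(t,0)\mu$ is independent of the chosen representative), note that $\mathbb{E}[\|u_0\|_H^4]=\int_H\|x\|_H^4\,\mu(dx)\leq R$, and use \eqref{NSE-disghj5.1}--\eqref{NSE-disghj5.2} to write $U^{(g,h)}(t,0)\mu=\mathscr{L}_{u^{(g,h)}(t,0,u_0)}$. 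Applying Lemma \ref{NSE-dis-lemma4.3} with $\tau=0$ (so that $t-\tau=t\geq T$) and $\varepsilon\in(0,1]$ then gives
\[
\int_H\|x\|_H^4\,\big(U^{(g,h)}(t,0)\mu\big)(dx)=\mathbb{E}\big[\|u^{(g,h)}(t,0,u_0)\|_H^4\big]\leq\mathcal{M}_3,
\]
i.e. $U^{(g,h)}(t,0)\mu\in\mathcal{K}$. Since $\mu\in\mathfrak{D}$, $(g,h)\in\Sigma$ and $t\geq T$ were arbitrary, this yields $U^{(g,h)}(t,0)\mathfrak{D}\subseteq\mathcal{K}$ for all $(g,h)\in\Sigma$ and $t\geq T$, with $T=T(R)$ depending only on $\mathfrak{D}$ (through $R$) and not on $(g,h)$ or $\varepsilon$, which is exactly the definition of a uniform absorbing set.

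The step that truly carries the argument — and the only one I regard as a genuine obstacle — has already been done in Lemma \ref{NSE-dis-lemma4.3}: namely, that the fourth-moment bound $\mathcal{M}_3$ and the entry time $T=T(R)$ are uniform with respect to the symbol $(g,h)\in\Sigma$ and the noise intensity $\varepsilon\in(0,1]$. Granting that, the single ball $\mathcal{K}$ absorbs every bounded set simultaneously for all symbols, and nothing beyond Lemma \ref{NSE-dis-lemma4.3} and the closedness of balls in $\mathcal{P}_4(H)$ is needed.
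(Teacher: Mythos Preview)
Your proposal is correct and follows essentially the same approach as the paper's own proof: both invoke Lemma~\ref{NSE-dis-lemma4.3} to obtain the uniform fourth-moment bound yielding $U^{(g,h)}(t,0)\mathbb{B}_{\mathcal{P}_4(H)}(R)\subseteq\mathcal{K}$ for all $(g,h)\in\Sigma$ and $t\geq T(R)$, and then note that $\mathcal{K}$ is closed. Your write-up is simply more detailed in spelling out the representation $\mu=\mathscr{L}_{u_0}$ and the role of the uniformity in $(g,h)$, but no new idea is involved.
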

\begin{proof}
	By \eqref{absK} and Lemma \ref{NSE-dis-lemma4.3}, we find that for each ${R}>0$, there exists $T=T(R)>0$ such that for any $t\geq T$ and $(g,h)\in \Sigma$, 
	$$
	U^{(g,h)}(t, 0)\mathbb{B}_{\mathcal{P}_4(H)}(R)\subseteq \mathcal{K}.
	$$
 Moreover, it is obvious that $\mathcal{K}$ is a closed subset of $\mathcal{P}_4(H)$. Therefore, we have that
	$\mathcal{K}$ is a closed
	uniform absorbing set
	for $\{U^{(g,h)}(t, \tau)\}_{(g,h)\in \Sigma}$.
	 This proof is finished.
\end{proof}

At last, we prove the existence and uniqueness of uniform measure attractors of problem \eqref{NSE-dis1.-01} in $(\mathcal{P}_4(H), d_{\mathcal{P} (H)})$.

\begin{theorem}
	Suppose that  $(H_1)-(H_3)$, \eqref{NSE-dis-def4.1} and \eqref{NSE-dis-def4.1*} hold. Then the family of processes $\{U^{(g,h)}(t, \tau)\}_{(g,h)\in \Sigma}$ associsted with \eqref{NSE-disghj5.2} possesses a unique uniform measure attractor $\mathscr{A}$ in $\mathcal{P}_4(H)$. This attractor is explicitly characterized by
	\[
	\mathscr{A} = \bigcup_{(g,h) \in \Sigma} \mathcal{K}_{(g,h)}(0),
	\]
	where \( \mathcal{K}_{(g,h)} \) denotes the kernel section of the process corresponding to the symbol \((g,h)\).
\end{theorem}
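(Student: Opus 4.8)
The plan is to verify the hypotheses of Theorem \ref{theo-2}, namely that the family $\{U^{(g,h)}(t,\tau)\}_{(g,h)\in\Sigma}$ is jointly continuous, uniformly asymptotically compact, and admits a uniform closed absorbing set; the structural formula for $\mathscr{A}$ then follows directly from that theorem. The absorbing set is already furnished by Lemma \ref{NSE-disghlem5.1}, which identifies $\mathcal{K}=\mathbb{B}_{\mathcal{P}_4(H)}(\sqrt[4]{\mathcal{M}_3})$ as a closed uniform absorbing set. For joint continuity, I would invoke Lemma \ref{NSE-disghjsss5.1}: it shows that whenever $\mathscr{L}_{u_\tau^n}\to\mathscr{L}_{u_\tau}$ weakly with uniformly bounded fourth moments and $(g_n,h_n)\to(g,h)$ in $\Sigma$, one has $\mathscr{L}_{u^{(g_n,h_n)}(t,\tau,u_\tau^n)}\to\mathscr{L}_{u^{(g,h)}(t,\tau,u_\tau)}$ weakly, i.e. $U^{(g_n,h_n)}(t,\tau)\mu_n\to U^{(g,h)}(t,\tau)\mu$ in $d_{\mathcal{P}(H)}$ on each ball $\mathbb{B}_{\mathcal{P}_4(H)}(r)$; combined with the semigroup property and the translation identity established just before this theorem, this gives the joint continuity of the lifted semigroup $\{S(t)\}_{t\ge 0}$ on $\mathbb{B}_{\mathcal{P}_4(H)}(r)\times\Sigma$ for every $r>0$, which is the form of continuity required here (continuity is only ever tested along sequences lying in a bounded set, since trajectories enter $\mathcal{K}$).

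The crux is the uniform asymptotic compactness. Given $t_n\to+\infty$ and $(\mu_n,(g_n,h_n))$ bounded in $\mathcal{P}_4(H)\times\Sigma$, I want to show $\{U^{(g_n,h_n)}(t_n,0)\mu_n\}$ has a convergent subsequence in $(\mathcal{P}_4(H),d_{\mathcal{P}(X)})$. First, by compactness of $\Sigma$, pass to a subsequence so that $(g_n,h_n)\to(g,h)$ in $\Sigma$. Write $\mu_n=U^{(g_n,h_n)}(t_n-1,0)\mu_n$ composed with $U^{(g_n,h_n)}(t_n,t_n-1)$; equivalently, using the translation identity, realize each $U^{(g_n,h_n)}(t_n,0)\mu_n$ as the law at time $t_n$ of a solution starting from $\mu_n$ at time $0$. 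The key estimate is Lemma \ref{NSE-dis-lemma4.5}: for $t_n$ large it controls $\mathbb{E}[\mathscr{G}(t_n,0,u_0)\|u(t_n)\|_V^2]$ uniformly in $\varepsilon$, $(g,h)$, and the initial datum (within the moment bound). Since $\mathscr{G}\le 1$ but may degenerate, one converts this weighted bound into tightness by a standard splitting-of-sample-space argument: the event $\{\mathscr{G}(t_n,0,u_0)\ge\eta\}$ carries most of the mass (its complement has small probability because $\int_0^{t_n}\|u\|_H^2\|u\|_V^2ds$ has bounded expectation, by Lemma \ref{NSE-dis-lemma4.3}), and on that event $\|u(t_n)\|_V^2$ has expectation at most $\mathcal{M}_4/\eta$; since $V\hookrightarrow H$ compactly, this yields tightness of $\{\mathscr{L}_{u^{(g_n,h_n)}(t_n,0,u_0^n)}\}$ in $\mathcal{P}(H)$. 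Prokhorov gives a weakly convergent subsequence; uniform integrability of $\|u(t_n)\|_H^4$ (from Lemma \ref{NSE-dis-lemma4.3}) upgrades weak convergence to convergence in $\mathcal{P}_4(H)$ and in the metric $d_{\mathcal{P}(X)}$, completing the asymptotic compactness.

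With all three ingredients in place, Theorem \ref{theo-2} applies and produces the uniform measure attractor $\mathscr{A}=\bigcup_{(g,h)\in\Sigma}\mathcal{K}_{(g,h)}(0)$, with uniqueness being automatic once existence is established. I expect the main obstacle to be the passage from the \emph{weighted} regularity bound of Lemma \ref{NSE-dis-lemma4.5} to genuine tightness: one must argue carefully that the random weight $\mathscr{G}$ cannot conspire to be small precisely where $\|u(t_n)\|_V^2$ is large, and the clean way to do this is the Chebyshev/splitting estimate sketched above using the \emph{a priori} bound on $\mathbb{E}\int_0^{t_n}\|u\|_H^2\|u\|_V^2\,ds$ from Lemma \ref{NSE-dis-lemma4.3}, which is exactly the quantity appearing in the exponent defining $\mathscr{G}$. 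A secondary technical point is that $(\mathcal{P}_4(H),d_{\mathcal{P}(X)})$ is not complete, so all limits must be taken inside a fixed closed ball $\mathbb{B}_{\mathcal{P}_4(H)}(r)$, which is legitimate here because the absorbing set $\mathcal{K}$ confines every relevant trajectory.
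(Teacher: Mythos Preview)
Your overall strategy matches the paper's: verify the three hypotheses of Theorem~\ref{theo-2}, with the only substantial work being the tightness argument via Lemma~\ref{NSE-dis-lemma4.5} combined with a Chebyshev splitting using Lemma~\ref{NSE-dis-lemma4.3}. The paper's splitting is the same in spirit (it decomposes $\{\|u(t_n)\|_V^2>\mathscr{R}\}$ into $\{\mathscr{G}\|u\|_V^2>\sqrt{\mathscr{R}}\}\cup\{\mathscr{G}^{-1}>\sqrt{\mathscr{R}}\}$ and applies Markov to each).

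There is, however, a genuine gap in your compactness argument as written. You claim that $\int_0^{t_n}\|u(s)\|_H^2\|u(s)\|_V^2\,ds$ has bounded expectation by Lemma~\ref{NSE-dis-lemma4.3}, but that lemma only controls the \emph{exponentially weighted} integral $\int_\tau^t e^{-\gamma(t-s)}\mathbb{E}[\|u(s)\|_H^2\|u(s)\|_V^2]\,ds$; the unweighted integral over $[0,t_n]$ is not uniformly bounded as $t_n\to\infty$, so your Markov estimate on $\mathbb{P}(\mathscr{G}(t_n,0,u_0)<\eta)$ fails. The paper avoids this by restarting at an intermediate time: using the cocycle property $u^{(g_n,h_n)}(t_n,0,\xi_n)=u^{(g_n,h_n)}(t_n,t_n-2,u^{(g_n,h_n)}(t_n-2,0,\xi_n))$, it applies Lemmas~\ref{NSE-dis-lemma4.3} and~\ref{NSE-dis-lemma4.5} on the interval $[t_n-2,t_n]$, where the integral \emph{is} uniformly bounded (by $e^{2\gamma}\mathcal{M}_3$). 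You hinted at this restart (``compose with $U^{(g_n,h_n)}(t_n,t_n-1)$'') but then did not use it in the tightness estimate; doing so fixes the gap. A minor secondary point: your ``uniform integrability of $\|u(t_n)\|_H^4$'' for showing $\hat\mu\in\mathcal{P}_4(H)$ is not quite available from a bounded fourth moment alone; the paper instead observes that $\mathscr{L}_{u^{(g_n,h_n)}(t_n,0,\xi_n)}\in\mathcal{K}$ for large $n$ and that $\mathcal{K}=\mathbb{B}_{\mathcal{P}_4(H)}(\sqrt[4]{\mathcal{M}_3})$ is weakly closed, which is the cleaner route.
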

\begin{proof}
	First, the translation identity is established for the family of processes $\{U^{(g,h)}(t, \tau)\}_{(g,h)\in \Sigma}$. Moreover, Lemma \ref{NSE-disghjsss5.1} proves the joint continuity of this family on \( \mathcal{P}_4(H) \), and Lemma \ref{NSE-disghlem5.1} shows the existence of a closed uniform absorbing set $\mathcal{K}$ in 
	\( \mathcal{P}_4(H) \).
	 According to Theorem \ref{theo-2}, these properties imply that to conclude the existence of a uniform attractor, it remains to demonstrate that ${U^{(g,h)}(t, \tau)}_{(g,h)\in \Sigma}$ is uniformly asymptotically compact in $(\mathcal{P}_4(H), d_{\mathcal{P}(H)})$; i.e., the sequence $\{U^{(g_n,h_n)}(t_n,0)\mu_n\}$ admits a convergent subsequence in $\mathcal{P}_4(H)$ whenever $t_n \to \infty$ and $(\mu_n,(g_n,h_n))$ is bounded in \( \mathcal{P}_4(H) \times \Sigma\).

	Given $\xi_n\in L^4(\Omega,\mathscr{F}_{0}; H)$ with $\mathscr{L}_{\xi_n}=\mu_n$, we denote by $u^{(g_n,h_n)}(t_n,0,\xi_n)$ the solution of problem \eqref{NSE-dis1.-01} with initial data $\xi_n$ at initial time $0$, it is enough to prove that the sequence of distributions $\{\mathscr{L}_{u^{(g_n,h_n)}(t_n,0,\xi_n)}\}_{n=1}^\infty$ is tight in $H$. 
	From Lemma \ref{NSE-dis-lemma4.5} we see that there exists $N_1=N_1(R)\in \mathbb{N}$ such that for all $n\geq N_1$,  
	\begin{align}\label{NSE-disghjmea5.8}
		\mathbb{E}\left[\mathscr{G}(t_n,t_n-2,u^{(g_n,h_n)}(t_n-2,0,\xi_n))\| u^{(g_n,h_n)}(t_n,t_n-2,u^{(g_n,h_n)}(t_n-2,0,\xi_n))\|_V^2\right]\leq \mathcal{C}_1,
	\end{align}
	where $\mathcal{C}_1$ is a positive constant depending on $g_0,h_0$, but not on $\tau, \xi_n$ and $(g,h)\in \Sigma$.
	 
	Additionally, since $\gamma\in (0,\frac{1}{2})$ is small enough,  it follows from Lemma \ref{NSE-dis-lemma4.3} that there exist $N_2=N_2(R)\in \mathbb{N}$ and a constant $\mathcal{C}_2>0$ independent of $\tau, \xi_n$ and $(g,h)\in \Sigma$, such that for all $n\geq N_2$,
	{\small
	\begin{align}\label{NSE-disghjmea5.9}
		&\int_{t_n-2}^{t_n} \mathbb{E}\left[\|u^{(g_n,h_n)}(s,t_n-2,u^{(g_n,h_n)}(t_n-2,0,\xi_n))\|_H^2\|u^{(g_n,h_n)}(s,t_n-2,u^{(g_n,h_n)}(t_n-2,0,\xi_n))\|_V^2\right]ds \nonumber\\
		\leq& e^{2}\int_{t_n-2}^{t_n} e^{-\gamma(t_n-s)}\mathbb{E}\left[\|u^{(g_n,h_n)}(s,t_n-2,u^{(g_n,h_n)}(t_n-2,0,\xi_n))\|_H^2\|u^{(g_n,h_n)}(s,t_n-2,u^{(g_n,h_n)}(t_n-2,0,\xi_n))\|_V^2\right]ds\nonumber\\
		\leq& \mathcal{C}_2.
	\end{align}
}
	By \eqref{NSE-disghjmea5.8}, \eqref{NSE-disghjmea5.9} and Markov's inequality we derive that there exists $N_3=\max\{N_1,N_2\}$ such that for  all $n\geq N_3$ and $\mathscr{R}>1$, 
	{\small
	\begin{align*}
			&\mathbb{P}\left(\|u^{(g_n,h_n)}(t_n,0,\xi_n)\|_V^2>\mathscr{R}\right)\\
			&\leq  \mathbb{P}\left(\mathscr{G}(t_n,t_n-2,u^{(g_n,h_n)}(t_n-2,0,\xi_n))\|u^{(g_n,h_n)}(t_n,t_n-2,u^{(g_n,h_n)}(t_n-2,0,\xi_n))\|_V^2>\mathscr{R}^{1/2}\right)\\
			&+\mathbb{P}\left(\mathscr{G}^{-1}(t_n,t_n-2,u^{(g_n,h_n)}(t_n-2,0,\xi_n))>\mathscr{R}^{1/2}\right)\\
			&\leq \mathbb{P}\left(\mathscr{G}(t_n,t_n-2,u^{(g_n,h_n)}(t_n-2,0,\xi_n))\|u^{(g_n,h_n)}(t_n,t_n-2,u^{(g_n,h_n)}(t_n-2,0,\xi_n))\|_V^2>\mathscr{R}^{1/2}\right)\\
			&+\mathbb{P}\left(\int^{t_n}_{t_n-2}\|u^{(g_n,h_n)}(s,t_n-2,u^{(g_n,h_n)}(t_n-2,0,\xi_n))\|_H^2\|u^{(g_n,h_n)}(s,t_n-2,u^{(g_n,h_n)}(t_n-2,0,\xi_n))\|_V^2ds>\frac{2\nu^3}{27\mathfrak{C}^4}\ln{\mathscr{R}}\right)\\
			&\leq \frac{\mathbb{E}\left[\mathscr{G}(t_n,t_n-2,u^{(g_n,h_n)}(t_n-2,0,\xi_n))\|u^{(g_n,h_n)}(t_n,t_n-2,u^{(g_n,h_n)}(t_n-2,0,\xi_n))\|_V^2\right]}{\mathscr{R}^{1/2}}\\
			& +\frac{27\mathfrak{C}^4}{2\nu^3\ln{\mathscr{R}}}\int^{t_n}_{t_n-2}\mathbb{E}\left[\|u^{(g_n,h_n)}(s,t_n-2,u^{(g_n,h_n)}(t_n-2,0,\xi_n))\|_H^2\|u^{(g_n,h_n)}(s,t_n-2,u^{(g_n,h_n)}(t_n-2,0,\xi_n))\|_V^2\right]ds\\
			&\leq \frac{\mathcal{C}_1}{\mathscr{R}^{1/2}}+\frac{27\mathfrak{C}^4\mathcal{C}_2}{2\nu^3\ln{\mathscr{R}}}\rightarrow 0, \quad \text{as}\quad \mathscr{R}\rightarrow \infty.
	\end{align*}
}

	Therefore, for any $\delta>0$, there exists $\hat{\mathscr{R}}=\hat{\mathscr{R}}(\delta)>0$ such that for all $\xi_n\in L^4(\Omega,\mathscr{F}_{0}, H)$ with $\mu_n\in \mathcal{K}$ and for any $n\geq N_3$,
	\begin{align*}
		\mathbb{P}\left(\|u^{(g_n,h_n)}(t_n,0,\xi_n)\|^2_V>\hat{\mathscr{R}}\right)<\delta,
	\end{align*}
	which, together with the compact embedding $V\hookrightarrow H$, shows that the sequence  $\{\mathscr{L}_{u^{(g_n,h_n)}(t_n,0,\xi_n)}\}_{n=1}^\infty$ is tight in $H$. This further implies that there exist a probability measure $\hat{\mu}\in \mathcal{P}(H)$ and a subsequence of $\{\mathscr{L}_{u^{(g_n,h_n)}(t_n,0,\xi_n)}\}_{n=1}^\infty$ (not relabel) such that
	\begin{align}\label{NSE-disghjmea5.11}
		\mathscr{L}_{u^{(g_n,h_n)}(t_n,0,\xi_n)} \rightarrow \hat{\mu} \text{~weakly}.
	\end{align}  
	Finally, we prove that $\hat{\mu}\in \mathcal{P}_4(H)$. Let $\mathcal{K}$ be the closed uniform absorbing set of $\{U^{(g,h)}(t, \tau)\}_{(g,h)\in \Sigma}$ given by \eqref{absK}. Then, there exists $N_4\in \mathbb{N}$ such that for all $n\geq N_4$,
	\begin{align}\label{NSE-disghjmea5.12}
		\mathscr{L}_{u^{(g_n,h_n)}(t_n,0,\xi_n)}\in \mathcal{K}.
	\end{align}
	Observe that $\mathcal{K}$ is closed with respect to the weak topology of $\mathcal{P}_4(H)$, then by \eqref{NSE-disghjmea5.11} and \eqref{NSE-disghjmea5.12} we obtain $\hat{\mu}\in\mathcal{K}$, from which we have $\hat{\mu}\in \mathcal{P}_4(H)$.  This proof is finished.
\end{proof}

\section*{Declarations}

\subsection*{Conflict of Interest}
The authors declare no conflict of interest.

\subsection*{Data Availability}
No new data were created or analysed in the article.

\end{document}